\numberwithin{equation}{section}
\theoremstyle{plain}
\newtheorem{theorem}{Theorem}[section]
\newtheorem{lemma}[theorem]{Lemma}
\newtheorem{proposition}[theorem]{Proposition}
\newtheorem{corollary}[theorem]{Corollary}
\theoremstyle{definition}
\newtheorem{definition}[theorem]{Definition}
\newtheorem{example}[theorem]{Example}
\newtheorem{remark}[theorem]{Remark}
\newtheorem*{remark*}{Remark}
\newtheorem{question}[theorem]{Question}
\let\c@equation\c@theorem  
\newcommand{\rank}{\operatorname{rank}}
\DeclareMathOperator{\Spec}{Spec}
\DeclareMathOperator{\Ext}{Ext}
\DeclareMathOperator{\gr}{gr}
\DeclareMathOperator{\GKdim}{GKdim}
\begin{document}

\title{Lower bounds of Growth of Hopf algebras}

\author{D.-G. Wang, J.J. Zhang and G. Zhuang}

\address{Wang: School of Mathematical Sciences,
Qufu Normal University, Qufu, Shandong 273165, P.R.China}

\email{dgwang@mail.qfnu.edu.cn, dingguo95@126.com}

\address{Zhang: Department of Mathematics, Box 354350,
University of Washington, Seattle, Washington 98195, USA}

\email{zhang@math.washington.edu}

\address{Zhuang: Department of Mathematics, Box 354350,
University of Washington, Seattle, Washington 98195, USA}

\email{gzhuang@math.washington.edu}

\begin{abstract}
Some lower bounds of GK-dimension of Hopf algebras are given.
\end{abstract}

\subjclass[2000]{Primary 16P90, 16W30; Secondary 16A24, 16A55}


\keywords{Hopf algebra, Gelfand-Kirillov dimension, skew primitive,
pointed}


\maketitle


\setcounter{section}{-1}
\section{Introduction}
\label{xxsec0}
A seminal result of Gromov states that a finitely generated group
has polynomial growth, or equivalently, the associated group algebra
has finite Gelfand-Kirillov dimension, if and only if it has a
nilpotent subgroup of finite index \cite{Gr}. Group algebras form a
special class of cocommutative Hopf algebras. It is natural to ask

\begin{question}
\label{xxque0.1}
What are necessary and sufficient conditions on a finitely
generated Hopf algebra $H$ such that its Gelfand-Kirillov dimension
is finite?
\end{question}

Let $k$ be a base field and everything be over $k$. Assume that, for
simplicity, $k$ is algebraically closed of characteristic zero. It
is clear that an affine (i.e., finitely generated) commutative Hopf
algebra has a finite GK-dimension (short for Gelfand-Kirillov
dimension) which equals its Krull dimension. If $H$ is
cocommutative, by a classification result \cite[Corollary 5.6.4 and
Theorem 5.6.5]{Mo}, it is isomorphic to a smash product
$U({\mathfrak g}) \# kG$ for some group $G$ and some Lie algebra
${\mathfrak g}$. Consequently,
\begin{equation}
\label{I0.1.1}\tag{I0.1.1}
\GKdim H=\GKdim kG +\dim {\mathfrak g},
\end{equation}
which solves Question \ref{xxque0.1} in terms of conditions on $G$
and ${\mathfrak g}$. Question \ref{xxque0.1} is also answered for
several classes of noncommutative and noncocommutative Hopf algebras,
including quantum groups $U_q({\mathfrak g})$ and ${\mathcal O}_q(G)$,
see \cite{BG2, GZ1}. The present paper attempts to study Question
\ref{xxque0.1} for a larger class of noncommutative and noncocommutative
Hopf algebras by providing three lower bounds of GK-dimension in terms
of certain invariants of skew primitive elements.

Let $H$ be a Hopf algebra over $k$. A nonzero element $y\in H$ is called
{\it $(1,g)$-primitive} (or generally skew primitive) if $\Delta(y)=
y\otimes 1+g\otimes y$ and such a $g$ is called the {\it weight} of $y$
and denoted by $\mu(y)$. Let $G(H)$ denote the group of group-like
elements in $H$ and let $C_0=kG(H)$. Here is the first lower bound theorem.

\begin{theorem}[First lower bound theorem]
\label{xxthm0.2}
Let $D\supseteq C_0$ be a Hopf subalgebra of $H$.
Let $\{y_i\}_{i=1}^w$ be a set of skew primitive elements such that
\begin{enumerate}
\item
$\{y_i\}_{i=1}^w$ is linearly independent in $H/D$.
\item
for all $i\leq j$, $y_i \mu(y_j)=\lambda_{ij}\mu(y_j)y_i$ for some
$\lambda_{ij}\in k^{\times}$,
\item
for each $i$, $\lambda_{ii}$ is either 1 or not a root of unity.
\end{enumerate}
Then $\GKdim H\geq \GKdim D+w$.
\end{theorem}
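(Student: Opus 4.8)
\emph{Reduction to a freeness statement.} The plan is first to reduce the inequality to a linear–independence assertion and then to prove that assertion using the coalgebra structure. Write $g_i=\mu(y_i)\in G(H)\subseteq C_0\subseteq D$ and, for $a=(a_1,\dots,a_w)\in\NN^w$, set $m_a=y_1^{a_1}\cdots y_w^{a_w}$ and $|a|=\sum_i a_i$. I claim it suffices to show that the ordered monomials $\{m_a\}_{a\in\NN^w}$ are left linearly independent over $D$, i.e. that $\sum_a x_a m_a=0$ with $x_a\in D$ forces every $x_a=0$. Granting this, let $B=\langle D,y_1,\dots,y_w\rangle\subseteq H$, fix a finite-dimensional generating subspace $W\ni 1$ of $D$ realizing $\GKdim D$, and put $V=W+\sum_i k y_i$. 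Since right multiplication by $m_a$ is injective on $D$ and the subspaces $W^n m_a$ are independent (both by the freeness), one gets $\dim V^{2n}\ge \dim(W^n)\cdot\#\{a:|a|\le n\}\ge \dim(W^n)\cdot\binom{n+w}{w}$; as $\binom{n+w}{w}\sim n^w/w!$ this yields $\GKdim H\ge\GKdim B\ge\GKdim D+w$ (the case $\GKdim D=\infty$ being trivial).

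\emph{The extraction map.} To prove the freeness I use the iterated coproduct $\Delta^{(d-1)}\colon H\to H^{\otimes d}$ together with the canonical projection $\pi\colon H\to H/D$, forming $\Psi_d=\pi^{\otimes d}\circ\Delta^{(d-1)}\colon H\to (H/D)^{\otimes d}$. Because $D$ is a Hopf subalgebra, $\Delta(D)\subseteq D\otimes D$, so $H/D$ is a $D$–bimodule and $\pi$ kills every product of elements of $D$; in particular $\pi(g_i)=\pi(1)=0$. Since $y_i$ is $(1,g_i)$–primitive, each term of $\Delta^{(d-1)}(m_a)$ carries exactly $|a|$ ``$y$–letters'' distributed among the $d$ tensor slots, every other factor being a weight $g_i$ or $1$. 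Hence if $|a|<d$ some slot receives only elements of $D$ and the whole term dies under $\pi^{\otimes d}$; the same pigeonhole survives multiplication by $\Delta^{(d-1)}(x_a)\in D^{\otimes d}$. Consequently $\Psi_d$ \emph{annihilates all $m_a$ with $|a|<d$}, so applying $\Psi_d$ to a relation $\sum_{|a|\le d}x_a m_a=0$ of top degree $d$ leaves only the top part $\sum_{|a|=d}\Psi_d(x_a m_a)=0$.

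\emph{Analysing the top part.} For $|a|=d$ the surviving ``fully split'' terms are those in which each of the $d$ slots receives exactly one $y$–letter. Writing $m_a$ in its ordered form, the indices of the letters are non-decreasing, so every weight $g_j$ that must pass a letter $y_i$ in a given slot has $i\le j$; condition (2) then lets me move all weights to one side at the cost of scalars $\lambda_{ij}$, normalizing each slot to $\gamma_s\,\overline{y_{i}}$ with $\gamma_s\in D$ acting on the left of $H/D$. The numerical coefficients produced in this process are products of the $q$–integers $(n)_{\lambda_{ii}}=1+\lambda_{ii}+\cdots+\lambda_{ii}^{n-1}$; condition (3) together with $\ch k=0$ guarantees that $(n)_{\lambda_{ii}}\neq0$ for all $n\ge1$, so none of the leading coefficients degenerates. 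Finally, distinct $a$ with $|a|=d$ produce tensors whose slotwise $\overline{y_i}$–patterns realise different multisets of indices, and condition (1) (independence of the $\overline{y_i}$ in $H/D$) separates these patterns; comparing the corresponding components forces every $x_a=0$, contradicting the choice of a nontrivial top-degree relation. Induction on $d$ then gives the freeness.

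\emph{Main obstacle.} The delicate point is the very last comparison: after applying $\Psi_d$ the surviving relation is one among \emph{left $D$–multiples} of the $\overline{y_i}$, whereas hypothesis (1) only provides $k$–linear independence. Upgrading (1) to the $D$–module separation actually needed—so as to read off $x_a=0$ slot by slot—is where the real work lies; I expect to handle it by retaining one un-projected tensor slot (working in $H\otimes(H/D)^{\otimes(d-1)}$) and using the residual right $H$–comodule structure $\overline{\rho}=(\id\otimes\pi)\Delta$ on $H/D$, for which $\overline{\rho}(\overline{y_i})=g_i\otimes\overline{y_i}$, to peel off the $D$–coefficients one slot at a time. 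Keeping track of the accumulated $q$–commutation scalars throughout, and verifying they never conspire to vanish (again by condition (3)), is the main technical burden of the argument.
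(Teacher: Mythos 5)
Your overall architecture is sound and parallels the paper's: reduce the inequality to left $D$-freeness of the ordered monomials $y_1^{a_1}\cdots y_w^{a_w}$, then run the dimension count $\dim V^{2n}\geq(\dim W^n)\cdot\#\{a:|a|\leq n\}$, exactly as in the proof of Theorem \ref{xxthm1.5}(b). The extraction map $\Psi_d=\pi^{\otimes d}\circ\Delta^{(d-1)}$ is a legitimate alternative device to the paper's single-coproduct-plus-multidegree-filtration analysis, and your pigeonhole observation that $\Psi_d$ kills monomials of total degree $<d$ is correct. However, there is a genuine gap, and it is precisely the one you flag as the ``main obstacle'': after applying $\Psi_d$, the surviving identity is a relation among tensors $c_1\overline{y_{i_1}}\otimes\cdots\otimes c_d\overline{y_{i_d}}$ with coefficients $c_s\in D$ coming from $\Delta^{(d-1)}(x_a)$ and the weights, so concluding $x_a=0$ requires independence of the $\overline{y_i}$ (indeed of their tensor monomials) over $D$, while hypothesis (a) supplies only $k$-linear independence in $H/D$. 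Already in degree one, your freeness claim asserts that $x_0+\sum_i x_iy_i=0$ with $x_i\in D$ forces all $x_i=0$, and nothing in your argument excludes a relation $x_1y_1+x_2y_2\in D$ with non-scalar $x_1,x_2\in D$. Your proposed repair---retaining one un-projected slot and peeling coefficients via $\overline{\rho}=(\id\otimes\pi)\Delta$---is a plan, not an argument; as written the proof is incomplete at its crux.

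This upgrade from $k$-independence to $D$-freeness is exactly where the paper does its real work, in Lemma \ref{xxlem1.1}: for a minimal offending relation $aF\in L_\alpha$, the set $V=\{a\in D\mid aF\in L_\alpha\}$ is shown to be a subcoalgebra of $D$ (by testing $\Delta(a)$ against a basis of the subcoalgebra generated by $a$, using regularity of the elements $g,h$) and then, via $1\cdot F=\epsilon(v)F=\sum S(v_1)v_2F$ and the left-ideal property, to equal all of $D$. This converts an arbitrary $D$-coefficient dependence into one with coefficient $1$, after which minimality of $\alpha$ makes $L_\alpha$ a free left $D$-module with monomial basis, so coefficients in $\Delta(F)$ can legitimately be compared; the quantum-binomial computation (Lemma \ref{xxlem1.3} together with \cite[Lemma 7.5]{GZ2}) then forces some $\lambda_z$ to be a primitive root of unity, contradicting hypothesis (c). Note, too, that Theorem \ref{xxthm1.5}(a) shows dependence \emph{does} occur at roots of unity (e.g.\ $y_z^{p_z}$ becoming skew primitive), so any correct proof must funnel the contradiction through the vanishing of the $q$-binomials $\binom{n}{s}_{\lambda_{ii}}$; your step ``comparing the corresponding components forces every $x_a=0$'' elides both that funneling and the $D$-versus-$k$ independence issue. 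Until you supply a substitute for Lemma \ref{xxlem1.1} (or carry out your comodule-peeling idea in detail), the proposal does not prove the theorem.
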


In general $\lambda_{ij}$ in condition (b) may not exist. If that is the
case, we have other ways of obtaining lower bounds.

Let $W$ denote the set of weights $\mu(y)$ for all skew primitive
elements $y\not\in C_0$ and let $W_{\sqrt{\;}}$ be the subset of $W$
consisting
of weights $\mu(y)$ for all $y$ such that $y^n$ is also a skew primitive
for some  $n>1$. (Note that in this paper the term ``skew primitive''
means ``$(1,g)$-primitive''). For any subset $\Phi\subset G(H)$,
the subgroup of $G(H)$ generated by $\Phi$ is denoted by $\langle
\Phi \rangle$. Here is the second lower bound theorem.

\begin{theorem}[Second lower bound theorem]
\label{xxthm0.3}
Suppose $\langle W\setminus W_{\sqrt{\;}} \rangle$ is abelian. Then
\begin{equation}
\label{I0.3.1}\tag{I0.3.1}
\GKdim H\geq \GKdim C_0+\#(W\setminus W_{\sqrt{\;}}).
\end{equation}
\end{theorem}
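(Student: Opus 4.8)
The plan is to deduce the statement from the First Lower Bound Theorem (Theorem~\ref{xxthm0.2}) applied with $D=C_0$. Write $w=\#(W\setminus W_{\sqrt{\;}})$ and $\Gamma=\langle W\setminus W_{\sqrt{\;}}\rangle$, which is abelian by hypothesis. It then suffices to produce skew primitive elements $\{y_g\}_{g\in W\setminus W_{\sqrt{\;}}}$, one of each weight $g$, that satisfy conditions (a)--(c) of Theorem~\ref{xxthm0.2}; the bound $\GKdim H\ge\GKdim C_0+w$ follows at once. (If $w=\infty$ one runs the argument on arbitrarily large finite subsets of $W\setminus W_{\sqrt{\;}}$ and lets their size tend to infinity.) Throughout I use the elementary fact, proved by comparing the two sides of $\Delta$, that the nonzero $(1,g)$-primitives lying in $C_0$ are exactly the scalar multiples of $g-1$; thus a \emph{nontrivial} skew primitive of weight $g$ means an element of $P_g\setminus k(g-1)$, where $P_g$ is the space of all $(1,g)$-primitives, and $P_g\setminus k(g-1)\ne\varnothing$ precisely because $g\in W$.

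First I would verify (a). Since each $P_g$ meets $C_0$ only in $k(g-1)$, and nontrivial skew primitives of pairwise distinct weights are linearly independent modulo $C_0$ (a standard comparison of coproducts, or a quotable earlier lemma), the elements $y_g$, having pairwise distinct weights, are automatically linearly independent in $H/C_0$, whatever representatives are chosen.

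The heart of the matter, and where I expect the main difficulty, is choosing the $y_g$ so as to secure (b). Because $\Gamma$ is abelian and every weight lies in $\Gamma$, conjugation by any $h\in\Gamma$ fixes each weight, $hgh^{-1}=g$, and hence preserves each $P_g$ while fixing the line $k(g-1)$. The commuting family $\{\mathrm{conj}_{h}\}_{h\in\Gamma}$ therefore acts on $P_g$, and over the algebraically closed field $k$ a commuting family of operators on a nonzero finite-dimensional space has a common eigenvector; applied on $P_g/k(g-1)\ne 0$ this produces a common eigenvector that I lift to a nontrivial $y_g\in P_g$, giving $g_j\,y_g\,g_j^{-1}=\lambda\,y_g$ modulo $k(g-1)$. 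Correcting $y_g$ by a scalar multiple of $g-1$ then yields the exact relation $y_g\,\mu(y_{g'})=\lambda_{gg'}\,\mu(y_{g'})\,y_g$ with $\lambda_{gg'}\in k^{\times}$, which is (b). Two genuine obstacles remain: producing a finite-dimensional (or at least locally finite) $\Gamma$-stable part of $P_g$ so that the common-eigenvector principle applies; and the eigenvalue-one case, in which the correction by $g-1$ fails exactly when $\mathrm{conj}_{g_j}$ carries a nontrivial unipotent (Jordan) block mixing $y_g$ with $g-1$. I would dispose of the latter by ruling out such a block (for instance, after reducing to group-likes of finite order, on which conjugation is semisimple, or by a direct analysis on $P_g$), or, if that resists, by checking that the proof of Theorem~\ref{xxthm0.2} already tolerates (b) holding only modulo $D=C_0$, the discrepancies being absorbed into lower-order terms of the growth estimate.

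Finally, (c) is where $W_{\sqrt{\;}}$ does its work. Writing $y_g\,g=\lambda_{gg}\,g\,y_g$, suppose for contradiction that $\lambda_{gg}$ is a primitive $n$th root of unity with $n>1$. The $\lambda_{gg}$-quantum binomial expansion gives $\Delta(y_g^{\,n})=\sum_{i=0}^{n}\binom{n}{i}_{\lambda_{gg}} g^{i}y_g^{\,n-i}\otimes y_g^{\,i}$, and every inner Gaussian coefficient $\binom{n}{i}_{\lambda_{gg}}$ with $0<i<n$ vanishes, so $\Delta(y_g^{\,n})=y_g^{\,n}\otimes 1+g^{n}\otimes y_g^{\,n}$. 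Thus $y_g^{\,n}$ is again skew primitive, whence $g=\mu(y_g)\in W_{\sqrt{\;}}$, contradicting $g\in W\setminus W_{\sqrt{\;}}$. Therefore $\lambda_{gg}$ is $1$ or not a root of unity, which is (c). With (a)--(c) in hand, Theorem~\ref{xxthm0.2} yields $\GKdim H\ge\GKdim C_0+w$, as claimed.
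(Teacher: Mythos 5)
Your outline has the right shape (one element per weight, secure conditions (a)--(c), invoke the first lower bound), and your steps (a) and (c) are essentially sound --- (c) is exactly the paper's Lemma~\ref{xxlem2.1}(b), and the distinct-weight independence in (a) is a weak form of Lemma~\ref{xxlem2.3}. But the two ``obstacles'' you flag in step (b) are not loose ends to be tidied; they are the actual content of the proof, and as written your argument fails at both. First, you have no source of finite-dimensionality: the common-eigenvector principle needs a nonzero finite-dimensional $\Gamma$-stable subspace of $P_g$, and $P_g$ itself can be infinite-dimensional even when $\GKdim H<\infty$ (Example~\ref{xxex3.13} has $\dim P_x=\infty$ with $\GKdim H=0$). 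The paper manufactures local finiteness in a specific order: Lemma~\ref{xxlem2.5} (Zhuang's theorem, which is where $\GKdim H<\infty$ enters) first produces from $y$ a skew primitive $z$ in the span of the $\langle g\rangle$-conjugates with $\gamma(z)$ defined; Lemma~\ref{xxlem2.1}(b) then shows $\gamma(z)$ is $1$ or not a root of unity (since $g\notin W_{\sqrt{\;}}$); and only then does Lemma~\ref{xxlem2.2}(b) --- itself an application of Theorem~\ref{xxthm1.5}(b) --- bound the dimension of the span of \emph{all} $G_0$-conjugates. Without this chain your eigenvector step has nothing to act on.

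Second, the unipotent eigenvalue-$1$ case cannot be ruled out: relations $g^{-1}zg=z+\tau_g(g-1)$ with $\tau_g\neq 0$ occur in genuine Hopf algebras (this is precisely why the paper's construction $K(\Lambda,\mu)$ carries the homomorphisms $\tau_i$, and why Lemma~\ref{xxlem2.2}(c) has the two outcomes (ci)/(cii) rather than just (ci)), so your first fallback is hopeless. Your second fallback --- that the machinery tolerates (b) only modulo $C_0$ --- is the correct move, but you assert it rather than check it, and Theorem~\ref{xxthm0.2} as stated does \emph{not} tolerate it. The paper instead routes through Theorem~\ref{xxthm2.4} and invokes Theorem~\ref{xxthm1.5}(b) directly, whose hypothesis (I1.2.3) explicitly allows $y_ig_j=\lambda_{ij}g_jy_i+b_{ij}$ with $b_{ij}$ lying in a subalgebra $A\subseteq D$ satisfying $y_iA\subseteq Ay_i+A$ and $g_iA\subseteq Ag_i+A$; verifying this with $A=kG_0$ requires controlling the conjugation of each $z_i$ by the \emph{whole} abelian group $G_0$ simultaneously (Lemma~\ref{xxlem2.2}(c)), not a weight-by-weight eigenvector as in your sketch. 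So the proposal correctly locates the difficulties but leaves both unresolved, and the resolutions are not routine: they consume Lemmas~\ref{xxlem2.5}, \ref{xxlem2.1}, \ref{xxlem2.2} and the $b_{ij}$-tolerant form of the PBW-type Theorem~\ref{xxthm1.5}.
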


There are examples such that $W=W_{\sqrt{\;}}$ and $\GKdim H=
\GKdim C_0$, but $\#(W_{\sqrt{\;}})$ is arbitrarily large
[Example \ref{xxex2.7}]. Therefore $W_{\sqrt{\;}}$ has to be
removed from $W$ when we estimate the GK-dimension of $H$.

Let $y$ be a skew primitive element not in $C_0$. If
\begin{equation}
\label{I0.3.2}\tag{I0.3.2}
\mu(y)^{-1}y\mu(y)-c y\in C_0
\end{equation}
for some $c\in k^\times$, then $c$
is called the {\it commutator} of $y$ (with its weight) and denoted
by $\gamma(y)$. By Lemma \ref{xxlem1.6}, \eqref{I0.3.2} is
equivalent to
\begin{equation}
\label{I0.3.3}\tag{I0.3.3}\mu(y)^{-1}y\mu(y)-c y=\tau(\mu(y)-1)
\end{equation}
for some $\tau\in k$.
Define $\Gamma$ to be the set of $\gamma(y)$ for
all skew primitive elements $y\not\in C_0$ such that $\gamma(y)$
exists and let $\Gamma_{\sqrt{\;}}$ be the subset of $\Gamma$
consisting of those $\gamma(y)$ which are roots of unity but not 1.
If $\gamma(y)$ exists, the pair $(\mu(y),\gamma(y))$
is denoted by $\omega(y)$ and is called the {\it weight 
commutator} of $y$. When \eqref{I0.3.3} holds and if $c\neq 1$,
$y$ can be replaced by $z:=y+(c-1)^{-1}\tau(\mu(y)-1)$, which is a
skew primitive element with $\omega(z)=\omega(y)$ and satisfies
the equation $\mu(z)^{-1} z \mu(z)-\gamma(z) z=0$.

Define $\Omega$ to be the set of
$\omega(y)$ for all skew primitive elements $y\not\in C_0$ such
that $\omega(y)$ exists and
let $\Omega_{\sqrt{\;}}$ be the subset of $\Omega$ consisting of
those $\omega(y)$ in which $\gamma(y)$ is a root of unity but not 1.
Theorem \ref{xxthm0.3} can be improved a little under the same
hypothesis:
$$\GKdim H\geq \GKdim C_0+\#(\Omega\setminus \Omega_{\sqrt{\;}}).$$

Let $y$ be a skew primitive element not in $C_0$ with $g=\mu(y)$. 
Let $T_{g^{-1}}$
be the inverse conjugation by $g$, namely, $T_{g^{-1}}: a\to g^{-1}a g$.
A scalar $c$ is called a {\it commutator of $y$ of level $n$} if $n$ is
the least nonnegative integer such that
\begin{equation}
\label{I0.3.4}\tag{I0.3.4}
(T_{g^{-1}}-c Id_H)^n(y)\in C_0.
\end{equation}
In this case we also write $\gamma(y)=c$. Let $Z$ denote the space
spanned by the identity element $1$ and all skew primitive elements
of $H$ and let $Y_{\sqrt{\;}}$
denote the subspace of $Z$ spanned by those $y$ with commutator
of finite level and with $\gamma(y)$ being a root of unity but
not 1. Here is the third lower bound theorem. Let $W_{\times}$ 
be the subset of $W$ consisting of weights $\mu(y)$ such that 
the commutator of $y$ (as defined in \eqref{I0.3.4}) exists and is either
1 or not a root of unity. Note that $W\setminus W_{\sqrt{\;}}
\subseteq W_{\times}$ and these are often equal [Remark 
\ref{xxrem3.9}].

\begin{theorem}[Third lower bound theorem]
\label{xxthm0.4}
Suppose $\langle W_{\times}\rangle$ is abelian. Then
\begin{equation}
\label{I0.4.1}\tag{I0.4.1}
\GKdim H\geq \GKdim C_0+\dim Z/(C_0+Y_{\sqrt{\;}}).
\end{equation}
\end{theorem}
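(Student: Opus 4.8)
The plan is to feed a large system of skew primitive elements into the First Lower Bound Theorem (Theorem \ref{xxthm0.2}) with $D=C_0$. Since a lower bound on $\GKdim H$ only requires fast-growing finite-dimensional subspaces, it suffices to prove $\GKdim H\ge\GKdim C_0+w$ for every finite $w\le\dim Z/(C_0+Y_{\sqrt{\;}})$; so I fix $w$ classes that are linearly independent modulo $C_0+Y_{\sqrt{\;}}$, hence also modulo $C_0$, which will supply condition (a). Assume first that these classes are of finite level (the infinite-level case is addressed at the end). Because $Y_{\sqrt{\;}}$ is spanned by skew primitive elements of definite weight, it is homogeneous for the weight decomposition $Z/C_0=\bigoplus_g\bar P_g$, where $\bar P_g$ is the image of the $(1,g)$-primitives, so I may choose the $w$ representatives weight by weight. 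Each chosen class has commutator $1$ or a non-root-of-unity, so its weight lies in $W_\times$; thus every weight occurring lies in $\langle W_\times\rangle$, which is abelian by hypothesis, and the inverse conjugations $T_{h^{-1}}$ for $h\in\langle W_\times\rangle$ therefore commute and preserve each $\bar P_g$ (as $h^{-1}gh=g$ preserves the weight).

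On a finite-dimensional $\langle W_\times\rangle$-invariant subspace containing the chosen classes, the commuting family $\{T_{h^{-1}}\}$ can be simultaneously triangularized over the algebraically closed field $k$, with a flag refining both the weight grading and the image of $Y_{\sqrt{\;}}$ (inside weight $g$ the latter is exactly the sum of the generalized $T_{g^{-1}}$-eigenspaces for nontrivial roots of unity, and these are preserved by the whole family). Lifting the flag basis to $\bigcup_g P_g$ yields skew primitive elements $y_1,\dots,y_w$, ordered by the flag, whose images form a basis of the chosen quotient and whose self-eigenvalues $\lambda_{ii}$ (the $T_{g_i^{-1}}$-eigenvalue on $y_i$) are $1$ or not roots of unity, i.e.\ condition (c). Writing $g_j=\mu(y_j)$, the triangular form gives, for all $i\le j$,
\begin{equation}
\label{plantri}\tag{$\ast$}
g_j^{-1}y_ig_j-\lambda_{ij}y_i\in\operatorname{span}_k\{y_1,\dots,y_{i-1}\}+C_0,\qquad\lambda_{ij}\in\kx.
\end{equation}
For the level-$1$ elements with $\lambda_{ii}\ne1$ the span part of the diagonal instance of \eqref{plantri} is empty, so Lemma \ref{xxlem1.6} forces the remaining $C_0$-term to lie in $k(g_i-1)$; exactly as in the remark after \eqref{I0.3.3} I then add a multiple of $g_i-1$ to $y_i$ to make that diagonal relation exact.

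Were the span terms in \eqref{plantri} absent, the $y_i$ would satisfy conditions (a)--(c) and Theorem \ref{xxthm0.2} would finish the proof; the main obstacle is therefore the non-diagonalizable case, where $T_{g_i^{-1}}$ has a genuine Jordan block of size $\ge2$ (a commutator of level $\ge2$) with eigenvalue $1$ or a non-root-of-unity. Such a $y_i$ is not an exact eigenvector and cannot be normalized to one, so the hypotheses of Theorem \ref{xxthm0.2} genuinely fail and its conclusion must be recovered by hand. I expect to do this by a triangular refinement of the argument behind Theorem \ref{xxthm0.2}: introduce the filtration making $y_\ell$ ($\ell<i$) smaller than $y_i$, and track the coproduct $\Delta(y_i^{a})$ in $H\otimes H$, where the quantum-binomial leading term is nonzero precisely because $\lambda_{ii}$ is $1$ or not a root of unity (condition (c)), while the contribution of the span part of \eqref{plantri} is strictly lower order and the $C_0$-part is absorbed by working modulo $C_0$. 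This should keep the ordered monomials $y_w^{a_w}\cdots y_1^{a_1}$ linearly independent over a generating subspace of $C_0$, which is exactly the linear-independence input to the growth count proving Theorem \ref{xxthm0.2}, and hence yields $\GKdim H\ge\GKdim C_0+w$. Finally, an infinite-level class (whose $T_{g^{-1}}$-orbit is infinite-dimensional and whose weight need not lie in $W_\times$) must be treated separately, but there the profusion of linearly independent conjugate skew primitives of the same weight only increases the growth, consistent with the then-infinite right-hand side, so the inequality still holds.
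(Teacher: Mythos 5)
Your reduction stalls exactly at the point you flag yourself: the level~$\ge 2$ (Jordan block) case, and what you offer there (``I expect to do this by a triangular refinement\dots'', ``This should keep the ordered monomials\dots linearly independent'') is not an argument. Note that the refinement you hope for is not a routine variant of Theorem \ref{xxthm0.2}: the PBW-type machinery (Theorem \ref{xxthm1.5}) requires in (I1.2.3) that the error terms $b_{ij}$ in $y_ig_j=\lambda_{ij}g_jy_i+b_{ij}$ lie in $D=C_0$, whereas your triangular relations $(\ast)$ produce error terms involving the lower $y_\ell$'s, which are outside $D$; so the coproduct bookkeeping in the proof of Theorem \ref{xxthm1.5} genuinely breaks, and repairing it would amount to proving a new strengthened PBW theorem. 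The paper never does this. Instead it splits the problem in two: (i) for $\gamma$ not a root of unity it proves there are \emph{no} Jordan blocks of size $\ge 2$ at all when $\GKdim H<\infty$ --- this is Lemma \ref{xxlem3.8}(e), whose proof runs through the free-subalgebra dichotomy of Proposition \ref{xxprop3.3}, itself resting on the auxiliary Hopf algebras $K(\Lambda,\mu)$, $L(\Lambda,\mu,p_1)$ and the antipode identity of Lemma \ref{xxlem3.1}(c); (ii) for the remaining higher-level elements (notably eigenvalue $1$), Theorem \ref{xxthm3.10} inducts on the level by passing to the associated graded Hopf algebra along the Hopf algebra filtration $F_0=C_0$, $F_1=F_0+F_0Y_{G1}$, $F_2=F_1^2+F_0Z$, $F_m=\sum F_iF_{m-i}$, which lowers the level by one, combined with $\GKdim H\ge \GKdim \gr_F H$ from \cite[Lemma 6.5]{KL}. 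Only honest level-$1$ families, satisfying (I1.2.1)--(I1.2.3) with $A=kG_0$, are ever fed into Theorem \ref{xxthm1.5}. None of this structure is present in your sketch.

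A second gap: you triangularize ``on a finite-dimensional $\langle W_\times\rangle$-invariant subspace containing the chosen classes,'' but the existence of such a subspace is precisely what has to be proved, not assumed. The paper extracts it from Zhuang's theorem (Lemma \ref{xxlem2.5}, requiring $\GKdim H<\infty$) to get finite level under a single conjugation (Lemma \ref{xxlem3.7}(a)), and then from Theorem \ref{xxthm1.5}(b) to bound the $G_0$-orbit spans and pass to generalized commutators for the whole group (Lemma \ref{xxlem3.7}(b,c), Corollary \ref{xxcor3.11}); your simultaneous triangularization silently presupposes all of this. Relatedly, your closing treatment of the ``infinite-level case'' is both unnecessary and invalid: unnecessary because once one assumes $\GKdim H<\infty$ (the only case requiring proof) Lemma \ref{xxlem2.5} forces every skew primitive to decompose into finite-level pieces, so the case does not arise; invalid because a profusion of linearly independent conjugate skew primitives does not by itself lower-bound GK-dimension --- one needs linear independence of the \emph{monomials} in them over $C_0$, i.e.\ exactly the PBW-type statement, so ``only increases the growth'' proves nothing. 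The eigenvalue bookkeeping in your first two paragraphs (homogeneity of $Y_{\sqrt{\;}}$ for the weight decomposition, diagonal normalization via $k(\mu-1)$, and the identification $Z=Y_*+Y_{\sqrt{\;}}+C_0$ with $Y_*\cap(Y_{\sqrt{\;}}+C_0)\subset C_0$) does match the paper's final reduction of Theorem \ref{xxthm0.4} to Theorem \ref{xxthm3.10}, but the core inequality behind Theorem \ref{xxthm3.10} is left unproved in your proposal.
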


When $H$ is cocommutative, equality holds in Theorem \ref{xxthm0.4},
see \eqref{I0.1.1}. There are examples such that $Z=Y_{\sqrt{\;}}+C_0$
and $\GKdim H= \GKdim C_0$, but $\dim Y_{\sqrt{\;}}$ is arbitrarily
large [Examples \ref{xxex2.7} and \ref{xxex3.13}]. Therefore it is
sensible to consider the quotient space $Z/(C_0+Y_{\sqrt{\;}})$ in the
above theorem. This is analogous to removing $W_{\sqrt{\;}}$ in Theorem
\ref{xxthm0.3}.

If $\langle W_{\times}\rangle$ is abelian, Theorem \ref{xxthm0.4} is 
a generalization of Theorem \ref{xxthm0.3} [Lemma \ref{xxlem3.12}]. 
After some analysis, Theorem \ref{xxthm0.2} (when $D=C_0$) can be viewed 
as a consequence of Theorem \ref{xxthm0.4}.
These lower bounds provide some evidence that the GK-dimension of
$H$ is related to some combinatorial data coming from the skew
primitive elements when $H$ is pointed.

The proof of these lower bounds is based on a version of the
Poincar{\'e}-Birkhoff-Witt (PBW) theorem [Theorem \ref{xxthm1.5}(b)]
which states that under some hypotheses the set of monomials generated
by skew primitive elements is linearly independent (over the Hopf
subalgebra $C_0$). Restricted to
the universal enveloping algebra of a finite dimensional Lie algebra,
Theorem \ref{xxthm1.5} implies the original PBW theorem. Theorem
\ref{xxthm1.5} is in a similar spirit to Kharchenko's quantum
analog of the PBW theorem \cite{Kh}. One of the hypotheses in
Theorem \ref{xxthm1.5} is (I1.2.3) which assume essentially
that the action of the group generated by weights on the space
generated by skew primitive elements is locally finite. When $\GKdim H$
is finite, this is a reasonable hypothesis indicated by a result of
the third-named author \cite[Theorem 1.2]{Zhu}
(see also Lemma \ref{xxlem2.5}).

In general we are far from answering Question \ref{xxque0.1}. There
are a lot of unsolved questions concerning the growth of Hopf algebras.
The hypotheses in Theorems \ref{xxthm0.3} and \ref{xxthm0.4} could be 
superfluous, but we don't know how to remove them at this moment.  When 
$\langle W\rangle$ is non-abelian, a possible better lower bound could 
be obtained by replacing $\#(W\setminus W_{\sqrt{\;}})$ in Theorem 
\ref{xxthm0.3} by $\GKdim k\langle W\rangle$, see Lemma \ref{xxlem2.6}(b) 
for details. It is expected that these lower bounds can (or should) be 
improved and that possible upper bounds should be found once finer 
invariants are introduced. The ultimate goal is to find a formula for 
the GK-dimension of a Hopf algebra which is analogous to Bass' theorem 
\cite[Theorem 11.14]{KL} in the group algebra case, and then eventually 
to solve Question \ref{xxque0.1}.

There are further connections between the growth of Hopf algebras
and $W$ and other invariants defined by skew primitive elements.
Let $\rank$ denote the torsionfree rank of an abelian group.

\begin{proposition}
\label{xxprop0.5}
Suppose $\langle W \rangle$ is abelian
and torsionfree. If  $\rank \langle \Gamma \rangle >\rank \langle
W\rangle=1$, then $H$ has exponential growth.
\end{proposition}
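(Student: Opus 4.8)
The plan is to isolate two skew primitive elements whose commutators are multiplicatively independent, to use the rank-one hypothesis on the weights to pin down how the whole group $\langle W\rangle$ conjugates them, and then to show that independence of the commutators forces the associated diagonal braiding to be \emph{generic}, so that these two elements already generate a subalgebra of exponential growth.

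First I would set up the reduction. Since $\rank\langle\Gamma\rangle\ge 2$ there are skew primitive elements $y_1,y_2\notin C_0$ whose commutators $c_i=\gamma(y_i)$ have infinite order and are multiplicatively independent. Put $g_i=\mu(y_i)$. As $\langle W\rangle$ is torsionfree abelian of rank one, the subgroup $\langle g_1,g_2\rangle$ is infinite cyclic, generated by a grouplike $g$, so $g_i=g^{n_i}$ with $n_i\in\ZZ$, and $n_i\ne0$ because $c_i\ne1$. Conjugation $T\colon a\mapsto g^{-1}ag$ fixes the weight of each $y_i$ (here I use that $\langle W\rangle$ is abelian), and $y_i$ is an eigenvector of $T^{n_i}=T_{g_i^{-1}}$ with eigenvalue $c_i$. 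Replacing each $y_i$ by a suitable $T$-eigencomponent, which has the same weight and the same commutator, and normalizing through \eqref{I0.3.3} (legitimate since $c_i\ne1$), I may assume $g^{-1}y_ig=\lambda_i y_i$ for scalars $\lambda_i\in\kx$ with $\lambda_i^{n_i}=c_i$, and hence $g_j^{-1}y_ig_j=\lambda_i^{n_j}y_i$ for all $i,j$.

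Next comes the arithmetic heart. Write $p:=\lambda_1^{n_2}\lambda_2^{n_1}$ for the product of the two cross-conjugation scalars. A direct computation shows that any identity $p=1$, or $p=c_1^{a}$, or $p=c_2^{a}$ with $a\in\ZZ$, produces a nontrivial relation $c_1^{s}c_2^{t}=1$: for example $p=1$ gives $c_1^{n_2^{2}}c_2^{n_1^{2}}=1$, while $p=c_1^{a}$ gives $c_2^{n_1^{2}}=c_1^{n_2(an_1-n_2)}$, whose exponents are not all zero because $n_1,n_2\ne0$. Since $c_1,c_2$ are independent, none of these can hold; thus $p\ne1$ and $p$ lies in neither $\langle c_1\rangle$ nor $\langle c_2\rangle$. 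Because $c_1=q_{11}$ and $c_2=q_{22}$ are not roots of unity while $p=q_{12}q_{21}$ is a power of neither, the diagonal braiding datum $(c_1,c_2,p)$ avoids every entry on the list of rank-two diagonal braidings of finite GK-dimension (the quantum plane $p=1$, the Cartan cases in which $p$ is a power of some $q_{ii}$, and the cases requiring a $q_{ii}$ to be a root of unity).

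Finally I would convert genericity into exponential growth by proving that the words $y_{i_1}\cdots y_{i_m}$, $i_j\in\{1,2\}$, are linearly independent over $C_0$; since there are $2^m$ of them in length $m$, this gives exponential growth of the subalgebra generated by $C_0,y_1,y_2$, and therefore of $H$. The conjugation $T$ assigns the word $y_{i_1}\cdots y_{i_m}$ the eigenvalue $\lambda_1^{\#\{j : i_j=1\}}\lambda_2^{\#\{j : i_j=2\}}$, which separates words of different bidegree but not those of the same bidegree; to separate the latter I would use the iterated coproduct through the left $q$-skew derivations $\partial_1,\partial_2$ read off from the $y_i$-components of $\Delta$, whose twisted Leibniz rule has structure constants that are monomials in the $\lambda_i$, so that $\partial_{i_m}\cdots\partial_{i_1}$ detects a word precisely when the relevant monomials are nonzero, which genericity guarantees. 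In the spirit of Theorem \ref{xxthm1.5}, with no commutation relation $y_1y_2=\beta y_2y_1+(\text{lower order})$ available, its basis of monomials over $C_0$ comprises the full set of words. The main obstacle is exactly this last implication, turning ``no finite-type relation among $(c_1,c_2,p)$'' into ``all words are independent'', for which one must either make the skew-derivation computation fully precise or invoke the classification of finite-growth rank-two diagonal braidings; a secondary technical point, the passage to common $T$-eigenvectors, I would settle either by Lemma \ref{xxlem2.5} or by noting that if $T$ failed to act locally finitely there would be infinitely many independent skew primitives of a single weight, already precluding subexponential growth.
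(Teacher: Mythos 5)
Your proposal reproduces the paper's opening and closing moves --- choosing $y_1,y_2$ with multiplicatively independent commutators, using $\rank\langle W\rangle=1$ and torsionfreeness to write $g_i=g^{n_i}$ in an infinite cyclic group, normalizing to simultaneous eigenvectors, and the exponent arithmetic (your computation is essentially the paper's \eqref{I3.4.1} and the final display of its proof) --- but the decisive middle implication is a genuine gap, and you say so yourself: you never prove that independence of $c_1,c_2$ forces a free subalgebra (or linear independence of all words). Neither fallback you offer works. A ``classification of finite-growth rank-two diagonal braidings'' is not in the paper and was not a theorem at the time; moreover it would address finite GK-dimension, whereas the contradiction hypothesis here is only subexponential growth, so intermediate growth would not be excluded. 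The skew-derivation separation of words of equal bidegree is precisely the hard content and is not carried out. The paper bridges this gap by a different and more elementary device: it constructs the auxiliary Hopf algebra $K(\Lambda,\mu)$ (Lemma \ref{xxlem3.1}; the variant $L(\Lambda,\mu,p_1)$ of Lemma \ref{xxlem3.2} handles the degenerate case), which contains a free subalgebra by the Diamond Lemma, and maps it onto the Hopf subalgebra generated by $x^{\pm1},y_1,y_2$. If $H$ has subexponential growth the kernel is nonzero, hence by \cite[Theorem 5.3.1]{Mo} contains a nonzero skew primitive $F$ of total $y$-degree $z\ge 2$, and the antipode identity $S(F)=-\mu(F)^{-1}F$, compared on the extremal terms $y_{i_1}\cdots y_{i_z}$ and $y_{i_z}\cdots y_{i_1}$, yields the quadratic constraint $(\lambda_{11})^{M_1(M_1-1)}(\lambda_{22})^{M_2(M_2-1)}(\lambda_{12}\lambda_{21})^{M_1M_2}=1$ with $M_1+M_2\ge 2$, which the rank-two arithmetic then kills. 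Note also that your genericity list ($p=1$ or $p\in\langle c_1\rangle\cup\langle c_2\rangle$) does not literally cover the mixed relations $c_1^{M_1(M_1-1)}c_2^{M_2(M_2-1)}p^{M_1M_2}=1$ that actually arise; they do all die, but only because independence of $c_1,c_2$ implies (after raising to the power $n_1n_2$) independence of the eigenvalues $\lambda_1,\lambda_2$, which is exactly the paper's concluding computation.

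There is a secondary flaw in your eigenvector reduction. Lemma \ref{xxlem2.5} is proved under the standing hypothesis $\GKdim H<\infty$ (it rests on \cite[Theorem 1.2]{Zhu}), which is not available here: assuming subexponential growth for contradiction does not bound the GK-dimension. And your alternative justification --- that infinitely many independent skew primitives of a single weight would already preclude subexponential growth --- is false: in Example \ref{xxex3.13} one has $\GKdim H=0$ while $\dim Y_{\sqrt{\;}}=\infty$ with all the $y_i$ of the single weight $x$. The correct repair, which is the paper's Corollary \ref{xxcor3.4}, uses that $c_i$ is \emph{not} a root of unity: $y_i$ and $g^{-1}y_ig$ share weight and commutator, so by Proposition \ref{xxprop3.3}(b1) their independence modulo $C_0$ would already produce a free subalgebra; hence the conjugation span is $2$-dimensional and Lemma \ref{xxlem2.2}(c) supplies the eigenvector, all under subexponential growth alone. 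So the overall architecture of your attempt is sound and parallel to the paper's, but as written it proves the proposition only modulo its central step.
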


Note that $\rank \langle \Gamma \setminus \Gamma_{\sqrt{\;}}\rangle
=\rank \langle \Gamma \rangle$ since elements in $\Gamma_{\sqrt{\;}}$
have finite order. The rank of $\langle W \rangle$ and
$\langle \Gamma \rangle$ should be related when $\GKdim H$ is finite.

\begin{question}
\label{xxque0.6}
Suppose $\rank \langle \Gamma \rangle >\rank \langle W \rangle$.
Does then $H$ have exponential growth?
\end{question}

Quite a few families of Hopf algebras of finite GK-dimension have been
analyzed extensively by several authors \cite{AA, AS1, AS2, Br1, Br2,
BG1, BG2, BZ, GZ1, GZ2, LWZ, WuZ1, WuZ2, Zhu} during the last few years.
But the classification of such Hopf algebras is far from complete.
These lower bounds are useful for studying pointed Hopf algebras
of low GK-dimension. For example, if $\GKdim H=2$, then there are
only three possibilities for $\GKdim C_0$, $\#(W\setminus W_{\sqrt{\;}})$,
$\#(\Omega\setminus \Omega_{\sqrt{\;}})$ and $\dim Z/(C_0+Y_{\sqrt{\;}})$.
This is one of the initial steps in our ongoing project of
classifying pointed Hopf algebra domains of GK-dimension two and three.

Definitions and basic properties of GK-dimension can be found in the
first three chapters of \cite{KL}. Our reference book for Hopf
algebras is \cite{Mo}.

\section{First Lower Bound Theorem}
\label{xxsec1}

In this section we prove Theorem \ref{xxthm0.2}. We need some
lemmas.

\begin{lemma}
\label{xxlem1.1}
Let $D$ be a Hopf subalgebra of $H$ and $0\neq F\in H$. Suppose that
\begin{enumerate}
\item
$L$ is a subcoalgebra of $H$ containing $D$,
\item
$L$ is a left $D$-module via the multiplication, and
\item
there are nonzero-divisors (regular elements) $h,g\in L$ such that
$\Delta(F)-F\otimes h-g\otimes F\in L\otimes L$.
\end{enumerate}
Define
$V=\{a\in D\mid a F \in L\}.$
Then $V$ is either 0 or $D$.
\end{lemma}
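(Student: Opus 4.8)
The plan is to show that $V$ is simultaneously a left ideal and a subcoalgebra of $D$, and then to let the counit and antipode of $D$ force the dichotomy. First I would record that $V$ is a left ideal: if $a\in V$ and $b\in D$ then $(ba)F=b(aF)\in L$ since $aF\in L$ and $L$ is a left $D$-module by (b), so $ba\in V$. In particular $1\in V$ would already give $V=D$, so it suffices to prove that a nonzero $V$ must contain $1$.

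The core step is to establish $\Delta(V)\subseteq V\otimes V$. Fix $a\in V$; since $D$ is a Hopf subalgebra we may write $\Delta(a)=\sum a_1\otimes a_2\in D\otimes D$, and by (c) the element $\xi:=\Delta(F)-F\otimes h-g\otimes F$ lies in $L\otimes L$. Then
\[
\Delta(aF)=\Delta(a)\Delta(F)=\sum (a_1F)\otimes(a_2h)+\sum(a_1g)\otimes(a_2F)+\sum(a_1\otimes a_2)\xi .
\]
Because $aF\in L$ and $L$ is a subcoalgebra (a), the left-hand side lies in $L\otimes L$; the third summand lies in $L\otimes L$ since $a_1,a_2\in D$ act on the two tensor factors of $\xi$, and $a_2h,\,a_1g\in L$ because $h,g\in L$. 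Writing $\pi\colon H\to H/L$ for the projection and $\phi\colon D\to H/L$, $\phi(a)=\pi(aF)$ (so $\ker\phi=V$), applying $\pi\otimes\id$ kills all summands except the first and applying $\id\otimes\pi$ kills all except the second, yielding $\sum\pi(a_1F)\otimes(a_2h)=0$ and $\sum(a_1g)\otimes\pi(a_2F)=0$.

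The step I expect to be the main obstacle is removing the factors $h$ and $g$. Here I would use regularity: right multiplication by $h$ and by $g$ are injective maps $D\to H$, and over the field $k$ they stay injective after tensoring with $\id$. This promotes the two relations to $\sum\pi(a_1F)\otimes a_2=0$ and $\sum a_1\otimes\pi(a_2F)=0$, that is, $\Delta(a)\in\ker(\phi\otimes\id)=V\otimes D$ and $\Delta(a)\in\ker(\id\otimes\phi)=D\otimes V$, where I use the field-flatness identities $\ker(\phi\otimes\id)=V\otimes D$ and $(V\otimes D)\cap(D\otimes V)=V\otimes V$. Intersecting gives $\Delta(a)\in V\otimes V$, so $V$ is a subcoalgebra.

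To finish I would split on the counit. If $\epsilon(V)=0$, then every $a\in V$ satisfies $a=\sum\epsilon(a_1)a_2=0$ because each $a_1\in V$, forcing $V=0$. Otherwise pick $a\in V$ with $\epsilon(a)=1$; applying the antipode and using that $V$ is a left ideal, $1=\epsilon(a)1=\sum S(a_1)a_2\in V$ since $S(a_1)\in D$ and $a_2\in V$, hence $V=D$. This gives the claimed dichotomy.
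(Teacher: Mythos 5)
Your proof is correct and follows essentially the same route as the paper's: expand $\Delta(aF)=\Delta(a)\Delta(F)$ using the decomposition $\Delta(F)=F\otimes h+g\otimes F+\xi$ with $\xi\in L\otimes L$, cancel $h$ and $g$ by their regularity to conclude $\Delta(V)\subseteq V\otimes V$, then combine the counit, the antipode identity $1=\sum S(a_1)a_2$ (with $S(a_1)\in D$ since $D$ is a Hopf subalgebra), and the left-ideal property of $V$ to obtain $1\in V$, hence $V=D$. The only divergence is bookkeeping: where the paper passes to the finite-dimensional subcoalgebra $C$ generated by $a$ with a basis adapted to $C\cap V$ and kills coefficients via linear independence of $\{a_ig\}$, you run the identical cancellation through the quotient map $\pi\colon H\to H/L$ and field-flatness facts ($\id\otimes r_h$ injective, $\ker(\phi\otimes\id)=V\otimes D$, and $(V\otimes D)\cap(D\otimes V)=V\otimes V$), a marginally cleaner formulation of the same argument.
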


\begin{proof}
Suppose $V$ is nonzero and let $a$ be a nonzero element in $V$. Let
$C$ be the subcoalgebra of $D$ generated by $a$. There is a
$k$-linear basis
$$\{a_1,\cdots, a_v,a_{v+1},\cdots, a_w\}$$
of $C$ such that $C\cap V$ is spanned by $\{a_1,\cdots,a_v\}$.
This means that $a_i F\in L$ for all $i\leq v$ and that any
nontrivial linear combination of $\{a_{v+1}F,\cdots,a_{w}F\}$ is
not in $L$. Write
$\Delta(a)=\sum_{1\leq i,j\leq w} \xi_{ij} a_i\otimes a_j$
for some $\xi_{ij}\in k$.

For simplicity, we use the symbol $ldt_1$ for any element in $L$ and
use $ldt_2$ for any element in $L\otimes L$. By the definition of
$V$, we have $aF+ldt_1=0$ for some $ldt_1\in L$ and whence
$$
\begin{aligned}
0&=\Delta(aF+ldt_1)=\Delta(a) \Delta(F)+\Delta(ldt_1)\\
&=(\sum_{i,j}\xi_{ij} a_i\otimes a_j)
(F\otimes h+g\otimes F+ldt_2)+ldt_2\\
&=(\sum_{i,j}\xi_{ij} a_i\otimes a_j)
(F\otimes h)+(\sum_{i,j}\xi_{ij} a_i\otimes a_j) (g\otimes F)+ldt_2\\
&=(\sum_{\substack{i>v\\ \mathrm{all}\,j}}\xi_{ij} a_i\otimes a_j)
(F\otimes h)+(\sum_{\substack{j>v\\ \mathrm{all}\,i}}\xi_{ij} a_i\otimes a_j)
(g\otimes F)+ldt_2
\end{aligned}
$$
where the last equation uses the fact $a_i F\in L$ for all $i\leq v$.
The above equation implies that
$$(\sum_{\substack{j>v\\ \mathrm{all}\,i}}\xi_{ij} a_i\otimes a_j)(g\otimes F)
=-(\sum_{\substack{i>v\\ \mathrm{all}\,j}}\xi_{ij} a_i\otimes a_j)
(F\otimes h)+ldt_2
\in H\otimes L$$
or equivalently
$\sum_{i=1}^w (a_i g)\otimes (\sum_{j>v} \xi_{ij} a_j F)\in H\otimes L$.
Since $\{a_i g\}_{i=1}^w$ is linearly independent, we have
$\sum_{j>v} \xi_{ij} a_j F\in L$ for all $i$. By the definition of
$\{a_{v+1},\cdots,a_w\}$, we obtain that $\xi_{ij}=0$ for all $j>v$.
Similarly, $\xi_{ij}=0$ for all $i>v$. Thus $\Delta(a)\in V\otimes V$
and hence $V$ is a subcoalgebra of $D$. Since $V$ is a subcoalgebra,
there is an element $v\in V$ such that $\epsilon(v)=1$. Then
$$1 F=\epsilon(v) F=\sum S(v_1) v_2 F\in L$$
since $v_2\in V$ and $S(v_1)\in D$. This shows that $1\in V$. Since
$V$ is a left ideal of $D$, $V=D$.
\end{proof}

\begin{remark}
\label{xxrem1.2} Recall that $({\mathbb N}^v,+)$, for every $v\geq 1$, 
is a linearly ordered semigroup with respect to the following ordering. 
Define $(c_1,\cdots,c_v)<
(d_1,\cdots, d_v)$ if either $\sum_{i=1}^v c_i<\sum_{i=1}^v d_i$ 
or $\sum_{i=1}^v c_i=\sum_{i=1}^v d_i$ and there is a $p<v$ such 
that $c_i=d_i$ for all $i\leq p$ and $c_{p+1}<d_{p+1}$. 

Let $T:=k\langle \{x_j\}_{j\in J}\rangle$ be the free algebra generated
by $\{x_j\}_{j\in J}$. Given any family $(f_j)_{j\in J}$ where $f_j\in 
{\mathbb N}^v$, 
we can define an ${\mathbb N}^v$-graded structure 
on $T$ by setting $\deg x_j=f_j$ for all $j\in J$. Then $T=\bigoplus_{w\in 
{\mathbb N}^v} T_w$. Since 
${\mathbb N}^v$ is linearly ordered, $T$ has a canonical
${\mathbb N}^v$-filtration defined by $F_{w}(T)=\sum_{w'\leq w} T_{w'}$.
Let $B$ be any factor ring of $T$. The ${\mathbb N}^v$-filtration 
on $T$ induces a unique ${\mathbb N}^v$-filtration on $B$, denoted by
$\{F_w(B)\mid w\in {\mathbb N}^v\}$. We say that an element $x\in B$ has 
{\it filtered multi-degree} 
$$\deg x:=w=(d_1, \cdots, d_v)$$
and {\it filtered total-degree} $d=\sum d_i$ if $x\in F_w(B)\setminus 
\sum_{w'<w} F_{w'}(B)$. Note that the filtered total-degree induces an 
$\mathbb{N}$-filtration on $B$.

In applications, we usually start with an algebra $A$ generated by 
$\{y_1,\cdots,y_v\}$ and $G=\{g_i\}_{i\in I}$ for some index set $I$. 
By the discussion in the previous paragraph, we 
can define two filtrations (and corresponding filtered degrees) on $A$ 
such that if $f=y_{i_1}y_{i_2} \cdots y_{i_s}\in A$, then
the filtered total $y$-degree of $f$ is at most $s$ and the filtered 
multi-$y$-degree of $f$ is 
at most $(n_1,\cdots, n_v)$ where $n_i$ is the number of $y_i$ appearing 
in $F$, and if $g\in G$, the filtered total-$y$-degree and the filtered 
multi-$y$-degree of $g$ are both 0.

These two filtrations can be extended to the tensor product 
$A\otimes A$, namely, $F_{w}(A\otimes A):=\sum_{w'+w''\leq w} 
F_{w'}(A)\otimes F_{w''}(A)$ for all $w\in {\mathbb N}^v$ (or 
$w\in {\mathbb N}$). For simplicity, the words  ``filtered" and 
``filtration" might be omitted below.
\end{remark}

Assume that $S:=\{y_i\}_{i\in I}$ is a set of skew primitive elements of 
$H$ where $I$ is either ${\mathbb N}$ or $\{1,\cdots,v\}$ for some 
positive integer $v$. Suppose that $D$ is a Hopf subalgebra of $H$ and that
\begin{enumerate}
\item[(I1.2.1)]
$g_i:=\mu(y_i)\in D$ for all $i\in I$,
\item[(I1.2.2)]
$S$ is linearly independent in the space $H/D$,
\item[(I1.2.3)]
for each pair $i\leq j$, $y_ig_j=\lambda_{ij} g_j y_i+b_{ij}$ for some
$\lambda_{ij}\in k^\times$ and $b_{ij}\in D$, and there is a subalgebra
$A\subset D$ containing all $b_{ij}$ such that
$y_i A\subset A y_i+A $ and $g_i A\subset A g_i+A$ for all $i$.
\end{enumerate}
In most of the applications $D$ is the coradical $C_0$ of $H$ and the
commutators of the $y_i$ exist. When $b_{ij}=0$ for all $i\leq j$, we may
take $A=k$ and then {\rm{(I1.2.3)}} is automatic.
For every positive integer $d$, define
$$S^d:=\{y_1^{d_1}\cdots y_n^{d_n} \cdots \mid \sum_s d_s=d\}.$$

The following lemma is known
and easy to check by a direct computation.

\begin{lemma}
\label{xxlem1.3}
Suppose {\rm{(I1.2.1)-(I1.2.3)}} hold.
\begin{enumerate}
\item
For every $n$,
$$\Delta(y_i^n)=\sum_{s=0}^n {n \choose s}_{\lambda_{ii}} g_i^{s} y_i^{n-s}
\otimes y_i^s+\sum_{s+s'<n} a_{ss'} y_i^s\otimes y_i^{s'}$$
for some $a_{ss'}\in \sum_{t\geq 0} A g_i^t$. If $b_{ii}=0$, then
$a_{ss'}=0$ for all $s,s'$.
\item
Let $\{y_1,y_2,\cdots,y_z\}$ be a finite subset of $S$. Then, for
$n_1,\cdots, n_z\geq 0$,
$$\begin{aligned}
\Delta(y_1^{n_1}\cdots y_{z}^{n_z})=&
\sum_{s_1,\cdots, s_z} (\prod_{t=1}^z {n_t \choose s_t}_{\lambda_{tt}}) 
c_{(s_t)}
g_{1}^{s_1}\cdots g_z^{s_z} y_1^{n_1-s_1}\cdots y_z^{n_z-s_z}
\otimes y_1^{s_1}\cdots y_z^{s_z}\\
&+ldt_2
\end{aligned}
$$
where $c_{(s_t)}= \prod_{i<j} \lambda_{ij}^{s_j(n_i-s_i)} \in k^{\times }$.
Here $ldt_2$ is a linear combination of elements of the form $f
y_1^{a_1}\cdots y_z^{a_z} \otimes y_1^{b_1}\cdots y_z^{b_z}$ with 
$\sum_{i}(a_i+b_i)<\sum_{i}n_i$ where
$f\in \sum_{t_1,\cdots, t_z\geq 0} A g_1^{t_1}\cdots g_z^{t_z}$.
If $b_{ij}=0$ for all $i\leq j$, then $ldt_2=0$.
\end{enumerate}
\end{lemma}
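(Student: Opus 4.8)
The plan is to prove both parts by a direct induction, deriving (b) from (a), with the Gaussian binomial Pascal identity as the algebraic engine and the filtration of Remark~\ref{xxrem1.2} to make ``lower-order'' precise.

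For part (a) I would induct on $n$. The case $n=1$ is exactly the skew-primitive relation $\Delta(y_i)=y_i\otimes 1+g_i\otimes y_i$, which matches the claimed formula with no lower terms. For the inductive step I would use that $\Delta$ is an algebra homomorphism, write $\Delta(y_i^{n+1})=\Delta(y_i^n)\Delta(y_i)$, substitute the inductive formula for $\Delta(y_i^n)$, and expand against $y_i\otimes 1+g_i\otimes y_i$. The only noncommutative input is the relation $y_ig_i=\lambda_{ii}g_iy_i+b_{ii}$ from (I1.2.3): iterating it gives
$$y_i^{m}g_i=\lambda_{ii}^{m}g_iy_i^{m}+(\text{terms of lower $y_i$-degree with coefficients in }A),$$
so that commuting $g_i$ leftward past $y_i^{n-s}$ produces the scalar $\lambda_{ii}^{n-s}$ on the leading term. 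Collecting the coefficient of $g_i^{s}y_i^{n+1-s}\otimes y_i^{s}$ then yields $\binom{n}{s}_{\lambda_{ii}}+\lambda_{ii}^{n+1-s}\binom{n}{s-1}_{\lambda_{ii}}$, and the $q$-Pascal identity $\binom{n+1}{s}_{q}=\binom{n}{s}_{q}+q^{\,n+1-s}\binom{n}{s-1}_{q}$ (with $q=\lambda_{ii}$) collapses this to $\binom{n+1}{s}_{\lambda_{ii}}$, giving the claimed leading sum.

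For part (b) I would induct on $z$, the base case $z=1$ being part (a). For the step I would factor $\Delta(y_1^{n_1}\cdots y_z^{n_z})=\Delta(y_1^{n_1}\cdots y_{z-1}^{n_{z-1}})\,\Delta(y_z^{n_z})$, insert the inductive formula for the first factor and part (a) for the second, and multiply in $H\otimes H$. In the first tensor slot one must move $g_z^{s_z}$ to the left past $y_1^{n_1-s_1}\cdots y_{z-1}^{n_{z-1}-s_{z-1}}$; by the relations $y_ig_z=\lambda_{iz}g_zy_i+b_{iz}$ for $i<z$, each transposition of one $g_z$ past $y_i^{\,n_i-s_i}$ contributes the factor $\lambda_{iz}^{\,n_i-s_i}$, so moving all $s_z$ copies produces exactly $\lambda_{iz}^{\,s_z(n_i-s_i)}$. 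Multiplying over $i<z$ and combining with the inductively obtained $\prod_{i<j<z}\lambda_{ij}^{\,s_j(n_i-s_i)}$ gives $c_{(s_t)}=\prod_{i<j\le z}\lambda_{ij}^{\,s_j(n_i-s_i)}$, while the second tensor slot assembles $y_1^{s_1}\cdots y_{z-1}^{s_{z-1}}\cdot y_z^{s_z}$ and the factor $\binom{n_z}{s_z}_{\lambda_{zz}}$ comes directly from part (a). This reproduces the stated leading term.

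The main work — and the only real obstacle — is the bookkeeping of the error terms: showing every remaining term genuinely has strictly smaller total $y$-degree and retains a coefficient in $\sum_{t}Ag_i^{t}$ (resp.\ $\sum A g_1^{t_1}\cdots g_z^{t_z}$). For this I would fix the total $y$-degree filtration of Remark~\ref{xxrem1.2} and verify that the class of admissible coefficients is stable under the operations that arise. Using that $A$ is a subalgebra with $y_iA\subseteq Ay_i+A$ and $g_iA\subseteq Ag_i+A$, one checks $g_i^{t}A\subseteq\sum_{t'\le t}Ag_i^{t'}$, hence that $\sum_t Ag_i^t$ is closed under right multiplication by $g_i$ and by elements of $A$, and that left multiplication by $y_i$ raises the $y_i$-degree by one without leaving the class. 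Each $b_{ij}\in A$ introduced by a transposition strictly lowers the $y$-degree, so all such contributions fall into $ldt_2$. Finally, if every $b_{ij}=0$ the commutations are exact, no $A$-valued corrections ever appear, and the formulas hold with $ldt_2=0$ (and $a_{ss'}=0$), as claimed.
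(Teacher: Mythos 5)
Your proposal is correct and coincides with what the paper intends: the paper gives no written proof, stating only that the lemma ``is known and easy to check by a direct computation,'' and your induction on $n$ (via the $q$-Pascal identity $\binom{n+1}{s}_{q}=\binom{n}{s}_{q}+q^{\,n+1-s}\binom{n}{s-1}_{q}$) and on $z$, together with the closure properties $g_i^{t}A\subseteq\sum_{t'\le t}Ag_i^{t'}$ and $y_iA\subseteq Ay_i+A$ keeping the error coefficients in $\sum_{t}Ag_i^{t}$ (resp.\ $\sum A g_1^{t_1}\cdots g_z^{t_z}$), is exactly that computation carried out in full.
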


For $\alpha=(n_1, \cdots, n_z, 0, \cdots )$, define
\begin{equation}
\label{I1.3.1}\tag{I1.3.1}
L_{\alpha}=\sum_{G} DG
\end{equation}
where $G$ runs through elements $y_1^{m_1}\cdots y_{w}^{m_w}$
such that $(m_1,\cdots,m_w,0,\cdots)<\alpha$. 

\begin{lemma}
\label{xxlem1.4}
Retain the notation as above and suppose {\rm{(I1.2.1)-(I1.2.3)}}
hold. Let $\alpha=(n_1, \cdots, n_z, 0, \cdots )$ and $F=y_1^{n_1}\cdots 
y_z^{n_z}$. Define
$$V=\{a\in D\mid a F\in L_{\alpha}\}.$$
Then $V$ is either 0 or $D$.
\end{lemma}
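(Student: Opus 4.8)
The plan is to derive the statement directly from Lemma~\ref{xxlem1.1}, applied to the subcoalgebra candidate $L:=L_\alpha$ of \eqref{I1.3.1}, with the distinguished elements $h:=1$ and $g:=g_1^{n_1}\cdots g_z^{n_z}$. If $\alpha=(0,\dots)$ then $F=1$ and the assertion is trivial, so I assume $\alpha\neq 0$; then $F\neq 0$, and the empty monomial has $y$-multidegree $0<\alpha$, so that $D=D\cdot 1\subseteq L_\alpha$. That $L_\alpha$ is a left $D$-module via multiplication is immediate from the definition $L_\alpha=\sum_G DG$, giving hypothesis~(b) of Lemma~\ref{xxlem1.1}. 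It therefore remains to verify hypothesis~(a), that $L_\alpha$ is a subcoalgebra of $H$, and hypothesis~(c), the two-sided estimate on $\Delta(F)$.

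For hypothesis~(a) I would show $\Delta(L_\alpha)\subseteq L_\alpha\otimes L_\alpha$. Since $D$ is a Hopf subalgebra we have $\Delta(D)\subseteq D\otimes D$, and since $L_\alpha$ is a left $D$-module it suffices to prove $\Delta(G)\in L_\alpha\otimes L_\alpha$ for each ordered monomial $G=y_1^{m_1}\cdots y_w^{m_w}$ with $(m_1,\dots,m_w,0,\dots)<\alpha$. Expanding $\Delta(G)$ by Lemma~\ref{xxlem1.3}(b), every leading summand already has the form $\bigl(g_1^{s_1}\cdots g_w^{s_w}\,y_1^{m_1-s_1}\cdots y_w^{m_w-s_w}\bigr)\otimes\bigl(y_1^{s_1}\cdots y_w^{s_w}\bigr)$, that is, a $D$-multiple of an ordered $y$-monomial tensored with an ordered $y$-monomial; the two factors carry $y$-multidegrees $(m_t-s_t)_t$ and $(s_t)_t$, both componentwise $\le (m_t)_t<\alpha$ and hence $<\alpha$ by monotonicity of the order. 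The $ldt_2$ terms have the same shape but strictly smaller total $y$-degree, so they too land in $L_\alpha\otimes L_\alpha$. Thus $\Delta(G)\in L_\alpha\otimes L_\alpha$.

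For hypothesis~(c) I would first observe that $h=1$ and $g=g_1^{n_1}\cdots g_z^{n_z}$ lie in $D\subseteq L_\alpha$ by (I1.2.1) and are units (the identity, respectively a product of group-like elements), hence nonzero-divisors. Applying Lemma~\ref{xxlem1.3}(b) to $F=y_1^{n_1}\cdots y_z^{n_z}$, the summand with all $s_t=0$ equals $F\otimes 1=F\otimes h$ and the summand with all $s_t=n_t$ equals $g\otimes F$. In every remaining leading summand one has $(s_t)_t\neq 0$ and $(s_t)_t\neq (n_t)_t$, so the right factor $y_1^{s_1}\cdots y_z^{s_z}$ has multidegree $(s_t)_t<\alpha$ and the left factor has $y$-multidegree $\le (n_t-s_t)_t<\alpha$; and each $ldt_2$ term has total $y$-degree $<\sum_i n_i$, forcing both of its factors below $\alpha$. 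Hence $\Delta(F)-F\otimes h-g\otimes F\in L_\alpha\otimes L_\alpha$, and Lemma~\ref{xxlem1.1} yields that $V$ is either $0$ or $D$.

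I expect the only delicate point to be the degree bookkeeping underlying steps~(a) and~(c): one must confirm that every tensor factor produced by $\Delta$ genuinely has $y$-multidegree strictly below $\alpha$. This rests on two features recorded in Remark~\ref{xxrem1.2} and Lemma~\ref{xxlem1.3}, namely that the ordering on $\mathbb N^v$ is a linear order compatible with addition—so that $\beta\le\gamma<\alpha$ forces $\beta<\alpha$—and that the correction terms in (I1.2.3), reflected in the $ldt_2$ of Lemma~\ref{xxlem1.3}, can only lower the $y$-degree. Granting these, the remainder is a direct check of the hypotheses of Lemma~\ref{xxlem1.1}.
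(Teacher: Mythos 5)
Your proposal is correct and takes essentially the same route as the paper: both apply Lemma \ref{xxlem1.1} to $L=L_\alpha$ with $h=1$ and $g=g_1^{n_1}\cdots g_z^{n_z}$, using Lemma \ref{xxlem1.3}(b) to show $\Delta(L_\alpha)\subseteq L_\alpha\otimes L_\alpha$ and to verify hypothesis (c). Your explicit degree bookkeeping and the treatment of the degenerate case $\alpha=0$ merely spell out what the paper's terser proof leaves implicit.
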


\begin{proof} Let $L$ denote $L_{\alpha}$ in the proof.
First we claim that $\Delta(L)\subset L\otimes L$. It suffices to
show that $\Delta(G)\in L\otimes L$ for all $G=y_1^{m_1}\cdots y_{w}^{m_w}$
with $(m_1,\cdots,m_w,0,\cdots)<\alpha$. By Lemma \ref{xxlem1.3},
$$
\Delta(G)=G\otimes 1+g_{1}^{m_1}\cdots g_{w}^{m_w}\otimes G+ldt_2 
\in L\otimes L.
$$
Thus we proved our claim. It is easy to see that the hypotheses in
Lemma \ref{xxlem1.1}(a,b) hold.
For the  hypothesis in Lemma \ref{xxlem1.1}(c), we note that
$$\Delta(F)=F\otimes 1+g_1^{n_1}\cdots g_z^{n_z}\otimes F+ldt'_2$$
by Lemma \ref{xxlem1.3}(b), where $ldt'_2\in L\otimes L$. The
assertion follows from Lemma \ref{xxlem1.1}.
\end{proof}

Here is the main result of this section. Recall that $g_i=\mu(y_i)$
for all $i$.

\begin{theorem}
\label{xxthm1.5}
Assume that {\rm{(I1.2.1)-(I1.2.3)}} hold. Let $\lambda_i$ denote
$\lambda_{ii}$ for all $i$.
\begin{enumerate}
\item
Suppose the elements in $\bigcup_{j\geq 0} S^j$ are linearly dependent over
$D$ (on the left or on the right). Then there is some $z\in \mathbb{N}$ 
such that
\begin{enumerate}
\item[(i)]
$\lambda_{z}$ is a primitive $p_z$-th  root of unity for some $p_z>1$,
\item[(ii)]
there are $a_i,b_j\in k$, $p_j\in {\mathbb N}$ such that
$y_z^{p_z}+\sum_{i} a_i y_i+ \sum_{j\neq z} b_j y_j^{p_j} \in D$,
\item[(iii)]
$g_i=g_z^{p_z}$ whenever $a_i\neq 0$ in part (ii), and
\item[(iv)]
$g_j^{p_j}=g_z^{p_z}$ and $\lambda_j$ is a primitive $p_j$-th  root of
unity whenever $b_j\neq 0$ in part (ii).
\end{enumerate}
\item
Suppose $\lambda_{i}$ is either 1 or not a root of unity for every $i$.
Then the elements in $\bigcup_{j\geq 0} S^j$ are linearly independent
over $D$ (on the left and on the right). As a consequence,
$$\GKdim H\geq \GKdim D+\#(S).$$
\end{enumerate}
\end{theorem}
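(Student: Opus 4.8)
The plan is to prove part (a)---the structural dichotomy---and then obtain (b) as its contrapositive together with a routine growth estimate. Throughout I work with the $y$-multidegree filtration of Remark \ref{xxrem1.2}, so that Lemma \ref{xxlem1.3}(b) computes the leading part of $\Delta$ on a monomial; left dependence is treated below and right dependence is symmetric (pass to $H^{\mathrm{op}}$). First I would reduce to a single reducible monomial: a left $D$-linear dependence $\sum_\alpha c_\alpha F_\alpha = 0$, with $\alpha_0$ the top multidegree carrying a nonzero coefficient, rewrites as $c_{\alpha_0}F_{\alpha_0}\in L_{\alpha_0}$, whereupon Lemma \ref{xxlem1.4} upgrades this to $F_{\alpha_0}\in L_{\alpha_0}$. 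So I may fix a monomial $F = y_1^{n_1}\cdots y_z^{n_z}$ with $F\in L_\alpha$ and choose $\alpha = (n_1,\ldots,n_z,0,\ldots)$ minimal among all such. The payoff of minimality is that every monomial of multidegree strictly below $\alpha$ stays $D$-linearly independent (else Lemma \ref{xxlem1.4} would produce a reducible monomial below $\alpha$), so in the associated graded $\gr_\beta$ is free of rank one on $\overline{y^\beta}$ for each $\beta < \alpha$.

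\emph{Main step.} Write $F = \sum_{G<\alpha} d_G G$ with $d_G\in D$ and apply $\Delta$ inside the bi-multidegree-filtered module, where $D$ sits in degree $0$. I would first exclude $|\alpha| = 1$: were $F = y_z$, the relation reads $y_z = d_0 + \sum_{j>z} d_j y_j$ with $d_j\in D$, and comparing the bidegree-$(e_j,0)$ component of $\Delta(y_z - \sum_{j>z} d_j y_j)\in D\otimes D$ forces $\Delta(d_j) = 0$, so $d_j = 0$ and $y_z\in D$, contradicting (I1.2.2). With $|\alpha|\ge 2$, Lemma \ref{xxlem1.3}(b) makes the leading part of $\Delta(F)$ equal to $\sum_{(s_t)}\bigl(\prod_t\binom{n_t}{s_t}_{\lambda_t}\bigr) c_{(s_t)}\, g^{s}y^{n-s}\otimes y^{s}$, the split $s$ occupying bi-multidegree $((n-s),(s))$. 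The key observation is that every $G<\alpha$ has multidegree $m\neq n$, so no split of any $\Delta(d_G G)$ (whose leading terms sit at bidegrees $((m-s'),(s'))$) can land in a bidegree of the shape $((n-s),(s))$, as that would require $m=n$, while the correction terms $ldt_2$ lie in strictly lower total bidegree. Hence each proper-split term of $\Delta(F)$ is uncancelled and, since $g^s$ is invertible and $\overline{y^{n-s}}\otimes\overline{y^{s}}$ is a free generator, $\prod_t\binom{n_t}{s_t}_{\lambda_t} = 0$ for every split with $0<\sum s_t<\sum n_t$. Feeding in the split concentrated fully in one variable shows that two distinct variables in $F$ would make this product equal $1$, impossible; so $F = y_z^{p_z}$, and the single-variable splits give $\binom{p_z}{s}_{\lambda_z} = 0$ for $0<s<p_z$, i.e. $\lambda_z$ has order exactly $p_z = |\alpha|\ge 2$---this is (i). For (ii)--(iv), I would note that $y_z^{p_z}$ is then $(1,g_z^{p_z})$-skew primitive modulo lower terms, and isolate the weight-$g_z^{p_z}$ skew-primitive content of $y_z^{p_z}\in L_\alpha$ by the same $\Delta$-comparison of the $(\,\cdot\,)\otimes 1$ and $g_z^{p_z}\otimes(\,\cdot\,)$ legs; only skew primitives of that exact weight survive outside $D$, namely the $y_i$ with $g_i = g_z^{p_z}$ and the pure powers $y_j^{p_j}$ with $\lambda_j$ a primitive $p_j$-th root and $g_j^{p_j} = g_z^{p_z}$, which is precisely relation (ii) with constraints (iii), (iv).

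\emph{Part (b).} If every $\lambda_i$ is $1$ or not a root of unity, then (i) is impossible, so by (a) the monomials $\bigcup_{j\ge 0}S^j$ are $D$-linearly independent on both sides. For the growth bound (say $\#(S) = w<\infty$), choose a finite-dimensional $V\subseteq D$ with $1\in V$ and put $V' = V + \sum_{i=1}^w k y_i$; the $D$-independence of the $y^\alpha$ makes the elements $d\,y^\alpha$ (for $|\alpha|\le n$ and $d$ in a basis of $V^{\,n-|\alpha|}$) linearly independent and contained in $(V')^n$, so $\dim (V')^n\ge \sum_{|\alpha|\le n}\dim V^{\,n-|\alpha|}\ge \#\{\alpha:|\alpha|\le n/2\}\cdot\dim V^{\lceil n/2\rceil}$. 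Since the number of such multidegrees grows polynomially of degree $w$, the standard Gelfand--Kirillov bookkeeping gives $\GKdim H\ge \GKdim D + w$, and $\#(S)=\infty$ follows by letting $w\to\infty$.

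\emph{Main obstacle.} I expect the genuine difficulty to be the main step, specifically the assurance that the mixed leading terms of $\Delta(F)$ are independent in the associated graded and therefore cannot cancel. This is exactly where minimality of $\alpha$ is indispensable, and where one must check that the correction terms $a_{ss'}$ and $ldt_2$ coming from (I1.2.3) always fall into strictly lower filtration. The decisive subtlety is to track bi-\emph{multi}degree rather than total degree: monomials $G<\alpha$ of the \emph{same} total degree as $F$ do contribute mixed terms to $\Delta(G)$, and it is only the multidegree mismatch $m\neq n$ that separates them from $F$. Pinning down the exact shape (ii)--(iv) from the bare vanishing is the other delicate point, though only (i) is needed for (b).
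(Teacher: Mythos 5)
Your proposal is correct and takes essentially the same route as the paper's own proof: reduction via Lemma \ref{xxlem1.4} to a single monomial $F\in L_\alpha$ with $\alpha$ minimal, freeness of $L_\alpha$ as a left $D$-module on the monomials below $\alpha$, comparison of the bi-multidegree-$\alpha$ terms of $\Delta(F)$ from Lemma \ref{xxlem1.3}(b) to force $\prod_t\binom{n_t}{s_t}_{\lambda_t}=0$ (with the concentrated split ruling out mixed monomials and the vanishing Gaussian binomials giving the primitive root of unity), the same inductive extraction of (ii)--(iv), and the same growth bookkeeping for part (b). One small repair in your degree-one exclusion: comparing the $(e_j,0)$-components forces $\Delta(d_j)=d_j\otimes 1$, i.e.\ $d_j\in k$, not $\Delta(d_j)=0$; the $(0,e_j)$-components then give $g_j=g_z$ whenever $d_j\neq 0$, so the contradiction with (I1.2.2) comes from the resulting nontrivial $k$-linear dependence of the $y$'s in $H/D$ rather than from $y_z\in D$ --- which is exactly the coproduct computation the paper performs at this step.
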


\begin{proof} (a)
Suppose that $\bigcup_{j\geq 0}
S^j$ is linearly dependent over $D$ on the left. Then there is an 
$F=y_1^{n_1}\cdots y_z^{n_z}\in S^d$ for some $d\ge 0$ such that
\begin{equation}
\label{I1.5.1}\tag{I1.5.1}
a F\in L_{\alpha}, \quad {\text{for some}}\quad 0\neq a\in D,
\end{equation}
where $\alpha=(n_1, \cdots, n_z, 0, \cdots )$. The definition of 
$L_{\alpha}$ is given in \eqref{I1.3.1}. Choose $F$ among all $(a,F)$ 
satisfying \eqref{I1.5.1} so that $\alpha$ is minimal with respect 
to the linear order $<$ defined in the beginning of Remark \ref{xxrem1.2}.
For simplicity let $L=L_{\alpha}$ for the rest of the proof. Let 
$V=\{b\in D\mid bF\in L\}$. Then $0\neq a\in V$. By Lemma \ref{xxlem1.4}, 
$1\in V$, or equivalently, $F\in L$. So we can write $F=ldt_1$ where $ldt_1$
denotes any element in $L$. By the minimality of $\alpha$, $L$
is a free left $D$-module with a basis $\{y_1^{m_1}\cdots y_{w}^{m_w}
\mid (m_1,\cdots,m_w,0,\cdots)<\alpha\}$. 
Note that $L\otimes L$ is a free $D\otimes D$-module with a basis
$$\{y_1^{m_1}\cdots y_{w}^{m_w}\otimes y_1^{l_1}\cdots y_{w'}^{l_{w'}}
\mid (m_1,\cdots,m_w,0,\cdots), (l_1,\cdots, l_{w'}, 0,\cdots)<\alpha\}.$$
We define a multi-degree on $L$ such that, for any nonzero $a\in D$, 
$\deg(a)=0$ and $\deg(a y_1^{m_1}\cdots y_{w}^{m_w})= (m_1, \cdots, 
m_w, 0, \cdots)$ whenever $(m_1, \cdots, m_w, 0, \cdots)<\alpha$.
Notice that under this definition $L$ is a graded $D$-module (but not an 
algebra), which can be viewed as a filtered $D$-module obviously. 
Extend this multi-grading naturally to $L\otimes L$ by adding 
the multi-degrees of the tensor components.

Recall that $F=y_1^{n_1}\cdots y_z^{n_z}$. We may
assume $n_z>0$ (if not, delete $y_z$ in the expression of $F$). Following
the last paragraph, there is an $ldt_1\in L$ such that $F=-ldt_1$, or
equivalently, $y_1^{n_1}\cdots y_z^{n_z}+ldt_1=0$. By the choice
of $F$, any element in $L$ has multi-degree less than
$\alpha$. Let $ldt_2$ denote any
element in $L\otimes L$ and let $lmt_2$ denote any element in
$L\otimes L$ with multi-degree less than $\alpha$.
Since the multi-degree of $ldt_1$ is less than $\alpha$,
$\Delta(ldt_1)$ is an $lmt_2$ by Lemma \ref{xxlem1.3}. Then, by 
Lemma \ref{xxlem1.3} again, we have
\begin{align}
\label{I1.5.2}\tag{I1.5.2}
0&=\Delta(F+ldt_1)=\Delta(F)+lmt_2\\
\notag
&= \sum_{s_1,\cdots, s_z}
(\prod_{t=1}^z {n_t \choose s_t}_{\lambda_t}) c_{(s_t)}
g_{1}^{s_1}\cdots g_z^{s_z} y_1^{n_1-s_1}\cdots y_z^{n_z-s_z}
\otimes y_1^{s_1}\cdots y_z^{s_z}+lmt_2\\
\notag
&= \sum_{(s_t)\neq (0),(n_t)}
(\prod_{t=1}^z {n_t \choose s_t}_{\lambda_t}) c_{(s_t)}
g_{1}^{s_1}\cdots g_z^{s_z} y_1^{n_1-s_1}\cdots y_z^{n_z-s_z}
\otimes y_1^{s_1}\cdots y_z^{s_z}\\
\notag
&\qquad + F\otimes 1+ g_{1}^{n_1}\cdots g_z^{n_z}\otimes F+lmt_2\\
\notag
&= \sum_{(s_t)\neq (0), (n_t)}
(\prod_{t=1}^z {n_t \choose s_t}_{\lambda_t}) c_{(s_t)}
g_{1}^{s_1}\cdots g_z^{s_z} y_1^{n_1-s_1}\cdots y_z^{n_z-s_z}
\otimes y_1^{s_1}\cdots y_z^{s_z}+lmt_2
\end{align}
where $lmt_2$ represents an element in $L\otimes L$
with multi-degree less than $\alpha$.
The multi-degree of $g_1^{n_1}\cdots g_z^{n_z} y_1^{n_1-s_1}\cdots
y_z^{n_z-s_z} \otimes y_1^{s_1}\cdots y_z^{s_z}$ equals
$\alpha$ for any $(s_t)\neq (0), (n_t)$. Using the fact that $L$ is
a free $D$-module with basis  $\{y_1^{m_1}\cdots y_{w}^{m_w}
\mid (m_1,\cdots,m_w,0,\cdots)<\alpha\}$, we obtain that
$(\prod_{t=1}^z {n_t \choose s_t}_{\lambda_t}) c_{(s_t)}=0$ or
$\prod_{t=1}^z {n_t \choose s_t}_{\lambda_t}=0$
for all $(s_t)\neq (0), (n_t)$. If $n_j>0$ for some $1\leq j<z$,
we take $(s_t)=(0,0,\cdots, 0,n_z)$, then
$\prod_{t=1}^z {n_t \choose s_t}_{\lambda_t}=1$, a contradiction.
Therefore $n_j=0$ for all $j<z$ which means that $F=y_z^{n_z}$.

If $n_z=1$, we have $y_z=\sum_{i<z} b_i y_i +c$ for $c,b_i\in D$.
Hence $\sum_{i<z} b_i y_i +c$ is $(1,g_z)$-primitive. Then
applying $\Delta$ we obtain that
$$\begin{aligned}
\Delta(b_i)&=b_i\otimes 1, \\
\Delta(b_i)(g_i\otimes 1)&=g_z\otimes b_i,\\
\Delta(c)&=c\otimes 1+g_z\otimes c.
\end{aligned}
$$
These imply that $b_i\in k$ and $g_i=g_z$ when $b_i\neq 0$.
This contradicts (I1.2.2). Therefore  $n_z>1$.

By the last two paragraphs, $n_z>1$ and $n_i=0$ for all $i<z$ and
${n_z \choose s_z}_{\lambda_z}=0$ for all $1\leq s_z\leq n_z-1$.
This can only happen when $\lambda_z$ is a primitive $n_z$-th root of unity
\cite[Lemma 7.5]{GZ2}.

Next let us re-name $n_z$ by $p_z$ and write $F=y_z^{p_z}$. Then 
$y_z^{p_z}+\sum_{i} b_i G_i+c_0=0$
where $b_i, c_0\in D$ and the $G_i$ are monomials 
with multi-$y$-degree less than $(0,\cdots, 0,p_z,0,\cdots)$ (where $p_z$ 
is in the $z$-th position). Repeating a computation similar
to \eqref{I1.5.2} (and the induction on the multi-$y$-degree of $G_i$) one can
show that each $G_i$ (when $b_i\neq 0$) is of the form $y_i^{n_i}$
and each $y_i^{n_i}$ is a skew primitive. If $n_i>1$, then
$\lambda_i$ is a primitive $n_i$-th  root of unity. In summary,
when $\lambda_i$ is not a root of unity, then $n_i=1$ and when
$\lambda_i$ is a primitive $p_i$-th root of unity, then $n_i$ is either 1
or $p_i$. So we have
$$-y_z^{p_z}=\sum_{i} a_i y_i+\sum_{j\neq z} b_j y_j^{p_j}+c$$
where $0\neq a_i, b_j\in D$ and $c\in D$. Thus $\sum_{i} a_i y_i+
\sum_{j} b_j y_j^{p_j}+c$ is $(1,g_z^{p_z})$-primitive. Since $L$
is a free left $D$-module, each of the nonzero $a_i y_i$, $b_j y_j^{p_j}$
and $c$ is $(1,g_z^{p_z})$-primitive. The coproduct computation shows
that $a_i, b_j\in k$ and $g_i=g_z^{p_z}$ and $g_j^{p_j}=g_z^{p_z}$.

(b) The first assertion is an immediate consequence of part (a).
To prove the second assertion, we take $W$ to be a finite dimensional
subspace of $D$ and let $S$ be a finite set $\{y_1,\cdots,y_z\}$. 
For a subspace $V\subset H$, let $V^n$ be the linear span of all 
elements $v_1\cdots v_n$ for $v_i\in V$. By the first assertion,
$$\begin{aligned}
\dim \; (W+k1+\sum_{i=1}^z ky_i)^{2n}&\geq 
\dim W^{n}(k1+\sum_{i=1}^z k y_i)^n\\
&\geq (\dim W^n) \# (\bigcup_{d=0}^n S^d)
\geq (\dim W^n) c n^z
\end{aligned}
$$
for some positive constant $c$. This implies that
$\GKdim H\geq \GKdim D+\# S$.
If $S$ is infinite, let $S'$ be any finite subset of $S$. Then
the above argument shows that $\GKdim H\geq \GKdim D+\# S'$ for
any $S'$. Thus  $\GKdim H=\infty=\GKdim D+\# S$.
\end{proof}

\begin{proof}[Proof of Theorem \ref{xxthm0.2}]
Let $S=\{y_1,\cdots,y_w\}$. Then (I1.2.1)-(I1.2.3)
follow easily from (a) and (b). The hypothesis in Theorem
\ref{xxthm1.5}(b) is the same as that in Theorem \ref{xxthm0.2}(c). 
Therefore the assertion follows from Theorem \ref{xxthm1.5}(b).
\end{proof}

The following easy lemma will be used implicitly later.

\begin{lemma}
\label{xxlem1.6}
Let $P$ be the set of all skew primitive elements in a Hopf algebra $H$
with weight $\mu$. Then $P$ is a $k$-subspace of $H$ and $P\cap C_0=
k(\mu-1)$.
\end{lemma}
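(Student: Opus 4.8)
The plan is to verify the two assertions separately, both by a direct coproduct computation, using only that distinct group-like elements of $H$ are linearly independent.

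First I would observe that $P$ is a $k$-subspace essentially for free: an element $y$ lies in $P$ precisely when $\Delta(y)-y\otimes 1-\mu\otimes y=0$, and since $\mu$ is a fixed group-like element and $\Delta$ is $k$-linear, the left-hand side depends $k$-linearly on $y$. Hence $P$ is the kernel of a linear map $H\to H\otimes H$, and therefore a subspace. (Here I include $0$ in $P$, as one must for the statement to make sense.) Next, the inclusion $k(\mu-1)\subseteq P\cap C_0$ is a one-line check: since $\mu$ is group-like, $\Delta(\mu-1)=\mu\otimes\mu-1\otimes 1=(\mu-1)\otimes 1+\mu\otimes(\mu-1)$, so $\mu-1$ is $(1,\mu)$-primitive, and clearly $\mu-1\in C_0=kG(H)$.

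For the reverse inclusion, I would take an arbitrary $x\in P\cap C_0$ and write it in the group-like basis, $x=\sum_{g\in G(H)}c_g\,g$ with only finitely many nonzero coefficients. Because each $g$ is group-like, $\Delta(x)=\sum_g c_g\,g\otimes g$, while the skew-primitive condition gives $\Delta(x)=x\otimes 1+\mu\otimes x=\sum_g c_g\,g\otimes 1+\sum_h c_h\,\mu\otimes h$. Now the elements $\{g\otimes h\}_{g,h\in G(H)}$ are linearly independent in $H\otimes H$, so I can equate the coefficient of each $g\otimes h$. Comparing the diagonal terms $g\otimes g$ for $g\neq 1,\mu$ forces $c_g=0$, so only $c_1$ and $c_\mu$ can survive; comparing the off-diagonal coefficient of $\mu\otimes 1$ (which is $0$ on the left and $c_1+c_\mu$ on the right) then yields $c_1+c_\mu=0$. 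Hence $x=c_\mu(\mu-1)\in k(\mu-1)$, completing the inclusion $P\cap C_0\subseteq k(\mu-1)$.

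The only point that needs a moment's care is the degenerate case $\mu=1$, where $k(\mu-1)=0$ and $P$ is the space of primitive elements: there the same coefficient comparison behaves slightly differently, since the diagonal term $1\otimes 1$ now gives $c_1=2c_1$ and hence $c_1=0$, while every other diagonal term gives $c_g=0$, so $P\cap C_0=0$, in agreement with $k(\mu-1)=0$. I do not expect any genuine obstacle here: the entire content of the proof is the bookkeeping of this coefficient comparison, which rests solely on the linear independence of distinct group-like elements.
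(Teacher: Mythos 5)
Your proof is correct and follows essentially the same route as the paper's: the paper also writes an element of $P\cap C_0$ as a $k$-linear combination of group-like elements and notes that the equation $\Delta(y)=y\otimes 1+\mu\otimes y$ forces $y\in k(\mu-1)$, leaving the coefficient comparison implicit. Your write-up simply makes that bookkeeping (including the degenerate case $\mu=1$) explicit, which is fine.
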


\begin{proof} It is clear that $P$ is a $k$-subspace of $H$. For any
element $y\in P\cap C_0$, write $y=\sum_{i=1}^n c_i g_i$ for some
$c_i\in k$ and $g_i\in G(H)$. Then the equation $\Delta(y)=
y\otimes 1+\mu\otimes y$
forces that $y\in k(\mu-1)$.
\end{proof}

\section{Second Lower Bound Theorem}
\label{xxsec2}

In this section we prove Theorem \ref{xxthm0.3}, which is a consequence
of Theorem \ref{xxthm0.2}. A stronger version
will be proved in the next section. Lemmas presented here are also
needed for the next section, and cannot be omitted  even if we skip
Theorem \ref{xxthm0.3}.   If $\GKdim H=\infty$, then
Theorem \ref{xxthm0.3} is vacuous. So we may assume
that $\GKdim H<\infty$. We refer to Section 0 for the definitions
of $W, \Omega, \Gamma$ and $W_{\sqrt{\;}}, \Omega_{\sqrt{\;}},
\Gamma_{\sqrt{\;}}$.

\begin{lemma}
\label{xxlem2.1}
Let $y$ be a skew primitive element not in $C_0$ such that $\gamma(y)$
is defined.
\begin{enumerate}
\item
If $\gamma(y)\in \Gamma\setminus \Gamma_{\sqrt{\;}}$, then $y^n$ is
not skew primitive for any $n>1$.
\item
If $\gamma(y)\in \Gamma_{\sqrt{\;}}$, then $\mu(y)\in W_{\sqrt{\;}}$.
\end{enumerate}
\end{lemma}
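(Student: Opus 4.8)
The plan is to reduce both parts to the single coproduct computation in Lemma~\ref{xxlem1.3}(a), after identifying $\gamma(y)$ with the commutation eigenvalue of $y$ past its weight. Write $g=\mu(y)$. By \eqref{I0.3.3} the existence of $\gamma(y)=c$ means $g^{-1}yg-cy=\tau(g-1)$ for some $\tau\in k$; multiplying on the left by $g$ gives $yg=c\,gy+b$ with $b=\tau(g^2-g)\in C_0$. Thus, taking $D=C_0$ and $A=k$, the element $y$ satisfies (I1.2.1)--(I1.2.3) with $\lambda_{yy}=\gamma(y)=c$, so the commutator is exactly the scalar $\lambda$ of Lemma~\ref{xxlem1.3}. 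That lemma then gives, for every $n$,
\[
\Delta(y^n)=\sum_{s=0}^n\binom{n}{s}_{c}\,g^s y^{n-s}\otimes y^s+\bigl(\text{terms of total }y\text{-degree}<n\bigr),
\]
and I would read off coefficients against the PBW basis produced by Theorem~\ref{xxthm1.5}(b).

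For (a), assume $\gamma(y)=c$ is $1$ or not a root of unity, and suppose for contradiction that $y^n$ is $(1,h)$-primitive for some $n>1$, where $h\in G(H)\subset C_0$. The $s=1$ summand above equals $[n]_c\,g\,y^{n-1}\otimes y$ with $[n]_c=1+c+\cdots+c^{n-1}$, and this scalar is nonzero exactly when $c=1$ (it is then $n\neq0$) or $c$ is not a root of unity --- this is where the hypothesis $\gamma(y)\notin\Gamma_{\sqrt{\;}}$ is used. Its $y$-bidegree $(n-1,1)$ differs from the bidegrees $(n,0),(0,n)$ occurring in $y^n\otimes1+h\otimes y^n$ and from every lower term (total $y$-degree $<n$). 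Since $c$ is $1$ or not a root of unity, Theorem~\ref{xxthm1.5}(b) makes $\{y^j\}$ left and right linearly independent over $C_0$, hence $\{y^i\otimes y^j\}$ independent over $C_0\otimes C_0$; equating coefficients in $\Delta(y^n)-y^n\otimes1-h\otimes y^n=0$ forces $[n]_c=0$, a contradiction.

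For (b), assume $\gamma(y)=c$ is a primitive $p$-th root of unity with $p>1$. Since $c\neq1$, I would pass to the normalized element $z:=y+(c-1)^{-1}\tau(g-1)$ of the discussion following \eqref{I0.3.3}, which is again $(1,g)$-primitive, lies outside $C_0$, has $\mu(z)=g=\mu(y)$, and satisfies $g^{-1}zg=cz$, i.e. $b_{zz}=0$. Applying Lemma~\ref{xxlem1.3}(a) to $z$ (the lower terms now vanish because $b_{zz}=0$) and using $\binom{p}{s}_{c}=0$ for $1\le s\le p-1$ gives
\[
\Delta(z^p)=\sum_{s=0}^p\binom{p}{s}_{c}\,g^s z^{p-s}\otimes z^s=z^p\otimes1+g^p\otimes z^p,
\]
so $z^p$ is $(1,g^p)$-primitive. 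Hence $z$ exhibits $g=\mu(z)=\mu(y)\in W_{\sqrt{\;}}$ with $n=p$.

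I expect the only real friction to be bookkeeping. In (a) one cannot normalize $b$ away when $c=1$, so the argument must carry the lower-order correction terms and rely on the total-$y$-degree filtration together with the two-sided independence of Theorem~\ref{xxthm1.5}(b) to guarantee they do not reach bidegree $(n-1,1)$. A second point requiring care in (b) is the degenerate case $z^p=0$: the displayed identity still holds, so $z^p$ satisfies the defining equation of a $(1,g^p)$-primitive, and one should confirm this is what the definition of $W_{\sqrt{\;}}$ intends --- as is forced by Example~\ref{xxex2.7}, where such $p$-th powers vanish yet the weight must lie in $W_{\sqrt{\;}}$.
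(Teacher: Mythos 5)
Your argument is correct and is essentially the paper's own proof: for (a) the paper likewise verifies (I1.2.1)--(I1.2.3) for the singleton $\{y\}$, invokes Theorem~\ref{xxthm1.5}(b) for the $C_0$-independence of $\{y^n\}$, and concludes from Lemma~\ref{xxlem1.3}(a) that $\Delta(y^n)\notin H\otimes k+C_0\otimes H$ (your extraction of the bidegree-$(n-1,1)$ coefficient $[n]_c\,g$ just makes that step explicit), and for (b) it performs the same normalization $y\mapsto y+\alpha(\mu(y)-1)$ and the same $q$-binomial vanishing, explicitly allowing $z^p=0$ as you anticipated. One small slip: your choice $A=k$ does not satisfy (I1.2.3) when $\tau\neq 0$, since that hypothesis requires $A$ to contain $b_{yy}=\tau(g^2-g)$; take $A=k[\mu(y)^{\pm 1}]$ as the paper does, which changes nothing downstream in your computation.
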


\begin{proof} (a) Take $S$ to be the singleton $\{y\}$ and $D=C_0$.
Then (I1.2.1)-(I1.2.3)  hold for $A=k[\mu(y)^{\pm 1}]$. Since
$\gamma(y)\in \Gamma\setminus
\Gamma_{\sqrt{\;}}$ the hypothesis in Theorem \ref{xxthm1.5}(b) holds.
By Theorem \ref{xxthm1.5}(b), $\{y^n\}_{n\geq 0}$ is linearly
independent over $C_0$. Since $\gamma(y)$ is not a root of unity,
for any $n>1$, $\Delta(y^n)\not\in H\otimes k+C_0\otimes H$ by Lemma
\ref{xxlem1.3}(a). Thus $y^n$ is not a skew primitive. The assertion
follows.

(b) Suppose $\gamma(y)\in \Gamma_{\sqrt{\;}}$. Since
$\gamma(y)\neq 1$, replacing $y$ by $y+\alpha(\mu(y)-1)$
for a suitable $\alpha\in k$, we have $\mu(y)^{-1}y\mu(y)=\gamma(y) y$.
Since $\gamma(y)$ is a primitive $n$-th  root of unity for some $n>1$,
Lemma \ref{xxlem1.3}(a) says that $\Delta(y^n)=y^n\otimes 1+\mu(y)^n\otimes
y^n$, which means that $y^n$ is a skew primitive (could be zero).
Therefore $\mu(y)\in W_{\sqrt{\;}}$.
%
\end{proof}

Let $G(H)$ denote the group of all group-like elements in a Hopf algebra
$H$. Recall that $\GKdim H<\infty$ by a general assumption in this section.

\begin{lemma}
\label{xxlem2.2}
Let $y$ be a skew primitive element not in $C_0$ and let $x=\mu(y)$.
Suppose $\gamma(y)$ exists. Assume that $G_0$ is a subgroup of $G(H)$
commuting with $x$. Let $V=k(x-1)+\sum_{g\in G_0} k (g^{-1} y g)$.
\begin{enumerate}
\item
Every $z\in V$ is $(1,x)$-primitive; and $\omega(z)=\omega(y)$
for all $z\in V\setminus k(x-1)$.
\item
If $\gamma(y)\in \Gamma\setminus \Gamma_{\sqrt{\;}}$, then
$\dim V\leq \GKdim H-\GKdim C_0+1$.
\item
Suppose that $V$ is finite dimensional and that $G_0$ is abelian.
Then there is $z\in V\setminus k(x-1)$ such that, either
\begin{enumerate}
\item[(ci)]
for every $g\in G_0$, $g^{-1}zg=\lambda_g z$ for some $\lambda_g\in
k^{\times}$, or
\item[(cii)]
for every $g\in G_0$, $g^{-1}z g=z+\tau_g(x-1)$ for some $\tau_g\in k$.
\end{enumerate}
\item 
If $\gamma(y)$ is not a root of unity, then $\mu(y)$ has infinite order.
\end{enumerate}
\end{lemma}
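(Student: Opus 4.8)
The plan is to prove the four parts in order, with part (a) supplying the coproduct and commutator computations that the rest rely on, part (b) reducing to Theorem \ref{xxthm1.5}(b), part (c) analyzing the conjugation action of $G_0$ on $V$, and part (d) a short operator computation. For part (a), since $x$ is group-like we have $\Delta(x-1)=(x-1)\otimes 1+x\otimes(x-1)$, so $x-1$ is $(1,x)$-primitive; and for $g\in G_0$, applying $\Delta$ to $g^{-1}yg$ and using that $g$ is group-like with $g^{-1}xg=x$ gives $\Delta(g^{-1}yg)=(g^{-1}yg)\otimes 1+x\otimes(g^{-1}yg)$. As the $(1,x)$-primitives form a $k$-subspace (Lemma \ref{xxlem1.6}), every $z\in V$ is $(1,x)$-primitive. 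Writing the commutator relation for $y$ in the form \eqref{I0.3.3}, namely $x^{-1}yx=cy+\tau(x-1)$ with $c=\gamma(y)$, and conjugating by $g$ (which fixes both $x$ and $x-1$) yields $x^{-1}(g^{-1}yg)x=c\,(g^{-1}yg)+\tau(x-1)$. For a general $z=a(x-1)+\sum_g b_g\,g^{-1}yg$ with some $b_g\neq0$ a direct computation then gives $x^{-1}zx-cz\in k(x-1)\subseteq C_0$; since such a $z$ is $(1,x)$-primitive but not in $k(x-1)$, it lies outside $C_0$ by Lemma \ref{xxlem1.6}, so $\gamma(z)=c$ is the unique defining scalar and $\omega(z)=(x,c)=\omega(y)$.

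For part (b) I would apply Theorem \ref{xxthm1.5}(b) with $D=C_0$ and $A=k[x^{\pm1}]$. Choose $z_1,\dots,z_m\in V\setminus k(x-1)$ whose images form a basis of $V/k(x-1)$. Since $V\cap C_0=k(x-1)$ by Lemma \ref{xxlem1.6}, these are linearly independent in $H/C_0$, giving (I1.2.2), while $\mu(z_i)=x\in C_0$ gives (I1.2.1). From part (a), $x^{-1}z_ix=cz_i+\tau_i(x-1)$, hence $z_ix=c\,xz_i+\tau_i(x^2-x)$ with $\tau_i(x^2-x)\in A$; iterating conjugation by $x$ shows $x^{-j}z_ix^{j}=c^{j}z_i+(\text{element of }A)$, which verifies $z_iA\subseteq Az_i+A$ and $xA\subseteq Ax+A$, so (I1.2.3) holds with $\lambda_{ii}=c$. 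Because $\gamma(y)\in\Gamma\setminus\Gamma_{\sqrt{\;}}$, the scalar $c$ is either $1$ or not a root of unity, which is exactly the hypothesis of Theorem \ref{xxthm1.5}(b). That theorem gives $\GKdim H\geq\GKdim C_0+m$, so $\dim V=m+1\leq\GKdim H-\GKdim C_0+1$ (and in particular $V$ is forced finite-dimensional, using $\GKdim H<\infty$).

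For part (c), note first that the conjugation maps $T_g:a\mapsto g^{-1}ag$ preserve $V$, since $T_h(g^{-1}yg)=(gh)^{-1}y(gh)\in V$ and $T_h(x-1)=x-1$; moreover the $T_g|_V$ pairwise commute and satisfy $T_gT_h=T_{gh}$, using that $G_0$ is abelian. The line $k(x-1)$ is a trivial invariant subspace, so $G_0$ acts on $U:=V/k(x-1)$. Over the algebraically closed field $k$ a commuting family of operators on the nonzero finite-dimensional space $U$ has a common eigenvector $\bar z$, producing a character $\lambda:G_0\to k^{\times}$ and a lift $z\in V\setminus k(x-1)$ with $g^{-1}zg=\lambda_g z+\mu_g(x-1)$ for scalars $\mu_g$. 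Computing $T_{gh}z$ in two ways and using $gh=hg$ gives the consistency relation $\mu_g(\lambda_h-1)=\mu_h(\lambda_g-1)$. If $\lambda\equiv1$ this is precisely case (cii). Otherwise choose $h_0$ with $\lambda_{h_0}\neq1$ and set $\beta=\mu_{h_0}/(\lambda_{h_0}-1)$; the relation forces $\mu_g=\beta(\lambda_g-1)$ for all $g$, so $z'=z+\beta(x-1)\in V\setminus k(x-1)$ satisfies $g^{-1}z'g=\lambda_g z'$, which is case (ci). I expect this normalization—recognizing $g\mapsto\mu_g$ as a cocycle and extracting a single $\beta$ from the consistency relation—to be the main obstacle; the common-eigenvector input is standard.

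For part (d), suppose $x=\mu(y)$ has finite order $n$, so $T_x^{\,n}=T_{x^n}=\id$. Since $y\notin C_0$ while $x-1\in C_0$, the elements $y$ and $x-1$ are linearly independent, and on their span $T_x$ acts by $T_x y=cy+\tau(x-1)$ and $T_x(x-1)=x-1$, whence $T_x^{\,n}y=c^{\,n}y+\kappa(x-1)$ for some $\kappa\in k$. Comparing with $T_x^{\,n}y=y$ forces $c^{\,n}=1$, contradicting that $c=\gamma(y)$ is not a root of unity; hence $x$ has infinite order.
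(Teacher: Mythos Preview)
Your proof is correct. Parts (a) and (b) follow the paper's argument essentially verbatim; the only cosmetic difference is that the paper selects the basis of $V/k(x-1)$ among the conjugates $g_i^{-1}yg_i$ themselves (which makes all the $\tau_i$ equal to a single $\tau$), whereas you allow arbitrary $z_i\in V$. For (c) the paper takes a more representation-theoretic route: it looks for a $1$-dimensional simple $G_0$-submodule of $V$ itself rather than of the quotient. If such a line lies outside $k(x-1)$ one lands in (ci); otherwise $k(x-1)$ is the \emph{unique} simple submodule, so every composition factor of $V$ is trivial, and any $2$-dimensional submodule containing $k(x-1)$ gives (cii) directly. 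Your approach---common eigenvector on $V/k(x-1)$, then the cocycle identity $\mu_g(\lambda_h-1)=\mu_h(\lambda_g-1)$ to extract a single $\beta$---is equally valid and arguably makes the normalization step more explicit. For (d) the paper goes through part (c) with $G_0=\langle x\rangle$ to produce an honest eigenvector $z$ with $x^{-1}zx=\gamma(y)z$, then observes that the character $g\mapsto\lambda_g$ must have infinite image; your direct computation $T_x^{\,n}y=c^{\,n}y+\kappa(x-1)$ is shorter and avoids invoking (c) altogether.
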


\begin{proof}
(a) Since $gx=xg$ for all $g\in G_0$,
$g^{-1}y g$ is a $(1,x)$-primitive with $\omega(g^{-1}y g)=\omega(y)$.

(b) Let $S=\{g_i^{-1}yg_i\}_{i=1}^w$ be a finite subset of $V$ which
is linearly independent in the space $V/k(x-1)$. Here $g_i\in G_0$
for all $i=1,\cdots, w$. For different $i$,
we have $\mu(g_i^{-1}y g_i)=x$, and
$$x^{-1}(g_i^{-1}y g_i) x
=g_i^{-1}(x^{-1}yx)g_i=
\gamma(y) (g_i^{-1}y g_i)+\tau(x-1)$$
where $\tau$ is the same as the one in \eqref{I0.3.3}.
Then the hypotheses (I1.2.1)-(I1.2.3) hold for $A=k[x^{\pm 1}]$
and $D=C_0$.
Since $\lambda:=\gamma(y)$ is either 1 or not a root of unity,
Theorem \ref{xxthm1.5}(b) says that $\# S\leq
\GKdim H-\GKdim C_0$. Clearly $V\cap C_0=k(x-1)$. Thus
$$\dim V-1= \dim V/(V\cap C_0)= \# S \leq \GKdim H-\GKdim C_0$$
since $S$ is a basis of $V/(V\cap C_0)$.

(c) First we may assume $x\in G_0$. If not, replace $G_0$ by the
subgroup generated by $G_0$ and $x$ (replacing $G_0$ by this larger
subgroup does not enlarge $V$, because of \eqref{I0.3.3}).
Then $V$ is a $G_0$-module by
conjugation action. Since $G_0$ is abelian and $k$ is algebraically
closed, every finite dimensional simple $G_0$-module
is 1-dimensional. Thus $V$ has a 1-dimensional simple $G_0$-submodule
$kz$. If $z\not\in k(x-1)$, then $kz$ being a simple $G_0$-module is
equivalent to (ci). Otherwise, no element $z\in V\setminus
k(x-1)$ generates a simple $G_0$-submodule. Hence $V$ has a unique
simple $G_0$-submodule $M_0:=k(x-1)$. Note that $g^{-1}(x-1)g=(x-1)$
for all $g\in G_0$, so $M_0$ is the trivial $G_0$-module. Since
$G_0$ is commutative and $V$ has only one simple submodule, every
simple sub-quotient of $V$ must be isomorphic to the simple $M_0$.
Pick $z\in V\setminus k(x-1)$ so that the
submodule $M$ generated by $z$ is 2-dimensional. Then $M/k(x-1)\cong
M_0$, which says that $g^{-1}z g\equiv z$ modulo $k(x-1)$. Hence
$g^{-1}z g=z+\tau_g(x-1)$ for some $\tau_g\in k$.

(d) Let $G_0=\langle \mu(y)\rangle$. It follows from the definition 
that the existence of $\gamma(y)$ implies that $V$ is finite 
dimensional. Applying part (ci) to the cyclic group $G_0$ there is 
a skew primitive $z\in H\setminus C_0$  such that 
$$g^{-1} z g=\lambda(g) z$$
for all $g\in G_0$. It is also clear that $\lambda(\mu(y))=\gamma(y)$. 
Since $\gamma(y)$ is not a root of unity, the image of $\lambda:G_0\to 
k^{\times}$ is infinite. Consequently, $G_0$ is infinite and $\mu(y)$ 
has infinite order.
\end{proof}

\begin{lemma}
\label{xxlem2.3}
Let $\{z_i\}_{i=1}^w$ be a set of skew primitive elements not in $C_0$
such that $\gamma(z_i)$ exists for each $i$.
If the elements $\omega(z_1),\cdots, \omega(z_w)$ are distinct, then
$\{z_i\}_{i=1}^w$ is linearly independent in $H/C_0$.
\end{lemma}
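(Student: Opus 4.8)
The plan is to show that any $k$-linear relation $\sum_{i=1}^w a_i z_i\in C_0$ forces all $a_i=0$. I would separate the two coordinates of the weight commutator $\omega(z_i)=(\mu(z_i),\gamma(z_i))$ using two different structures: the comultiplication to distinguish the weights, and the conjugation action of the weight to distinguish the commutators. Throughout write $\pi\colon H\to H/C_0$ for the quotient map, and note that $C_0=kG(H)$ is a subcoalgebra, a subalgebra, and is stable under conjugation by any group-like element, so the operations below descend to $H/C_0$.

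First I would group the indices by weight. Let $g^{(1)},\dots,g^{(m)}$ be the distinct elements among the $\mu(z_i)$, put $I_t=\{i:\mu(z_i)=g^{(t)}\}$, and set $u_t=\sum_{i\in I_t}a_iz_i$, so that $u_t$ is $(1,g^{(t)})$-primitive and $\sum_t u_t\in C_0$ by hypothesis. Applying $\id\otimes\pi$ to $\Delta\bigl(\sum_t u_t\bigr)=\sum_t\bigl(u_t\otimes1+g^{(t)}\otimes u_t\bigr)$ and using $\Delta(C_0)\subseteq C_0\otimes C_0$, the left-hand side vanishes while the right-hand side collapses to $\sum_t g^{(t)}\otimes\pi(u_t)$ (the terms $u_t\otimes1$ die because $\pi(1)=0$). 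Since distinct group-like elements are $k$-linearly independent in $H$, I conclude $\pi(u_t)=0$, i.e.\ $u_t\in C_0$, for every $t$. By Lemma \ref{xxlem1.6} this places $u_t\in k(g^{(t)}-1)$.

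It then remains to handle a single weight block. Fix $t$, write $g=g^{(t)}$, and consider the conjugation $T_{g^{-1}}\colon a\mapsto g^{-1}ag$, which preserves $C_0$ and hence induces an operator on $H/C_0$. For $i\in I_t$ the definition of $\gamma(z_i)$ via \eqref{I0.3.2} gives $g^{-1}z_ig-\gamma(z_i)z_i\in C_0$, so in $H/C_0$ the vector $\pi(z_i)$, which is nonzero because $z_i\notin C_0$, is an eigenvector of $T_{g^{-1}}$ with eigenvalue $\gamma(z_i)$. Because the $\omega(z_i)$ are distinct while all weights in $I_t$ agree, the eigenvalues $\gamma(z_i)$ for $i\in I_t$ are pairwise distinct, so the eigenvectors $\{\pi(z_i)\}_{i\in I_t}$ are linearly independent in $H/C_0$. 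But $u_t\in k(g-1)\subseteq C_0$ means $\sum_{i\in I_t}a_i\pi(z_i)=\pi(u_t)=0$, forcing $a_i=0$ for all $i\in I_t$. Running this over every $t$ gives $a_i=0$ for all $i$, which is exactly the asserted independence in $H/C_0$.

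The only real obstacle is the weight-separation step, since a priori skew primitives of different weights could interact inside $C_0$; the comultiplication argument is what cleanly decouples them, after which distinguishing equal-weight elements by their commutator eigenvalues is routine linear algebra. One should also verify the compatibility assertions used implicitly—that $C_0$ is a subcoalgebra and is conjugation-stable, and that the existence of $\gamma(z_i)$ guarantees \eqref{I0.3.2} (equivalently \eqref{I0.3.3}) applies—but these are immediate from the definitions and Lemma \ref{xxlem1.6}.
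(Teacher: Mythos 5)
Your proof is correct, and it uses the same two structures as the paper in the same order --- the comultiplication to separate weights, then conjugation by the weight to separate commutators --- but with a genuinely different logical skeleton. The paper argues by contradiction: it picks a \emph{minimal} dependent subset $\{z_j\}_{j=1}^v$ with $\sum_j a_j z_j = c \in C_0$, applies $\Delta$ and uses minimality to force all $\mu(z_j)$ equal, then conjugates by the common weight $x$ to get a second relation $\sum_j \gamma(z_j)a_j z_j \in C_0$ and uses minimality again to force all $\gamma(z_j)$ equal, contradicting distinctness of the $\omega(z_j)$. You instead handle an arbitrary relation directly: the map $(\id\otimes\pi)\circ\Delta$ together with linear independence of distinct group-likes cleanly splits the relation into weight blocks $u_t \in C_0$, and within a block you observe that the classes $\pi(z_i)$ are eigenvectors of the operator induced by $T_{g^{-1}}$ on $H/C_0$ with pairwise distinct eigenvalues $\gamma(z_i)$, so standard eigenvector independence kills the block. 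Your route avoids the minimal-counterexample bookkeeping (where the paper's weight step is rather terse --- one must still group by weight inside $C_0\otimes C_0$ to contradict minimality, essentially your argument in disguise) and makes the equal-weight case transparent linear algebra; the paper's version is shorter on the page because minimality absorbs both elimination steps, including the implicit Vandermonde-type elimination that underlies eigenvector independence. One small remark: your appeal to Lemma \ref{xxlem1.6} to place $u_t \in k(g^{(t)}-1)$ is harmless but unnecessary --- you only use $\pi(u_t)=0$, which you already have. You are also right that the commutator here is the level-one notion of \eqref{I0.3.2}, which is the meaning of $\gamma$ throughout Section 2, so the eigenvector claim is legitimate.
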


\begin{proof} Suppose $\{z_i\}_{i=1}^w$ is linearly dependent in $H/C_0$. 
Pick a minimal subset, say $\{z_j\}_{j=1}^v$, such that $\sum_{j=1}^v 
a_j z_j =:c\in C_0$ for some scalars $a_j\in k^\times$. Thus $v>1$
since $z_i\not\in C_0$ for any $i$. Applying $\Delta$ to the equation
$\sum_{j=1}^v a_j z_j=c$ we have
$$\Delta(c)=\Delta(\sum_{j=1}^v a_j z_j)=\sum_{j=1}^v a_j z_j\otimes 1+
\sum_{j=1}^v a_j \mu(z_j)\otimes z_j=
c\otimes 1+\sum_{j=1}^v a_j \mu(z_j)\otimes z_j.$$
Hence $\sum_{j=1}^v \mu(z_j)\otimes a_j z_j\in C_0\otimes C_0$.
By the minimality of $v$, $\mu(z_j)=\mu(z_{j'})$ for all $j,j'$.

Set $x=\mu(z_j)$ for all $1\leq j\leq v$.
Applying the conjugation by $x$ to the equation $\sum_{j=1}^v a_j z_j=c$,
we obtain $\sum_{j=1}^v \gamma(z_j) a_j z_j=-\sum_{j=1}^v \tau_j(x-1)+
x^{-1}c x\in C_0$ for some $\tau_j\in k$. Using the minimality of $v$, 
$\gamma(z_j)=
\gamma(z_{j'})$ for all $j,j'$. Thus we obtain a contradiction. The
assertion follows.
\end{proof}

\begin{theorem}
\label{xxthm2.4}
Let $\{y_i\}_{i=1}^w$ be a set of skew primitive elements not in $C_0$ 
such that
$\omega(y_1),\cdots, \omega(y_w)$ are defined and distinct elements in
$\Omega\setminus \Omega_{\sqrt{\;}}$. If the subgroup $G_0$ generated by
$\{\mu(y_i)\}_{i=1}^w$ is abelian, then $\GKdim H\geq \GKdim C_0+w$.
\end{theorem}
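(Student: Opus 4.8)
The plan is to deduce the bound from the PBW-type result Theorem \ref{xxthm1.5}(b), applied with $D=C_0$ and a suitably chosen finite set $S$ of skew primitives. If $\GKdim H=\infty$ the inequality is automatic, so I may assume $\GKdim H<\infty$ throughout. Writing $g_i=\mu(y_i)$, the hypotheses (I1.2.1) ($g_i\in C_0$) and (I1.2.2) (linear independence of $S$ in $H/C_0$) come essentially for free: the weights are group-like, hence lie in $C_0=kG(H)$, and the linear independence follows from Lemma \ref{xxlem2.3} because the $\omega(y_i)$ are distinct. The real work is to arrange (I1.2.3), i.e.\ to obtain honest commutation relations $z_ig_j=\lambda_{ij}g_jz_i+b_{ij}$ with $\lambda_{ij}\in k^{\times}$ and $b_{ij}\in A$ for a suitable subalgebra $A\subset C_0$. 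The raw commutator data only controls conjugation of $y_i$ by its own weight $g_i$; a priori it says nothing about conjugation by the other $g_j$. This is the main obstacle, and it is exactly where the hypothesis that $G_0=\langle g_1,\dots,g_w\rangle$ be abelian enters.

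To overcome it I would first replace each $y_i$ by a better representative of its weight-commutator class. Since $\omega(y_i)\in\Omega\setminus\Omega_{\sqrt{\;}}$, the commutator $\gamma(y_i)$ is either $1$ or not a root of unity, so $\gamma(y_i)\in\Gamma\setminus\Gamma_{\sqrt{\;}}$; hence by Lemma \ref{xxlem2.2}(b) the space
$$V_i=k(g_i-1)+\sum_{g\in G_0}k\,(g^{-1}y_ig)$$
is finite dimensional (here $G_0$ commutes with $g_i$ since it is abelian). Applying Lemma \ref{xxlem2.2}(c) to $V_i$ produces $z_i\in V_i\setminus k(g_i-1)$ transforming uniformly under all of $G_0$: either (ci) $g^{-1}z_ig=\lambda_g^{(i)}z_i$ for every $g\in G_0$, or (cii) $g^{-1}z_ig=z_i+\tau_g^{(i)}(g_i-1)$ for every $g\in G_0$. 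By Lemma \ref{xxlem2.2}(a) each $z_i$ is again $(1,g_i)$-primitive with $\omega(z_i)=\omega(y_i)$, so the $\omega(z_i)$ are still distinct and lie in $\Omega\setminus\Omega_{\sqrt{\;}}$; in particular $\gamma(z_i)$ is again $1$ or not a root of unity.

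Now I would run Theorem \ref{xxthm1.5}(b) on $S=\{z_i\}_{i=1}^w$ with $D=C_0$ and $A=kG_0\subseteq C_0$. For any $i,j$, conjugating by $g_j\in G_0$ gives, in case (ci), $z_ig_j=\lambda_{g_j}^{(i)}g_jz_i$ (so $\lambda_{ij}=\lambda_{g_j}^{(i)}$ and $b_{ij}=0$), and in case (cii), $z_ig_j=g_jz_i+\tau_{g_j}^{(i)}(g_jg_i-g_j)$ (so $\lambda_{ij}=1$ and $b_{ij}\in kG_0=A$). A parallel computation shows $z_iA\subset Az_i+A$ and $g_iA=A\subseteq Ag_i+A$, completing (I1.2.3). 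Finally the diagonal scalar is $\lambda_{ii}=\gamma(z_i)$ in case (ci) and $\lambda_{ii}=1$ in case (cii), so in every case $\lambda_{ii}$ is $1$ or not a root of unity, which is precisely the hypothesis of Theorem \ref{xxthm1.5}(b). That theorem then yields linear independence of $\bigcup_{j\ge0}S^j$ over $C_0$ and the desired conclusion $\GKdim H\ge\GKdim C_0+\#S=\GKdim C_0+w$.
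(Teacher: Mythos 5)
Your proof is correct and takes essentially the same route as the paper's: both replace each $y_i$ by a representative $z_i\in k(\mu(y_i)-1)+\sum_{g\in G_0}k(g^{-1}y_ig)$ transforming uniformly under $G_0$ via Lemma \ref{xxlem2.2}(a,c), verify (I1.2.1)--(I1.2.3) with $D=C_0$ and $A=kG_0$ (with linear independence from Lemma \ref{xxlem2.3} applied to the $z_i$, whose weight commutators $\omega(z_i)=\omega(y_i)$ remain distinct), and conclude from Theorem \ref{xxthm1.5}(b). Your explicit (ci)/(cii) case analysis and the finite-dimensionality check via Lemma \ref{xxlem2.2}(b) merely spell out details the paper leaves implicit.
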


\begin{proof} By Lemma \ref{xxlem2.2}(a,c), for each $i$, there is a
$z_i$ in $k(\mu(y_i)-1)+\sum_{g\in G_0} k g^{-1} y_i g$ but not 
in $C_0$ such that
$$\omega(z_i)=\omega(y_i),$$
and, for every $g\in G_0$,
$$g^{-1} z_i g=\lambda_{ig} z_i+\tau_{ig}(\mu(y_i)-1)$$
for some $\lambda_{ig}\in k^\times, \tau_{ig}\in k$.
Let $A=k G_0$ and $D=C_0$. Then (I1.2.1) is clear and (I1.2.2) follows from
Lemma \ref{xxlem2.3} for the set $\{z_i\}_{i=1}^w$.
(I1.2.3) is a consequence of Lemma \ref{xxlem2.2}(c) as we have seen
already. By hypothesis each $\lambda_i:=\gamma(z_i)$ is either 1 or
not a root
of unity. Therefore $\GKdim H\geq \GKdim C_0+w$ by applying Theorem
\ref{xxthm1.5}(b) to the set $\{z_i\}_{i=1}^w$.
\end{proof}

The next lemma is a result of \cite{Zhu}. As before we assume that
$\GKdim H<\infty$ which is one of the hypotheses in
\cite[Theorem 1.2]{Zhu}.

\begin{lemma}\cite[Theorem 1.2]{Zhu}
\label{xxlem2.5}
Let $y$ be a skew primitive element not in $C_0$ with $g=\mu(y)$.
Then there is a skew primitive element $z=\sum_{i=0}^n b_i g^{-i} y g^i\in
H\setminus C_0$, where $b_i\in k$, such that 
$g^{-1}z g=\lambda z+\tau(g-1)$ for some
$\lambda\in k^{\times}$ and $\tau\in k$. Further, if $\lambda\neq 1$,
then there is $z'=z+\alpha(g-1)$ for a suitable $\alpha\in k$
such that $g^{-1}z'g=\lambda z'$.
\end{lemma}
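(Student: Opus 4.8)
The plan is to study the conjugation operator $T:=T_{g^{-1}}$, $a\mapsto g^{-1}ag$, restricted to the space $P$ of all $(1,g)$-primitive elements of $H$. First I would record the structural facts that make this linear-algebra approach work: $P$ is a $k$-subspace of $H$ stable under $T$ (a direct coproduct computation shows that each $g^{-i}yg^{i}=T^{i}(y)$ is again $(1,g)$-primitive), the element $g-1$ lies in $P$ and is fixed by $T$, and by Lemma \ref{xxlem1.6} one has $P\cap C_0=k(g-1)$. Thus $T$ descends to an operator $\bar T$ on the quotient $\bar P:=P/k(g-1)$, and the cyclic subspace $U:=\sum_{i\ge 0}kT^{i}(y)$ has image $\bar U\subseteq\bar P$ which is $\bar T$-invariant and nonzero, the latter because $y\notin C_0$.

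The crux, and the one genuinely hard input, is that $\bar U$ is finite dimensional. This is exactly the content of \cite[Theorem 1.2]{Zhu}, and it is where the standing hypothesis $\GKdim H<\infty$ is used. I would emphasize that this step cannot be reduced to Theorem \ref{xxthm1.5}: since $\gamma(y)$ need not exist, there is in general no relation of the form $yg=\lambda gy+b$, so the conjugates $T^{i}(y)$ need not satisfy hypothesis (I1.2.3), and the local finiteness of the $\langle g\rangle$-action on skew primitives must instead be extracted from the growth estimate of \cite{Zhu}. Everything after this point is elementary.

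Granting the finite-dimensionality of $\bar U$, I would extract the desired $z$ by linear algebra over the algebraically closed field $k$. The operator $\bar T$ on the finite-dimensional nonzero space $\bar U$ has an eigenvector $\bar z$ with eigenvalue $\lambda$; lifting $\bar z$ to an element $z\in U$, which is automatically of the form $z=\sum_{i=0}^{n}b_i g^{-i}yg^{i}$, gives $T(z)-\lambda z\in k(g-1)$, i.e. $g^{-1}zg=\lambda z+\tau(g-1)$ for some $\tau\in k$, while $\bar z\neq 0$ forces $z\notin C_0$. To see that $\lambda\in k^{\times}$: if $\lambda=0$ then $g^{-1}zg=\tau(g-1)$, and applying the inverse conjugation $T^{-1}=T_g$, which also fixes $g-1$, yields $z=\tau(g-1)\in C_0$, a contradiction.

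Finally, for the eigenvector-on-the-nose statement when $\lambda\neq 1$, I would set $z'=z+\alpha(g-1)$ and use $T(g-1)=g-1$ to compute $g^{-1}z'g=\lambda z+\tau(g-1)+\alpha(g-1)$; choosing $\alpha=\tau/(\lambda-1)$ makes this equal $\lambda z'$. Since $g-1$ is itself $(1,g)$-primitive, $z'$ remains skew primitive with weight $g$, and $z'\equiv z\pmod{C_0}$ keeps it outside $C_0$. The main obstacle is thus isolated in the single imported fact that the span of the conjugates is finite dimensional; the remainder is the eigenvector extraction and the affine shift that removes the $\tau(g-1)$ term.
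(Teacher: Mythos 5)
Correct, and essentially the paper's own route: the paper likewise imports the hard content wholesale from \cite[Theorem 1.2]{Zhu} (saying the first assertion is \emph{equivalent} to it, which is precisely the local-finiteness/eigenvector-extraction equivalence you spell out, noting that your finite-dimensionality of $\bar U$ and the existence of the eigenvector-like $z$ imply each other since a nontrivial eigen-relation in the span of the conjugates is exactly a linear dependence among the $\bar T^i\bar y$) and then removes the $\tau(g-1)$ term by the same shift $\alpha=(\lambda-1)^{-1}\tau$. The only point you omit is the paper's explicit remark that \cite[Theorem 1.2]{Zhu} assumes $H$ pointed, a hypothesis the authors observe can be dropped; your write-up should flag this since the lemma here is stated for general $H$.
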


\begin{proof} In \cite[Theorem 1.2]{Zhu} $H$ is assumed to be pointed,
but the statement is valid without this hypothesis.
The first assertion is equivalent to \cite[Theorem 1.2]{Zhu}.
If $\lambda\neq 1$, take $\alpha=(\lambda-1)^{-1}\tau$. Then $z'=z+
\alpha (g-1)$ is a $(1,g)$-primitive element satisfying
$g^{-1}z'g=\lambda z'$.
\end{proof}

Now we are ready to prove Theorem \ref{xxthm0.3}.

\begin{proof}[Proof of Theorem 0.3]
Pick any finite subset $\{\mu(y_i)\}_{i=1}^w$ of
$W\setminus W_{\sqrt{\;}}$ where each $y_i$ is a skew primitive
not in $C_0$. By Lemma \ref{xxlem2.5},
for each $i$ there is a skew primitive $y'_i$ not in $C_0$ such that
$g_i:=\mu(y'_i)=\mu(y_i)$ and that $\gamma(y'_i)$ is
defined. By Lemma \ref{xxlem2.1}(b), $\gamma(y'_i)$
is not a root of unity or 1. Hence $\omega(y'_i)\in
\Omega\setminus \Omega_{\sqrt{\;}}$. The assertion follows
from Theorem \ref{xxthm2.4}.
\end{proof}

Theorem \ref{xxthm2.4} shows in fact
that if $\langle W\setminus W_{\sqrt{\;}}
\rangle$ is abelian, then
$$\GKdim H\geq \GKdim C_0+\#(\Omega\setminus \Omega_{\sqrt{\;}}).
$$
There is also an inequality
$$\GKdim H\geq \GKdim C_0+ \#( W')$$
for any $W'\subset W\setminus W_{\sqrt{\;}}$ such that $\langle W'\rangle$
is abelian.

Suppose there is a surjective Hopf algebra morphism
$\pi: H\to C_0$ such that the restriction to $C_0$ is
the identity. Let $A$ be the subalgebra of $H$ generated
by all skew primitive elements in $\ker \pi$
and let $G_W$ be the sub-semigroup of $G(H)$ generated
by $\mu(y)$ for all skew primitive elements $y\in A$.
We do not assume that $G_W$ is abelian.

\begin{lemma}
\label{xxlem2.6}
Suppose there is a surjective Hopf algebra morphism
$\pi: H\to C_0$ such that the restriction to $C_0$ is
the identity. Let $A$ be defined as above.
\begin{enumerate}
\item
$H=R\# C_0$ where $R$ is the ring of right coinvariants of $\pi$.
Then $A$ is a subalgebra of $R$ and
$$\GKdim H\geq \GKdim R+\GKdim C_0\geq \GKdim A+\GKdim
C_0.$$
\item
Assume that $A$ is a domain. Then
$\GKdim A\geq \GKdim k G_W$. As a consequence,
$$\GKdim H\geq \GKdim kG_W+\GKdim C_0.$$
\end{enumerate}
\end{lemma}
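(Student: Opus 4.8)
The plan is to handle (a) and (b) in turn. Part (a) rests on Radford's theorem on Hopf algebras with a projection together with a standard growth estimate for smash products, while part (b) rests on a $G(H)$-grading of $A$ that the projection $\pi$ induces, used in combination with the domain hypothesis. For (a), the splitting $\pi$ is exactly the hypothesis of Radford's biproduct theorem, which yields the stated decomposition $H=R\#C_0$ with $R=H^{\co\pi}$; in particular $R$ and $C_0$ are subalgebras and multiplication gives a $k$-linear isomorphism $R\otimes C_0\to H$. To see $A\subseteq R$, let $y\in\ker\pi$ be skew primitive, so $\Delta(y)=y\otimes 1+g\otimes y$ with $g=\mu(y)\in G(H)\subseteq C_0$. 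Applying the right coaction $\rho=(\id\otimes\pi)\Delta$ and using $\pi(y)=0$, $\pi(1)=1$ gives $\rho(y)=y\otimes 1$, so $y\in R$; hence the subalgebra $A$ generated by such $y$ lies in $R$.

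For the growth bound I would choose finite-dimensional $V\subseteq R$ and $U\subseteq C_0$, each containing $1$, and set $W=V+U$. Since $V^nU^n\subseteq W^{2n}$ and the map $R\otimes C_0\to H$ is injective, $\dim W^{2n}\ge(\dim V^n)(\dim U^n)$. Passing to logarithms and taking the supremum over $V$ then gives $\GKdim H\ge\GKdim R+\GKdim C_0\ge\GKdim A+\GKdim C_0$, the last step by monotonicity of $\GKdim$ under subalgebras. The one delicate point is that $\GKdim$ is a $\limsup$, so to split $\log\bigl((\dim V^n)(\dim U^n)\bigr)$ I would first replace $C_0=kG(H)$ by $kG'$ for a finitely generated subgroup $G'\le G(H)$ realizing $\GKdim C_0$ up to $\varepsilon$. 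By Gromov's and Bass's theorems the growth of $kG'$ is either genuinely polynomial, in which case $(\log\dim U^n)/\log n$ converges and the two $\limsup$s add, or exponential, in which case $\GKdim H=\infty$ already because $C_0\subseteq H$ and the inequality is automatic.

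For (b), the key observation is that $\delta:=(\pi\otimes\id)\Delta\colon H\to C_0\otimes H$ is an algebra map and a left $C_0$-comodule structure; since $C_0=kG(H)$ this is precisely a $G(H)$-grading $H=\bigoplus_{g}H^{(g)}$ with $H^{(g)}H^{(g')}\subseteq H^{(gg')}$. A skew primitive $y\in\ker\pi$ of weight $g$ satisfies $\delta(y)=g\otimes y$, so it is homogeneous of degree $g$; therefore $A$ is a graded subalgebra and every monomial $y_{i_1}\cdots y_{i_s}$ in the skew primitive generators is homogeneous of degree $g_{i_1}\cdots g_{i_s}\in G_W$. This is where the domain hypothesis enters: it forces each such monomial to be nonzero, so $A^{(g)}\neq0$ for all $g\in G_W$. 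Consequently, for finitely many generators $y_1,\dots,y_m$ with weights $g_1,\dots,g_m$, setting $V=k1+\sum_i ky_i$ and $U=k1+\sum_i kg_i\subseteq kG_W$, linear independence of distinct homogeneous components gives $\dim V^n\ge\#\{g\in G_W:\ g=g_{i_1}\cdots g_{i_s},\ s\le n\}=\dim U^n$. Taking suprema over finite subsets of generators yields $\GKdim A\ge\GKdim kG_W$, and combining with (a) gives the displayed consequence.

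I expect the main obstacle to be conceptual rather than computational: recognizing that $\delta$ supplies a $G(H)$-grading under which each skew primitive sits in the degree given by its weight, and seeing that the domain hypothesis is exactly what guarantees every weight in $G_W$ is realized by a nonzero homogeneous element. The $\limsup$ bookkeeping in (a) is the only genuinely technical nuisance, and it is dispatched by the reduction to a finitely generated subgroup described above.
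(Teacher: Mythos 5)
Your proof is correct and follows essentially the same route as the paper: part (a) invokes the Radford/Montgomery projection theorem (the paper cites \cite[Theorem 7.2.2 and Proposition 7.2.3]{Mo}) to get $H=R\#C_0$ with $A\subseteq R$ and then the smash-product growth estimate, and part (b) uses the coaction $(\pi\otimes \id)\circ\Delta$ to realize $A$ as a $G_W$-graded algebra, with the domain hypothesis guaranteeing that monomials in the skew primitives are nonzero homogeneous elements of the predicted degree --- exactly the role of the paper's multiplicative degree map $f$. Your extra $\limsup$ bookkeeping in (a), reducing to a finitely generated subgroup of $G(H)$ and invoking Gromov--Bass so that the group-algebra growth is a genuine limit, is a careful treatment of a point the paper asserts without comment, not a divergence in method.
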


\begin{proof} (a) By \cite[Theorem 7.2.2]{Mo}, $H$ is isomorphic
to a crossed product $R\#_{\sigma} C_0$ as algebras and by
\cite[Proposition 7.2.3]{Mo}, $\sigma$ is trivial. Hence
$H=R\# C_0$ where $R$ is the ring of right coinvariants of $\pi$.
It is clear that every skew primitive element in $\ker \pi$ is in $R$.
Therefore $A\subset R$.

Since $H=R\# C_0$, $\GKdim H\geq \GKdim R+\GKdim C_0$.
The assertion follows by the fact $A\subset R$.

(b) Define a map $\rho: A\to C_0\otimes H$ to be the
composition $(\pi\otimes Id_H)\circ \Delta$. Since $\rho(y)
\in kG_W\otimes A$ for all skew primitive elements $y\in A$ and
since $A$ is generated by these $y$'s, the image of $\rho$
is in $kG_W\otimes A$. Consequently, $(A,\rho)$
is a left $kG_W$-comodule algebra. This means that $A$ is
a $G_W$-graded algebra.  Let $f: A\to C_0$ be the
map sending any nonzero homogeneous element $h\in A$ to its degree,
for example, sending $y_1\cdots y_n$ to $\mu(y_1)\cdots \mu(y_n)$.
Since $A$ is a domain, $f$ is multiplicative.

Pick any finite dimensional space $V=k+\sum_{i=1}^m k \mu(y_i)$ of
$k G_W$ where the $y_i$ are skew primitive elements in $A$,
let $W=\{1\}\cup \{y_i\}_{i=1}^w$. Then $\dim (kW)^n\geq
\#(f(W^n))\geq  \# (f(W))^n\geq \dim V^n$ for all $n$. Hence
$\GKdim A\geq \GKdim k G_W$.
\end{proof}

The next example shows why we need to remove $W_{\sqrt{\;}}$
from $W$ (or remove $\Gamma_{\sqrt{\;}}$ from $\Gamma$)
in the lower bound theorems.

\begin{example}
\label{xxex2.7}
Let $B$ be the Hopf algebra $B(1,1,p_1,\cdots,p_s,q)$ defined in
\cite[Construction 1.2]{GZ2}. This is a finitely generated, noetherian,
pointed Hopf domain of GK-dimension  2. By
\cite[Construction 1.2]{GZ2} $B$ is generated by $x,x^{-1},
y_1,\cdots,y_s$ where $x$ is a group-like element and $y_i$'s are
skew primitive elements. Let $z=y_1^{p_1}$. Then $z=y_{j}^{p_j}$
for all $j$ and it is a central skew primitive element. Let
$H=B/(z, x^m-1)$ where $m=\prod_i p_i$. Then $H$ is a finite
dimensional pointed Hopf algebra of GK-dimension  0 and $C_0=
k[x,x^{-1}]/(x^m-1)$ has GK-dimension 0.

By \cite[Construction 1.2]{GZ2},
$W=W_{\sqrt{\;}}=\{x^{m_i}\}_{i=1}^s$ where $m_i=m/p_i$,
$\Gamma=\Gamma_{\sqrt{\;}}=\{q^{-m_i^2}\}_{i=1}^s$, $\Omega=
\Omega_{\sqrt{\;}}=\{(x^{m_i},q^{-m_i^2})\}_{i=1}^s$, and
$Z=Y_{\sqrt{\;}}+C_0$ and $Y_{\sqrt{\;}}=\sum_{i=1}^s k y_i$. Thus
$$\#(W_{\sqrt{\;}})=\#(\Gamma_{\sqrt{\;}})=
\#(\Omega_{\sqrt{\;}})=\dim Y_{\sqrt{\;}}=s$$
which can be arbitrarily large.
\end{example}

\section{Third lower bound theorem}
\label{xxsec3}

The first half of this section concerns some preliminary analysis
of Hopf algebras with exponential growth and the proof of Proposition
\ref{xxprop0.5}. The proof of the third lower bound theorem is given
at the end of the section.

Let $(G_0,\times)$ be a multiplicative abelian group and $\Lambda:=
\{\lambda_1,\cdots,\lambda_v\}$ be a list of 1-dimensional group
representations of $G_0$ for some $v>1$. Note that this list is allowed
to have repetitions. When some
$\lambda_i$ is the trivial representation of $G_0$ (namely,
$\lambda_i(g)=1$ for all $g\in G_0$), then we also need a group 
homomorphism $\tau_i: (G_0,\times)\to (k,+)$ (which must be zero if
$G_0$ is torsion since ${\text{char}}\; k=0$).
When $\lambda_i$ is not trivial, we set $\tau_i=0$.

Now pick a list of elements $\mu:=\{\mu_1,\cdots,\mu_v\}$ in $G_0$
(again allowing repetitions).
Let $K:=K(\Lambda,\mu)$ be the Hopf algebra generated as an algebra
by the elements
in the abelian group $G_0$ and a set of skew primitive elements
$y_1,\cdots,y_v$ subject to the relations within $G_0$ and the
following additional relations between $G_0$ and $\{y_i\}_{s=1}^v$,
$$\begin{aligned}
y_i g &=\lambda_{i}(g) g y_i+\tau_i(g) g(\mu_i-1),
{\text{ for all $i$ and all $g\in G_0$}}.
\end{aligned}
$$
The coalgebra structure of $K$ is determined by
$$\begin{aligned}
\Delta(g)=g\otimes g, &\quad
\epsilon(g)=1, \quad {\text{for all $g\in G_0$}},\\
\Delta(y_i)=y_i\otimes 1+\mu_i\otimes y_i, &\quad \epsilon(y_i)=0,
\quad {\text{for all $i=1,\cdots,v$}}.
\end{aligned}
$$
And the antipode of $K$ is determined by
$$\begin{aligned}
S(g)&=g^{-1}, \quad {\text{for all $g\in G_0$}},\\
S(y_i)&=-\mu_i^{-1} y_i,\quad {\text{for all $i=1,\cdots,v$}}.
\end{aligned}
$$
Let $\lambda_{ij}=\lambda_i(\mu_j)$ for all $i,j$ and let
$\Lambda_M$ be the  $v\times v$-matrix $(\lambda_{ij})$.

By Remark \ref{xxrem1.2}, the total $y$-degree and the
multi-$y$-degree are defined for elements in $K$. For example,
the (filtered) multi-$y$-degree of $g y_{3} y_2$ is $(0,1,1,0,\cdots,0)
\in {\mathbb N}^v$.

Let $F$ be a nonzero skew primitive element in $K$ with
total $y$-degree $z\ge 2$.
Write $F=\sum c_{h,(i_s)} hy_{i_1}y_{i_2}\cdots y_{i_n}$ where
$h\in G_0$ and $0\neq c_{h,(i_s)}\in k$. A {\it term} of $F$ means
a nonzero monomial $c_{h,(i_s)} hy_{i_1}y_{i_2}\cdots y_{i_n}$
appearing in $F$.

\begin{lemma}
\label{xxlem3.1} Let $K:=K(\Lambda,\mu)$ be defined as above.
\begin{enumerate}
\item
$K$ has a $k$-linear basis
$$\{gy_{i_1}y_{i_2}\cdots y_{i_s}\}$$
where $g\in G_0$, $i_1,\cdots, i_s\in \{1,\cdots, v\}$.
As a consequence, $K$ contains a free subalgebra
$k\langle y_1,y_2\rangle$ and has exponential growth.
\item
The coradical of $K$ is $kG_0$.
\item
If $F$ is a skew primitive element of total $y$-degree $z\ge 2$,
then for any term of $F$ with multi-$y$-degree $(N_1,\cdots, N_v)$ and
$\sum_i N_i=z$,
$$\prod_{i=1}^v (\lambda_{ii})^{ N_i(N_i-1)}\prod_{i<j}
(\lambda_{ij}\lambda_{ji})^{N_iN_j}=1.$$
\end{enumerate}
\end{lemma}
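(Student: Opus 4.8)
The plan is to prove the three parts in sequence, since (a) provides the linear basis that makes the coproduct computations in (c) unambiguous. For part (a), I would invoke Theorem \ref{xxthm1.5}(b): the defining relations of $K=K(\Lambda,\mu)$ are exactly of the form (I1.2.1)--(I1.2.3) with $D=kG_0$, $A=kG_0$, and $b_{ij}\in kG_0$ coming from the $\tau_i(g)g(\mu_i-1)$ terms. The relations $y_i g=\lambda_i(g)gy_i+\tau_i(g)g(\mu_i-1)$ let us rewrite any word in the generators so that all group elements are moved to the left, producing a spanning set $\{gy_{i_1}\cdots y_{i_s}\}$; the content of (a) is that this set is \emph{linearly independent}. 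Because $K$ is presented freely subject only to these relations, one direction is that $K$ is a quotient of the free-ish object, and the other is that no further collapse occurs. The cleanest route is to note that the monomials $y_{i_1}\cdots y_{i_s}$ form the set $\bigcup_{j\ge0}S^j$ with $S=\{y_1,\dots,y_v\}$, and since each diagonal relation has a $\lambda_{ii}$ that we may treat as an indeterminate (or simply appeal to the freeness of the presentation), Theorem \ref{xxthm1.5}(b) gives linear independence over $kG_0$. The free subalgebra $k\langle y_1,y_2\rangle$ and exponential growth then follow because the word monomials in $y_1,y_2$ are independent, giving $\dim(k1+ky_1+ky_2)^n\ge 2^n$.

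For part (b), the coradical $C_0$ always contains $kG_0$ since group-likes are grouplike. For the reverse inclusion I would use the basis from (a) together with the coalgebra filtration: I would show $kG_0$ is a subcoalgebra whose complement in the basis consists of elements of positive $y$-degree, and that the coradical filtration degree is controlled by $y$-degree. Concretely, by Lemma \ref{xxlem1.3}(a,b) applied to $K$, the coproduct of a monomial $gy_{i_1}\cdots y_{i_s}$ has ``leading'' terms $gy_{i_1}\cdots y_{i_s}\otimes 1$ and $g\mu_{i_1}\cdots\mu_{i_s}\otimes gy_{i_1}\cdots y_{i_s}$ plus lower $y$-degree, so the span of positive-degree monomials sits inside the higher coradical layers and the degree-$0$ part $kG_0$ is exactly $C_0$. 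This is essentially the assertion that $K$ is a pointed Hopf algebra with grouplikes $G_0$.

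For part (c), let $F$ be skew primitive of total $y$-degree $z\ge2$, say with weight $g_F$, so $\Delta(F)=F\otimes1+g_F\otimes F$. I would expand $\Delta(F)$ using the basis of (a) and Lemma \ref{xxlem1.3}(b), then isolate, for a fixed term with multi-$y$-degree $(N_1,\dots,N_v)$ and $\sum N_i=z$, the coefficient of the tensor monomial lying in the top $y$-degree stratum on \emph{both} tensor factors. Since $F$ is skew primitive, every ``middle'' component of $\Delta(F)$ --- i.e. every piece $(\cdots)\otimes(\cdots)$ where both factors have positive, nonmaximal $y$-degree --- must vanish. The structure constant $c_{(s_t)}=\prod_{i<j}\lambda_{ij}^{s_j(n_i-s_i)}$ from Lemma \ref{xxlem1.3}(b), combined with the $q$-binomial coefficients $\binom{N_i}{s_i}_{\lambda_{ii}}$, governs these middle terms; comparing the splitting that sends one copy of each $y_i$ to the right factor against the reverse splitting, and using that the full term reassembles to give $F\otimes1+g_F\otimes F$, forces a multiplicative relation among the $\lambda_{ij}$. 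The factor $\prod_i(\lambda_{ii})^{N_i(N_i-1)}\prod_{i<j}(\lambda_{ij}\lambda_{ji})^{N_iN_j}$ is precisely the ``round-trip'' scalar obtained by moving all $y$'s past their partners once in each direction.

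The main obstacle I expect is in part (c): the bookkeeping of which tensor components must cancel. The subtlety is that the $q$-binomials $\binom{N_i}{s_i}_{\lambda_{ii}}$ can individually vanish (when $\lambda_{ii}$ is a root of unity), so one cannot simply read off that each middle coefficient is zero and divide. The robust argument is to not rely on nonvanishing of any single binomial but instead to consider the \emph{product} of all middle-term coefficients, or equivalently to track how the scalar accumulates as one transports the rightmost $y$-factor through the word and back --- the identity $\prod_i\lambda_{ii}^{N_i(N_i-1)}\prod_{i<j}(\lambda_{ij}\lambda_{ji})^{N_iN_j}=1$ is exactly the statement that this transport is trivial, which is the compatibility condition forced by $F$ being genuinely skew primitive rather than merely having a skew-primitive leading term. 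I would make this precise by comparing $\Delta(F)$ computed via Lemma \ref{xxlem1.3}(b) against the required form $F\otimes1+g_F\otimes F$ in the associated graded with respect to the total $y$-filtration, where the lower-order ($ldt_2$) corrections drop out.
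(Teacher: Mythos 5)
Your plan for part (a) has a genuine gap: Theorem \ref{xxthm1.5}(b) cannot deliver what Lemma \ref{xxlem3.1}(a) asserts. First, that theorem only gives linear independence of the \emph{ordered} monomials $\bigcup_j S^j$, where $S^d=\{y_1^{d_1}\cdots y_v^{d_v}\mid \sum_s d_s=d\}$; your identification of the arbitrary words $y_{i_1}\cdots y_{i_s}$ with $\bigcup_j S^j$ is a misreading. Independence of all words is strictly stronger, and it is exactly this stronger statement that yields the free subalgebra $k\langle y_1,y_2\rangle$ and exponential growth; independence of ordered monomials would only give a polynomial-type lower bound on growth, so the ``consequence'' in (a) does not follow from your argument. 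Second, Theorem \ref{xxthm1.5}(b) requires each $\lambda_{ii}$ to be $1$ or not a root of unity, whereas Lemma \ref{xxlem3.1} makes no hypothesis on $\Lambda$ (and must not: e.g.\ with $\lambda_{ii}=-1$ the basis claim still holds, since $K$ has no relation involving $y_i^2$). Treating $\lambda_{ii}$ as an indeterminate does not repair this: the monomials span for every parameter value, so fibers can only collapse under specialization, and generic independence does not imply independence at root-of-unity values. The paper proves (a) by a direct application of Bergman's Diamond Lemma \cite[Theorem 1.2]{Be} to the presentation (the rewriting system pushing group elements leftward has all ambiguities resolvable, uniformly in $\Lambda$), which is essentially the only viable route. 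Your sketch of (b) is fine and agrees with the paper's: $\Delta$ respects the $y$-filtration, so group-likes have $y$-degree $0$.

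For part (c) you candidly flag the fatal obstacle of your own plan --- vanishing $q$-binomials and cross-cancellation among distinct words of the same multidegree in the middle tensor components --- but your proposed remedies (``the product of all middle-term coefficients'', ``track the transport scalar'') are not proofs; even in the one-variable case $F=y_1^{p}$ with $\lambda_{11}$ a primitive $p$-th root of unity, all middle terms of $\Delta(F)$ vanish and extracting the identity requires extra input such as \cite[Lemma 7.5]{GZ2}. The paper's mechanism, missing from your proposal, is the \emph{antipode}: a skew primitive $F$ satisfies $S(F)=-\mu(F)^{-1}F$, and since $S$ is an anti-homomorphism with $S(y_i)=-\mu_i^{-1}y_i$, it sends a top-degree term $c_{1,(i_s)}y_{i_1}\cdots y_{i_z}$ to $c_{1,(i_s)}(-1)^z\prod_{s>t}\lambda_{i_s i_t}^{-1}\,\mu^{-1}y_{i_z}\cdots y_{i_1}$ plus lower-order terms. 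By the basis from (a), $F$ must then contain the reversed word with some coefficient $c'_{(i_s)}$, and comparing coefficients of the word and its reversal in $S(F)=-\mu^{-1}F$ gives the pair of relations $-c'_{(i_s)}=c_{1,(i_s)}(-1)^z\prod_{s>t}\lambda_{i_s i_t}^{-1}$ and $-c_{1,(i_s)}=c'_{(i_s)}(-1)^z\prod_{a<b}\lambda_{i_a i_b}^{-1}$; multiplying eliminates the unknown coefficients and yields $\prod_{s\neq t}(\lambda_{i_s i_t})=1$, which is precisely the displayed identity. This sidesteps every vanishing issue, because the scalars $\lambda_{ij}$ are invertible, and it is the key idea you would need to import to make part (c) work.
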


\begin{proof} (a) The first assertion follows from Bergman's Diamond
Lemma \cite[Theorem 1.2]{Be}. Consequently, $K$ contains the free
algebra of rank 2, $k\langle y_1, y_2\rangle$. Therefore $K$ has
exponential growth.

(b) By definition, $\Delta$ is compatible with filtrations defined
in Remark \ref{xxrem1.2}. Hence
$\Delta$ is a homomorphism of filtered algebras. So
every group-like element must have total $y$-degree 0.
The assertion follows.

(c) Let $F=\sum c_{h,(i_s)} hy_{i_1}y_{i_2}\cdots y_{i_n}$ with
coefficients $c_{h,(i_s)}\neq 0$.
 For simplicity, let $ldt$ denote any linear combination of
monomials of total $y$-degree less than $z$. Then we can write
$F=\sum c_{h,(i_s)} hy_{i_1}y_{i_2}\cdots y_{i_z}+ldt$.
Since $\Delta(F)=F\otimes 1+\mu(F)\otimes F$, $h=1$ for
terms with total degree $z$. Pick any term of $y$-degree $z$ in $F$,
say $c_{1,(i_s)} y_{i_1}y_{i_2}\cdots y_{i_z}$, and let
$(N_1,\cdots, N_v)$ be its multi-$y$-degree.

Since $F$ is skew primitive, $S(F)=-\mu(F)^{-1} F$.
Since $S(y_i)= -\mu_i^{-1} y_i$, we have
$$\begin{aligned}
S(c_{1,(i_s)}y_{i_1}\cdots y_{i_z})&=c_{1,(i_s)} (-\mu_{i_z}^{-1}
y_{i_z})\cdots (-\mu_{i_1}^{-1} y_{i_1})\\
&=c_{1,(i_s)} (-1)^z \prod_{s>t} \lambda_{i_s i_t}^{-1} \mu^{-1}
y _{i_z}\cdots y_{i_1}+ldt
\end{aligned}
$$
where $\mu=\prod_{s=1}^z \mu_{i_s}$. Since $S(F)=-\mu(F)^{-1} F$,
$\mu=\mu(F)$ and $F$ contains a nonzero term of the form
$c'_{(i_s)}y _{i_z}\cdots y_{i_1}$. The same computation shows that
$$\begin{aligned}
S(c'_{(i_s)}y_{i_z}\cdots y_{i_1})&=c'_{(i_s)} (-\mu_{i_1}^{-1} y_{i_1})\cdots
(-\mu_{i_z}^{-1} y_{i_z})\\
&=c'_{(i_s)} (-1)^z \prod_{a<b} \lambda_{i_a i_b}^{-1} \mu^{-1}
y _{i_1}\cdots y_{i_z}+ldt.
\end{aligned}
$$
Comparing the coefficients in the terms $\mu^{-1}y _{i_z}\cdots y_{i_1}$
and $\mu^{-1}y _{i_1}\cdots y_{i_z}$ in the equation $S(F)=-\mu^{-1}F$,
we have
$$-c'_{(i_s)}=c_{1,(i_s)} (-1)^z\prod_{s>t} \lambda_{i_s i_t}^{-1},
\quad
-c_{1,(i_s)}=c'_{(i_s)}(-1)^z \prod_{a<b} \lambda_{i_a i_b}^{-1}.$$
Since $c_{1,(i_s)}$ and $c'_{(i_s)}$ are nonzero, the above two equations
imply
$$\prod_{s>t} (\lambda_{i_s i_t}\lambda_{i_t i_s})=1$$
or
\begin{equation}
\label{I3.1.1}\tag{I3.1.1}
\prod_{\{s\neq t\}\subset \{1,2,\cdots,z\}}
(\lambda_{i_s i_t}\lambda_{i_t i_s})=1.
\end{equation}
We know the monomial $y_{i_1}\cdots y_{i_n}$ contains $N_i$ copies of
$y_i$ for all $i=1,\cdots,v$. Hence equation \eqref{I3.1.1} is in fact
$$\prod_{i=1}^v (\lambda_{ii})^{N_i(N_i-1)}\prod_{i<j}
(\lambda_{ij}\lambda_{ji})^{N_iN_j}=1.$$
\end{proof}

There is a slight modification of Lemma \ref{xxlem3.1}. Suppose
$\lambda_{11}$ is a primitive $p_1$-th root of unity for some $p_1>1$.
Recycle most of the notations before Lemma \ref{xxlem3.1}.
Let $L:=L(\Lambda,\mu, p_1)$ be the Hopf algebra generated as an algebra by the
abelian group $G_0$ and $y_1,\cdots,y_v$ subject to the relations
within $G_0$ and the following additional relations between $G_0$
and $\{y_i\}_{s=1}^v$
$$\begin{aligned}
y_i g &=\lambda_{i}(g) g y_i+\tau_i(g) g(\mu_i-1),
{\text{ for all $i$ and all $g\in G_0$}},\\
y_1^{p_1}&=\beta(\mu_1^{p_1}-1),
\quad {\text{ for some $\beta\in k$}}.
\end{aligned}
$$
The coalgebra structure of $L$ is determined by
$$\begin{aligned}
\Delta(g)=g\otimes g, &\quad
\epsilon(g)=1, \quad {\text{for all $g\in G_0$}},\\
\Delta(y_i)=y_i\otimes 1+\mu_i\otimes y_i, &\quad \epsilon(y_i)=0,
\quad {\text{for all $i=1,\cdots,v$}}.
\end{aligned}
$$
And the antipode of $L$ is determined by
$$\begin{aligned}
S(g)&=g^{-1}, \quad {\text{for all $g\in G_0$}},\\
S(y_i)&=-\mu_i^{-1} y_i,\quad {\text{for all $i=1,\cdots,v$}}.
\end{aligned}
$$

Define $\Lambda_M:=(\lambda_{ij})=(\lambda_i(\mu_j))$.
The total $y$-degree and the multi-$y$-degree are defined
as before.

\begin{lemma}
\label{xxlem3.2} Let $L:=L(\Lambda,\mu,p_1)$ be defined as above.
Suppose either $\beta=0$ or $\lambda_1(g)^{p_1}=1$ for all
$g\in G_0$.
\begin{enumerate}
\item
$L$ has a $k$-linear basis
$$\{gy_{i_1}y_{i_2}\cdots y_{i_s}\}$$
where $g\in G_0$, $i_1,\cdots, i_s\in \{1,\cdots, v\}$
and there is no $u$ such that $i_u=i_{u+1}=
\cdots i_{u+p_1-1}=1$. As a consequence, $L$ contains a free
subalgebra $k\langle y_1y_2, y_1 y_2^2\rangle$ and has exponential
growth.
\item
The coradical of $L$ is $kG_0$.
\item
If $F$ is a skew primitive element of total $y$-degree $z\ge 2$,
then for any term of $F$ with multi-$y$-degree $(N_1,\cdots, N_v)$
and $\sum_i N_i=z$,
$$\prod_{i=1}^v (\lambda_{ii})^{N_i(N_i-1)}\prod_{i<j}
(\lambda_{ij}\lambda_{ji})^{N_iN_j}=1.$$
\end{enumerate}
\end{lemma}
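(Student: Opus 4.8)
The plan is to prove all three parts in parallel with Lemma~\ref{xxlem3.1}, treating the new relation $y_1^{p_1}=\beta(\mu_1^{p_1}-1)$ as the only genuinely new ingredient. For part (a) I would invoke Bergman's Diamond Lemma \cite[Theorem 1.2]{Be} with the reduction system consisting of the reductions inside $kG_0$, the rule $y_ig\mapsto \lambda_i(g)gy_i+\tau_i(g)g(\mu_i-1)$ moving each group element to the left of every $y_i$, and the new rule $y_1^{p_1}\mapsto \beta(\mu_1^{p_1}-1)$. The claimed normal forms are exactly the monomials $gy_{i_1}\cdots y_{i_s}$ with no block of $p_1$ consecutive indices equal to $1$, so it suffices to check that all overlap ambiguities are resolvable; the ambiguities not involving the new rule are settled as in Lemma~\ref{xxlem3.1}(a).

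The two ambiguities involving the new rule are $y_1^{p_1}\cdot y_1$ (overlap of $y_1^{p_1}$ with itself in $y_1^{p_1+1}$) and $y_1^{p_1}\cdot g$ (overlap of $y_1^{p_1}$ with $y_1g$). For the first I would use that $\lambda_{11}=\lambda_1(\mu_1)$ being a primitive $p_1$-th root of unity with $p_1>1$ forces $\lambda_1$ to be nontrivial, hence $\tau_1=0$ and $y_1g=\lambda_1(g)gy_1$; since $\lambda_1(\mu_1^{p_1})=\lambda_{11}^{p_1}=1$, both ways of reducing $y_1^{p_1+1}$ give $\beta(\mu_1^{p_1}-1)y_1$. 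For the second, reducing $y_1^{p_1}$ first yields $\beta(\mu_1^{p_1}-1)g$, whereas commuting $g$ leftward through all $p_1$ copies of $y_1$ and then reducing yields $\lambda_1(g)^{p_1}\beta(\mu_1^{p_1}-1)g$ up to lower $y$-degree terms; these agree precisely when $\beta(\lambda_1(g)^{p_1}-1)=0$, which is exactly the stated hypothesis. Granting the basis, the free subalgebra claim follows by checking that distinct words in $y_1y_2$ and $y_1y_2^2$ map to distinct normal-form monomials (the $y_2$-runs between consecutive $y_1$'s, of length $1$ or $2$, recover the word), so $k\langle y_1y_2,y_1y_2^2\rangle$ is free of rank $2$ and $L$ has exponential growth.

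For part (b) I would reuse the filtration argument of Lemma~\ref{xxlem3.1}(b). The only point to note is that $y_1^{p_1}=\beta(\mu_1^{p_1}-1)$ is a filtered rather than a graded relation, so $\Delta$ is still a homomorphism of $\mathbb{N}$-filtered algebras for the total $y$-degree filtration, and by part (a) the associated graded algebra has a monomial basis. Hence if $g$ is group-like of total $y$-degree $d$, comparing top degrees in $\Delta(g)=g\otimes g\in F_d(L\otimes L)$ forces $2d\le d$, so $d=0$ and $g\in kG_0$; since $L$ is generated by group-likes and skew primitives it is pointed, giving coradical $kG_0$.

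For part (c) the computation is identical to Lemma~\ref{xxlem3.1}(c): for a skew primitive $F$ of total $y$-degree $z$ I would apply $S(F)=-\mu(F)^{-1}F$, compute $S$ on a top-degree normal-form term $y_{i_1}\cdots y_{i_z}$ and on its reversal $y_{i_z}\cdots y_{i_1}$, and compare the coefficients of $\mu^{-1}y_{i_z}\cdots y_{i_1}$ and $\mu^{-1}y_{i_1}\cdots y_{i_z}$. Here I would only need to remark that reversal preserves the ``no $p_1$ consecutive $y_1$'s'' condition, so the reversed monomial is again a basis element, and that each application of $y_1^{p_1}=\beta(\mu_1^{p_1}-1)$ strictly lowers the total $y$-degree, so it contributes only to the $ldt$ terms and does not affect the degree-$z$ coefficient comparison. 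The two coefficient relations then combine exactly as in \eqref{I3.1.1} to give $\prod_i(\lambda_{ii})^{N_i(N_i-1)}\prod_{i<j}(\lambda_{ij}\lambda_{ji})^{N_iN_j}=1$. I expect the confluence check in part (a), specifically the $y_1^{p_1}\cdot g$ ambiguity, to be the main obstacle and the place where the hypothesis on $\beta$ and $\lambda_1^{p_1}$ is indispensable.
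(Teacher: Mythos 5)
Your proposal is correct and follows exactly the route the paper intends: Lemma \ref{xxlem3.2} is stated as a ``slight modification'' of Lemma \ref{xxlem3.1} with the proof omitted, and your argument is precisely the analogous one (Diamond Lemma for (a), the total $y$-degree filtration for (b), the antipode coefficient comparison for (c)), augmented with the genuinely new checks the paper leaves implicit. In particular, your resolution of the $y_1^{p_1}\cdot g$ overlap ambiguity correctly locates where the hypothesis ``$\beta=0$ or $\lambda_1(g)^{p_1}=1$ for all $g\in G_0$'' is needed, your observation that $\tau_1=0$ because $\lambda_{11}\neq 1$ forces $\lambda_1$ to be nontrivial is exactly right, and your remarks in (c) that reversal preserves the no-$p_1$-consecutive-$y_1$'s condition and that the relation $y_1^{p_1}=\beta(\mu_1^{p_1}-1)$ only contributes lower-degree terms are the correct adaptations.
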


\begin{proposition}
\label{xxprop3.3}
Let $H$ be a Hopf algebra and $y_1,y_2$ be two skew primitive
elements linearly independent in $H/C_0$. Suppose that
\begin{enumerate}
\item[]
\begin{enumerate}
\item[(i)]
there is a group-like element $x$ and $d_1,d_2\in {\mathbb Z}$ such that
$\mu(y_i)=x^{d_i}$ for $i=1,2$, and 
\item[(ii)]
there are two scalars $q_1,q_2\in k^{\times}$ such that
$y_i x=q_i xy_i$ for $i=1,2$.
\end{enumerate}
\end{enumerate}
\begin{enumerate}
\item
If $x$ has infinite order and $H$ does not contain a free algebra of 
rank 2, then
\begin{equation}
\label{I3.3.1}\tag{I3.3.1}
q_1^{d_1(M_1(M_1-1))+d_2(M_1M_2)}
q_2^{d_2(M_2(M_2-1))+d_1(M_1M_2)}=1
\end{equation}
for some integers  $M_1,M_2\geq 0$ satisfying $M_1+M_2\geq 2$.
\item
Assume that one of the following holds:
\begin{enumerate}
\item[(1)]
$q_1=q_2$ is not a root of unity and $d_1d_2> 0$;
\item[(2)]
$q_1^{d_1}=1$ and $q_2$ is not a root of unity and $d_1d_2> 0$;
\item[(3)]
$q_1^{d_1}\neq 1$ is a root of unity and $q_2$ is not a root of unity and
$d_1d_2>0$;
\item[(4)]
the group $\langle q_1, q_2\rangle\subset k^\times$ is free abelian
of rank 2, $d_1d_2\neq 0$.
\end{enumerate}
Then $H$ contains a free subalgebra of rank 2. Consequently, $H$ has
exponential growth.
\end{enumerate}
\end{proposition}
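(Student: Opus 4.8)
The plan is to analyze the two skew primitive elements $y_1,y_2$ together with the group-like $x$ by building an auxiliary Hopf algebra of the form $K(\Lambda,\mu)$ (or $L(\Lambda,\mu,p_1)$) introduced before Lemma \ref{xxlem3.1}, and then transferring information back to $H$. For part (a), the relation $y_ix=q_ixy_i$ together with $\mu(y_i)=x^{d_i}$ tells us that the subalgebra generated by $x^{\pm1},y_1,y_2$ is a quotient of such a $K$, with $G_0=\langle x\rangle$ infinite cyclic and $\lambda_{ij}=\lambda_i(\mu_j)=\lambda_i(x^{d_j})=q_i^{d_j}$. If $H$ contains no free algebra of rank $2$, then by Lemma \ref{xxlem3.1}(a) this subalgebra is not free, which forces a nontrivial relation; the only way a relation can appear (given the PBW-type basis of Lemma \ref{xxlem3.1}(a)) is if some monomial becomes skew primitive, so there must exist a skew primitive element $F$ of total $y$-degree $z\ge2$. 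Applying the constraint in Lemma \ref{xxlem3.1}(c) to a term of $F$ with multi-degree $(N_1,N_2)=(M_1,M_2)$ yields
$$
(\lambda_{11})^{M_1(M_1-1)}(\lambda_{22})^{M_2(M_2-1)}(\lambda_{12}\lambda_{21})^{M_1M_2}=1.
$$
Substituting $\lambda_{11}=q_1^{d_1}$, $\lambda_{22}=q_2^{d_2}$, $\lambda_{12}=q_1^{d_2}$, $\lambda_{21}=q_2^{d_1}$ and collecting powers of $q_1$ and $q_2$ reproduces exactly \eqref{I3.3.1}. This is the clean computational core of part (a).

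\textbf{Part (b): proving freeness.} For part (b) I would argue by contraposition: assume $H$ contains no free subalgebra of rank $2$ and derive a contradiction in each case (1)--(4). The key point is that $x$ must have infinite order under all four hypotheses: in cases (1)--(3) one of $q_1,q_2$ is not a root of unity, and since $\mu(y_i)=x^{d_i}$ with $d_1d_2>0$ (so both $d_i$ nonzero) the conjugation relation forces the order of $x$ to be infinite, while in case (4) the freeness of $\langle q_1,q_2\rangle$ similarly forbids $x$ having finite order. Thus part (a) applies and we obtain \eqref{I3.3.1} for some $M_1,M_2\ge0$ with $M_1+M_2\ge2$. I would then show that \eqref{I3.3.1} is \emph{impossible} under each of the four sets of hypotheses, by examining the exponents of $q_1$ and $q_2$ separately.

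\textbf{Ruling out the relation.} Rewrite \eqref{I3.3.1} as $q_1^{A}q_2^{B}=1$ where $A=d_1M_1(M_1-1)+d_2M_1M_2$ and $B=d_2M_2(M_2-1)+d_1M_1M_2$. The crucial observation is that $A=M_1\bigl(d_1(M_1-1)+d_2M_2\bigr)$ and $B=M_2\bigl(d_2(M_2-1)+d_1M_1\bigr)$, and since $d_1,d_2$ have the same sign (cases (1)--(3)) or both are nonzero with $\langle q_1,q_2\rangle$ free (case (4)), neither $A$ nor $B$ can vanish unless the corresponding $M_i=0$. In case (4), $q_1^Aq_2^B=1$ with $\langle q_1,q_2\rangle$ free abelian of rank $2$ forces $A=B=0$; since $d_1d_2\neq0$ and the bracketed factors are strictly positive whenever the other $M_j\ge0$, this forces $M_1=M_2=0$, contradicting $M_1+M_2\ge2$. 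In cases (1)--(3), one handles the power of the non-root-of-unity generator: e.g. in case (1) with $q_1=q_2=q$ not a root of unity, \eqref{I3.3.1} gives $q^{A+B}=1$, hence $A+B=0$, but $A+B=d_1M_1(M_1-1)+d_2M_2(M_2-1)+(d_1+d_2)M_1M_2>0$ when $d_1d_2>0$ and $M_1+M_2\ge2$. Cases (2) and (3) are similar but use $q_1^{d_1}=1$ (resp. $q_1^{d_1}$ a root of unity) to kill or control the $q_1$-contribution, reducing \eqref{I3.3.1} to a nontrivial power of $q_2$ being $1$, which is impossible since $q_2$ is not a root of unity. The main obstacle I anticipate is the careful bookkeeping in cases (2) and (3), where $\lambda_{11}=q_1^{d_1}$ being a root of unity means the auxiliary algebra should be modeled on $L(\Lambda,\mu,p_1)$ rather than $K$, so that Lemma \ref{xxlem3.2}(c) (rather than \ref{xxlem3.1}(c)) supplies the constraint and one must verify the hypothesis ``$\beta=0$ or $\lambda_1(g)^{p_1}=1$'' of that lemma holds in our setting; once the correct lemma is invoked the arithmetic contradiction is routine.
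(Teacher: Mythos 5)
Your part (a) and cases (b1), (b4) follow the paper's route exactly: form $K(\Lambda,\mu)$ with $G_0=\langle g\rangle\cong{\mathbb Z}$, note the surjection $\phi\colon K\to H_0$ cannot be injective if $H$ has no free subalgebra of rank $2$, extract a skew primitive $F\in\ker\phi$ of total $y$-degree $z\ge 2$, and feed a top term into Lemma \ref{xxlem3.1}(c). (Two small points there: the correct justification that $\ker\phi$ contains a nonzero skew primitive is \cite[Theorem 5.3.1]{Mo} applied to the pointed coalgebra $K$ — it is not true that ``some monomial becomes skew primitive'' — and injectivity of $\phi$ on $y$-degree $\le 1$ skew primitives uses the infinite order of $x$ as well as the independence of $y_1,y_2$ in $H/C_0$. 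Also, in case (b4) your claim that the bracketed factors are ``strictly positive'' fails when $d_1d_2<0$; one instead checks directly that $A=B=0$ with $M_1,M_2\ge 1$ forces $d_1=d_2$ and then $M_1+M_2=1$, so the conclusion still holds.) Your direct argument that $x$ has infinite order, via $x^N=1\Rightarrow q_i^N=1$, is fine and even more elementary than the paper's appeal to Lemma \ref{xxlem2.2}(d).

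The genuine gap is in cases (b2) and (b3), where your assertion that ``once the correct lemma is invoked the arithmetic contradiction is routine'' is false: the arithmetic does \emph{not} close. Writing $B=d_2M_2(M_2-1)+d_1M_1M_2=M_2\bigl(d_2(M_2-1)+d_1M_1\bigr)$, killing the $q_1$-contribution only yields $B=0$, and the solution $M_2=0$, $M_1=z\ge 2$ survives: then the $q_2$-exponent is zero (there is no ``nontrivial power of $q_2$''), while $q_1^{d_1z(z-1)}=1$ holds automatically since $q_1^{d_1}=1$ (case (2)) or is a root of unity (case (3)), so \eqref{I3.3.1} is satisfied and yields no contradiction. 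The paper eliminates this surviving branch by a separate \emph{coalgebra} argument: the offending $F$ must have the form $cy_1^{z}+ldt$, and by Lemma \ref{xxlem1.3}(a) such an element cannot be skew primitive because the coefficients ${z \choose s}_{\lambda_{11}}$ do not vanish when $\lambda_{11}=1$, respectively when $z<p_1$ and $\lambda_{11}$ is a primitive $p_1$-th root of unity. Moreover, in case (3) the passage to $L(\Lambda,\mu,p_1)$ that you propose is only legitimate when the relation $y_1^{p_1}=\beta(\mu_1^{p_1}-1)$ actually holds in $H$, i.e.\ when $y_1^{p_1}\in C_0$ (Lemma \ref{xxlem1.6}); the paper therefore splits case (3) into a dichotomy you omit: if $y_1^{p_1}\notin C_0$, one checks $\{y_1^{p_1},y_2\}$ is linearly independent in $H/C_0$ (using $q_1^{p_1}\neq q_2$) and reduces to case (2) applied to this new pair, and only if $y_1^{p_1}\in C_0$ does one replace $K$ by $L$, where the bound $z<p_1$ from the basis in Lemma \ref{xxlem3.2}(a) makes the $q$-binomial argument bite. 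Without the $M_2=0$ analysis and this dichotomy, your proof of (b2) and (b3) does not go through.
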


\begin{proof} (a) Let $\mu_i=x^{d_i}$ and $\lambda_{ij}=q_i^{d_j}$. Then
$y_i \mu_j=\lambda_{ij} \mu_j y_i$ and $\gamma(y_i)=\lambda_{ii}$ 
for all $i,j\in \{1,2\}$. 

Let $H_0$ be the Hopf subalgebra generated by $x,x^{-1},y_1,y_2$.
Let $G_0=\langle g \rangle\cong {\mathbb Z}$ and 
let $\lambda_i(g^n)= q_i^n$ for $i=1,2$ and all $n$.
Let $\Lambda=\{\lambda_1,\lambda_2\}$ and $\mu=\{g,g\}$. 
Then there is a surjective Hopf algebra homomorphism 
$\phi: K:=K(\Lambda, \mu)\to H_0$ sending $g\mapsto x$
and $y_i\mapsto y_i$ for $i=1,2$, where we choose $\tau_i=0$. 
By Lemma \ref{xxlem3.1}(a) $K$ contains a free algebra of rank 2.  
If $H$ does not contain a free algebra of rank 2,
then $K\to H_0$ is not injective. By \cite[Theorem 5.3.1]{Mo},
there is a nonzero skew primitive element $F\in K$ such that
$\phi(F)=0$. Since $\phi$ is injective on skew primitive elements
of $y$-degree $\le 1$, $F$ has total $y$-degree $z\ge 2$. By Lemma
\ref{xxlem3.1}(c), for any term of $F$ with multi-$y$-degree $(M_1,M_2)$
and $M_1+M_2= z$, we have the following, 
$$
(\lambda_{11})^{M_1(M_1-1)}(\lambda_{22})^{M_2(M_2-1)}
(\lambda_{12}\lambda_{21})^{M_1M_2}=1
$$
or equivalently,
$$(q_1^{d_1})^{M_1(M_1-1)}(q_{2}^{d_2})^{M_2(M_2-1)}
(q_1^{d_2}q_2^{d_1})^{M_1 M_2}=1.$$
This can be simplified to
$$
q_1^{d_1(M_1(M_1-1))+d_2(M_1M_2)}
q_2^{d_2(M_2(M_2-1))+d_1(M_1M_2)}=1
$$
which is \eqref{I3.3.1}.

(b) Assume $H$ does not contain a free algebra of rank 2 and 
we will obtain a contradiction. If one of the hypotheses holds, then 
$x$ has infinite order in $G(H)$
by Lemma \ref{xxlem2.2}(d). Therefore we can apply part (a). 

In case (b1), \eqref{I3.3.1} implies that 
$$d_1(M_1(M_1-1))+d_2(M_1M_2)+
d_2(M_2(M_2-1))+d_1(M_1M_2)=0.$$
This is impossible since $d_1d_2>0$ and $M_1+M_2\geq 2$.
Therefore $H$ contains a free algebra of rank 2.

A similar argument works for case (b4).

In case (b2), equation \eqref{I3.3.1} implies that
$(q_2^{d_2(M_2(M_2-1))+d_1(M_1M_2)})^{d_1}=1$, or
$$d_2(M_2(M_2-1))+d_1(M_1M_2)=0$$
because $q_2$ is not a root of unity.
Since $d_1d_2>0$, the only solution is $M_2=0$ and $M_1=z\geq 2$.
Thus we have $F=cy_1^{z}+ldt$ for some $c\in k^{\times }$.
By Lemma \ref{xxlem1.3} and the fact that $\lambda_{11}=q_1^{d_1}=1$,
$F$ cannot be skew primitive for any $z\geq 2$. So case (b2) has
been taken care of.

It remains to consider case (b3). Suppose $q_1^{d_1}$ is a primitive
$p_1$-th root of unity. Then $y_1^{p_1}$ is a skew primitive element.
If $y_1^{p_1} \not\in C_0$, then $\{y_1^{p_1},y_2\}$
is linearly independent in $H/C_0$. Note that if $\alpha y_1^{p_1}+\beta
y_2\in C_0$, then $x^{-1}(\alpha y_1^{p_1}+\beta
y_2)x \in C_0$, which would imply $y_1^{p_1}, y_2\in C_0$ because
$q_1^{p_1}\neq q_2$.
The assertion follows from case (b2) applied to $\{y_1^{p_1},y_2\}$.
If  $y_1^{p_1}\in C_0$, then $y_1^{p_1}=\beta(\mu_1^{p_1}-1)$
for some $\beta$. Replacing $K$ by $L$ in the above argument, 
\eqref{I3.3.1} holds again.

Since $q_1$ is a root of unity, we have
$$ (q_2^p)^{d_2(M_2(M_2-1))+d_1(M_1M_2)}=1$$
for some $p>1$. Since $q_2$ is not a root of unity,
$$d_2(M_2(M_2-1))+d_1(M_1M_2)=0.$$
Since $d_1d_2>0$, the only solution is $M_2=0$ and $M_1=z\geq 2$.
Now $F=cy_1^{z}+ldt$, where $c\in k^{\times }$ and $z< p_1$.
By Lemma \ref{xxlem1.3}, $F$ cannot be skew primitive. 
This is a contradiction. 
\end{proof}

%
%
%
%

\begin{corollary}
\label{xxcor3.4}
Suppose $H$ has subexponential growth. Let $y$ be a skew primitive 
element not in $C_0$ such that $\gamma(y)$ is defined and is not a root of unity. 
Let $G_0$ be a finitely generated abelian subgroup of $G(H)$ 
containing $\mu(y)$ (which has infinite order automatically). Then 
$V:=k(\mu(y)-1)+\sum_{g\in G_0} k(g^{-1} yg)$ is 2-dimensional. As 
a consequence, there is a group representation $\lambda: G_0\to k$ 
such that 
$$g^{-1} y' g=\lambda(g)y'$$
for all $g\in G_0$, where $y'=y+\alpha(\mu(y)-1)$ for some 
$\alpha\in k$.
\end{corollary}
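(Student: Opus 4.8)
The plan is to normalise $y$ so that conjugation by $x:=\mu(y)$ acts diagonally, and then to show that \emph{every} $G_0$-conjugate of the normalised element is a scalar multiple of it; the leverage is that any extra independent conjugate would, via Proposition \ref{xxprop3.3}, force a free subalgebra of rank $2$ and hence exponential growth. First I would record that subexponential growth means $H$ contains no free subalgebra of rank $2$, and that $x=\mu(y)$ has infinite order by Lemma \ref{xxlem2.2}(d). Writing $q:=\gamma(y)$, which is $\neq 1$ since it is not a root of unity, equation \eqref{I0.3.3} lets me put $y':=y+\alpha(x-1)$ with $\alpha=(q-1)^{-1}\tau$, so that $x^{-1}y'x=qy'$; note $y'\notin C_0$, $\mu(y')=x$ and $\gamma(y')=q$. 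Since $g^{-1}(x-1)g=x-1$ for $g\in G_0$, replacing $y$ by $y'$ does not change $V$.

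Next, for each $g\in G_0$, abelianness of $G_0$ with $x\in G_0$ means $g$ commutes with $x$, so Lemma \ref{xxlem2.2}(a) shows $g^{-1}y'g$ is $(1,x)$-primitive, and $x^{-1}(g^{-1}y'g)x=g^{-1}(x^{-1}y'x)g=q\,(g^{-1}y'g)$. The heart of the argument is the claim that $g^{-1}y'g\in ky'+k(x-1)$ for every $g$. Suppose this fails for some $g$. Since every element of $V$ is $(1,x)$-primitive, Lemma \ref{xxlem1.6} gives $V\cap C_0=k(x-1)$, so the failure forces $\{y',\,g^{-1}y'g\}$ to be linearly independent in $H/C_0$. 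I then apply Proposition \ref{xxprop3.3} to this pair with the group-like element $x$, exponents $d_1=d_2=1$ (as $\mu(y')=\mu(g^{-1}y'g)=x$), and scalars $q_1=q_2=q$ coming from $y'x=qxy'$: we land in case (b1), whence $H$ contains a free subalgebra of rank $2$, contradicting subexponential growth. Thus the claim holds.

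Finally, writing $g^{-1}y'g=\lambda_g y'+\tau_g(x-1)$ and conjugating once more by $x$ gives $q\lambda_g y'+q\tau_g(x-1)=q\lambda_g y'+\tau_g(x-1)$; comparing coefficients of the independent elements $y'$ and $x-1$ yields $(q-1)\tau_g=0$, so $\tau_g=0$ since $q\neq1$. Hence $g^{-1}y'g=\lambda_g y'$ with $\lambda_g\in k^\times$, and $g\mapsto\lambda_g$ is a homomorphism $\lambda\colon G_0\to k^\times$. Consequently each generator of $V$ lies in $ky'+k(x-1)=ky+k(x-1)$, so $V=ky+k(x-1)$ is $2$-dimensional because $y\notin C_0$, and the displayed relation $g^{-1}y'g=\lambda(g)y'$ is exactly the asserted conclusion. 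I expect the only real subtlety to be the bookkeeping that lets me invoke Proposition \ref{xxprop3.3}: checking that the two primitives are genuinely independent modulo $C_0$ (not merely modulo $k(x-1)$, which is where Lemma \ref{xxlem1.6} enters) and that its hypotheses $\mu(y_i)=x^{d_i}$ and $y_ix=q_ixy_i$ hold with $d_1d_2>0$. Everything after that is the routine eigenvalue computation removing the $(x-1)$-term.
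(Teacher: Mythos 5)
Your proof is correct and follows essentially the same route as the paper: normalise $y$ so that $x^{-1}y'x=\gamma(y)y'$, apply Proposition \ref{xxprop3.3}(b1) (with $d_1=d_2=1$, $q_1=q_2=\gamma(y)$) to rule out linear independence of $y'$ and $g^{-1}y'g$ in $H/C_0$, and use Lemma \ref{xxlem1.6} to get $\dim V=2$. The only cosmetic difference is that you obtain the relation $g^{-1}y'g=\lambda(g)y'$ by a direct eigenvalue computation eliminating the $\tau_g(x-1)$ term, whereas the paper cites Lemma \ref{xxlem2.2}(ci); both are fine, and your version has the small merit of pinning down the same $y'$ explicitly.
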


\begin{proof} Since $\gamma(y)$ is not a root of unity, we may assume
that $y \mu(y)=\gamma(y) \mu(y) y$ after replacing $y$ by
$y+\alpha(\mu(y)-1)$ for some $\alpha\in k$. Let $g\in G_0$. Let 
$y_1=y$ and $y_2=g^{-1}yg$. Then $\gamma(y_1)=\gamma(y_2)$ and it is 
not a root of unity. By Proposition \ref{xxprop3.3}(b1), $y_1$ and 
$y_2$ are not linearly independent in $H/C_0$. The assertion that $\dim V=2$ follows 
by applying Lemma \ref{xxlem1.6}. The consequence follows from 
Lemma \ref{xxlem2.2}(ci). 
\end{proof}

\begin{proof}[Proof of Proposition \ref{xxprop0.5}]
We prove the assertion by contradiction. So we assume that
$H$ has subexponential growth.

Pick a pair of skew primitive elements $(y_1,y_2)$ such that the 
subgroup of $\langle\Gamma\rangle$ generated by $\{\gamma(y_1),
\gamma(y_2)\}$ has rank $2$. Let $\lambda_{ii}=\gamma(y_i)$ and 
$g_i=\mu(y_i)$ for $i=1,2$. Since $\langle W\rangle$ is abelian, 
the subgroup $G_0:=\langle g_1, g_2\rangle$ is abelian. By 
Corollary \ref{xxcor3.4}, we may further assume that, for 
$i=1,2$ and $j=1,2$, $g_j^{-1} y_i g_j=\lambda_{ij} y_i$ for 
some $\lambda_{ij}\in k^\times$. Since $H$ does not contain a 
free subalgebra of rank $2$, the proofs of Proposition 
\ref{xxprop3.3} and \eqref{I3.3.1} show that
\begin{equation}
\label{I3.4.1}
\tag{I3.4.1}
(\lambda_{11})^{M_1(M_1-1)}
(\lambda_{22})^{M_2(M_2-1)}
(\lambda_{12}\lambda_{21})^{M_1M_2}=1
\end{equation}
for some non-negative $M_1,M_2$ with $M_1+M_2\geq 2$.

Since $G_0$ is a finitely generated subgroup of $\langle W\rangle$ 
and since $\langle W\rangle$ is abelian and torsionfree of rank 1, 
$G_0$ is isomorphic to ${\mathbb Z}$. Therefore there is an 
$x\in G_0$ such that $g_1=x^a$ and $g_2=x^b$ for some nonzero 
integers $a,b$. Consequently, $g_2^a=g_1^b$. Thus the equation 
$g_j^{-1} y_i g_j=\lambda_{ij} y_i$ implies that $\lambda_{i2}^a
=\lambda_{i1}^b$ for all $i=1,2$. Then \eqref{I3.4.1} implies that
$$(\lambda_{11})^{ab M_1(M_1-1)}
(\lambda_{22})^{ab M_2(M_2-1)}
(\lambda_{11})^{b^2 M_1 M_2}(\lambda_{22})^{a^2 M_1M_2}=1$$
Since the rank of $\langle \lambda_{11}, \lambda_{22}\rangle$ is 2,
we have
$$ab M_1(M_1-1)+b^2 M_1M_2=
ab M_2(M_2-1)+a^2 M_1M_2=0$$
or
$$a (M_1-1)+b M_2=b (M_2-1)+a M_1=0.$$
Since $a,b$ are nonzero, this means that $(M_1-1)(M_2-1)-M_1M_2=0$.
This is impossible when $M_1+M_2\geq 2$, which yields a
contradiction.
\end{proof}

The rest of this section is devoted to the proof of Theorem
\ref{xxthm0.4}. The next definition was given in the introduction,
but maybe it should be reviewed here. Let
$y$ be a $(1,g)$-primitive element in a Hopf
algebra $H$. Let $T_{g^{-1}}$ denote the inverse conjugation by
$g$, namely, $T_{g^{-1}}:  a\to g^{-1} ag$.

\begin{definition}
\label{xxdefn3.5}
Let $y$ be a $(1,g)$-primitive element of $H$ not in $C_0$. A nonzero
scalar $\lambda$ is called the commutator of $y$ of level $n$ if
$(T_{g^{-1}}-\lambda Id_H)^n(y)\in C_0$ and
$(T_{g^{-1}}-\lambda Id_H)^{n-1}(y)\not\in C_0$. In this case we write
$\gamma(y)=\lambda$. When $n=1$, $\gamma(y)$ is the commutator of $y$
defined as in \eqref{I0.3.2} or equivalently in \eqref{I0.3.3}.
\end{definition}

In general the commutator of $y$ may not exist. We also need a
generalization of Definition \ref{xxdefn3.5}. Recall that 
$W_{\times}$ is the subset of $W$ consisting of weights $\mu(y)$ 
such that the commutator of $y$ is either 1 or not a root of unity. 
Throughout the rest of the section let $G_0$ be the 
subgroup $\langle W_{\times}\rangle$ and suppose that $G_0$ is
abelian. A 1-dimensional representation of $G_0$ is equivalent to
a multiplicative map $\lambda: G_0\to k^{\times}$. Let $G_0^*$
denote the set of 1-dimensional representations of $G_0$, which
is also called the character group of $G_0$.

\begin{definition}
\label{xxdefn3.6}
Let $y$ be a skew primitive element in $H\setminus C_0$ and let
$\lambda\in G_0^*$. We say $\lambda$ is the {\it generalized
commutator} of $y$ of level $n$ if there is an $n$ such that
$(T_{g^{-1}}-\lambda(g) Id_H)^n (y)\in C_0$ for all $g\in G_0$ and
$(T_{g^{-1}}-\lambda(g) Id_H)^{n-1} (y)\not\in C_0$ for some $g\in G_0$.
If $n=1$, $\lambda$ is called the {\it generalized commutator} of
$y$.
\end{definition}

\begin{lemma}
\label{xxlem3.7} 
In parts (a) and (c) suppose that $\GKdim H<\infty$.
\begin{enumerate}
\item
Every skew primitive element $y\in H$ is a linear combination of
skew primitives with commutator of finite level and weight $\mu(y)$.
\item
Suppose $V$ is a conjugation $G_0$-stable finite dimensional
subspace spanned by skew primitives with their weights in $G_0$. 
Then every element $y\in V$ is a linear combination of skew 
primitives with generalized commutator of finite level.
\item
Every skew primitive $y\in H$ with commutator of level 1 such that
$\gamma(y)\in \Gamma\setminus \Gamma_{\sqrt{\;}}$
is a linear combination of skew primitives with generalized
commutator of finite level. Further, each nonzero summand of the 
linear combination has weight commutator $\omega(y)$. 
\end{enumerate}
\end{lemma}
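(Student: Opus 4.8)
The plan is to handle all three parts by one linear-algebra mechanism: after fixing a weight, the relevant conjugation operators act on a \emph{finite-dimensional} space of skew primitives modulo $C_0$, and since $k$ is algebraically closed one decomposes into generalized eigenspaces (for a single operator) or generalized simultaneous weight spaces (for a commuting family). The common setup is as follows. For a group-like $g$ let $P_g$ denote the space of $(1,g)$-primitive elements; by Lemma \ref{xxlem1.6}, $P_g\cap C_0=k(g-1)$. A direct coproduct computation shows that $T_{g^{-1}}\colon a\mapsto g^{-1}ag$ maps $P_g$ into $P_g$ and fixes $g-1$, so it descends to an operator $\overline{T}_{g^{-1}}$ on $\overline{P}_g:=P_g/k(g-1)$. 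A skew primitive having commutator of finite level (Definition \ref{xxdefn3.5}) is precisely a generalized eigenvector of $\overline{T}_{g^{-1}}$, and having generalized commutator of finite level (Definition \ref{xxdefn3.6}) is precisely being a generalized simultaneous weight vector for the commuting family $\{\overline{T}_{h^{-1}}\}_{h\in G_0}$.

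For part (a), I would fix a $(1,\mu(y))$-primitive $y$, set $g=\mu(y)$, and consider the $T_{g^{-1}}$-orbit $U=\operatorname{span}\{g^{-i}yg^{i}:i\ge 0\}\subseteq P_g$. The essential point, and the step I expect to be the main obstacle, is \emph{local finiteness}: that $\dim U/(U\cap C_0)<\infty$. This is exactly where $\GKdim H<\infty$ enters, and I would obtain it from \cite{Zhu} (the local finiteness underlying Lemma \ref{xxlem2.5}); it cannot be read off directly from Theorem \ref{xxthm1.5} applied to the orbit, since the elements $g^{-i}yg^{i}$ do not satisfy the commutation hypothesis (I1.2.3) unless they are already eigenvectors. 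Granting local finiteness, $\overline{T}_{g^{-1}}$ acts on the finite-dimensional cyclic module generated by $\overline{y}$; over the algebraically closed field $k$ this module is the direct sum of the generalized eigenspaces of $\overline{T}_{g^{-1}}$, so $\overline{y}=\sum_c \overline{y}_c$ with $(\overline{T}_{g^{-1}}-c)^{n_c}\overline{y}_c=0$. Lifting each $\overline{y}_c$ back to $y_c\in U\subseteq P_g$ exhibits $y$ as a linear combination (together with a multiple of $g-1$) of skew primitives of weight $g=\mu(y)$, each having commutator of finite level.

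Part (b) requires no growth hypothesis, because finite-dimensionality of $V$ is assumed. Since $G_0$ is abelian the operators $\{T_{h^{-1}}\}_{h\in G_0}$ commute, and because $V$ is conjugation $G_0$-stable while $C_0=kG(H)$ is stable under conjugation by the group-likes $G_0$, both $V$ and $V\cap C_0$ are $G_0$-stable; hence the family descends to commuting operators on $\overline{V}:=V/(V\cap C_0)$. The image of $kG_0$ in $\operatorname{End}_k(\overline{V})$ is a finite-dimensional commutative $k$-algebra, so over $k=\overline{k}$ its primary (generalized eigenspace) decomposition splits $\overline{V}=\bigoplus_\lambda \overline{V}_\lambda$ indexed by characters $\lambda\colon G_0\to k^{\times}$ (each $h\in G_0$ acts invertibly, so the eigenvalues $\lambda(h)$ are nonzero and multiplicative in $h$). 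Writing $\overline{y}=\sum_\lambda \overline{y}_\lambda$ and lifting then expresses $y$ as a linear combination of skew primitives with generalized commutator of finite level.

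For part (c), put $g=\mu(y)$; since $y$ has commutator of level $1$ with $\gamma(y)\in\Gamma\setminus\Gamma_{\sqrt{\;}}$, we have $g\in W_{\times}\subseteq G_0$, and $G_0$ is abelian by hypothesis. I would form $V=k(g-1)+\sum_{h\in G_0}k\,(h^{-1}yh)$; by Lemma \ref{xxlem2.2}(a) every element of $V$ is $(1,g)$-primitive, and by Lemma \ref{xxlem2.2}(b) (using $\GKdim H<\infty$ and $\gamma(y)\notin\Gamma_{\sqrt{\;}}$) one gets $\dim V<\infty$. As $G_0$ is abelian, $V$ is conjugation $G_0$-stable and spanned by skew primitives with weights in $G_0$, so part (b) applies and writes $y$ as a sum of skew primitives $y_\lambda$ with generalized commutator $\lambda$ of finite level. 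For the final assertion, note $\mu(y_\lambda)=g=\mu(y)$ for every summand, and that commutator of level $1$ means $\overline{T}_{g^{-1}}\overline{y}=\gamma(y)\overline{y}$ by \eqref{I0.3.3}, i.e. $\overline{y}$ is a genuine eigenvector of $\overline{T}_{g^{-1}}$ with eigenvalue $\gamma(y)$. Since $g\in G_0$, the weight-space decomposition of part (b) refines the generalized eigenspace decomposition of $\overline{T}_{g^{-1}}$, so $\overline{y}$ lies in the $\gamma(y)$-eigenspace and every nonzero $\overline{y}_\lambda$ satisfies $\lambda(g)=\gamma(y)$. Hence $\gamma(y_\lambda)=\lambda(g)=\gamma(y)$ and $\omega(y_\lambda)=(g,\gamma(y))=\omega(y)$.
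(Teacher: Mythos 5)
Your proof is correct and follows essentially the same route as the paper's: for (a), local finiteness of the conjugation orbit via \cite{Zhu} (the content underlying Lemma \ref{xxlem2.5}) followed by a Jordan-form/generalized-eigenspace decomposition of $T_{g^{-1}}$; for (b), the primary decomposition of a finite-dimensional module over the commutative algebra $kG_0$ (the paper decomposes $V$ itself using vanishing of $\Ext^1$ between distinct simples, you decompose $V/(V\cap C_0)$ -- equivalent); and for (c), producing a finite-dimensional $G_0$-stable space of $(1,g)$-primitives with the given weight commutator and feeding it into (b), where your appeal to Lemma \ref{xxlem2.2}(b) replaces the paper's direct re-run of Theorem \ref{xxthm1.5}(b) but rests on the same machinery. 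The one step to make explicit in (b) is the initial weight decomposition $V=\bigoplus_j (V\cap P_{g_j})$ (a direct sum since distinct group-likes are linearly independent, with each summand $G_0$-stable because $G_0$ is abelian), as the paper does first, so that the lifted components $y_\lambda$ are honest skew primitives to which Definition \ref{xxdefn3.6} applies; your preamble's ``after fixing a weight'' signals this, but the reduction is not actually carried out in your part (b) paragraph.
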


\begin{proof} (a) If $y\in C_0$, then $y=\alpha (\mu(y)-1)$ for some
$\alpha\in k$ [Lemma \ref{xxlem1.6}] and the commutator of $y$ has 
level 0 by definition.

If $y\not\in C_0$, then, by Lemma \ref{xxlem2.5},
$V:=\sum_{n\in {\mathbb Z}} k (g^{-n} yg^n)+k(g-1)$ is finite dimensional
over $k$, where $g=\mu(y)$.
Then $T_{g^{-1}}$ acts on $V$ as an invertible linear map.
Pick a basis of $V$ so that the presentation of $T_{g^{-1}}$
with respect to the basis is in the Jordon canonical form.
Then each basis element is a skew primitive element
with commutator of finite level. The assertion follows.

(b) Write $V=\bigoplus_{j=1}^m V_j$ where each $V_j$ is spanned by 
skew primitives with weight $g_j$ for distinct group-like
elements $g_1,\cdots, g_m\in G_0$. Since $G_0$ is abelian,
each $V_j$ is conjugation $G_0$-stable. Passing from $V$ to $V_j$
we may assume that each element in $V$ is a skew primitive
of weight $g$. 

For every $h\in G_0$, $T_{h^{-1}}$ acts on $V$ as an invertible
linear map. It is clear that $V$ is a finite dimensional $kG_0$-module.
Since $kG_0$ is commutative, every finite dimensional simple
$G_0$-module is 1-dimensional and $\Ext^1_{kG_0}(S,S')=0$ if $S$ and
$S'$ are distinct simple modules over $kG_0$. Then $V$ is a finite
direct sum of submodules $V_i$ so that the support of each $V_i$ is
a single closed point of $\Spec kG_0$. This closed point
corresponds to a 1-dimensional $G_0$-representation $\lambda_i$.
Fix any $i$, every element in $V_i$ has a generalized commutator
$\lambda_i$ with level no more than $\dim V_i$.

(c) Let $V$ be the vector space spanned by all skew
primitive elements $z$ with commutator of level 1
such that $\omega(z)=\omega(y)$. Pick any finite
set $\{z_1,\cdots,z_w\}$ which is linearly independent
in $V/(V\cap k(\mu(y)-1))$. Then (I1.2.1)-(I1.2.3)
hold for $D=C_0$ and $A=k[\mu(y)^{\pm 1}]$. By Theorem
\ref{xxthm1.5}(b), $w\leq \GKdim H$. Thus $V$ is
finite dimensional. Clearly, $T_{g^{-1}}$ stabilizes $V$ for all
$g\in G_0$. The first assertion follows from part (b). The
final assertion is clear since every element in $V$
has weight commutator equal $\omega(y)$. 
\end{proof}

We need to introduce some conventions.
Let $\mu\in W$. Define $P_{\mu}$ to be the $k$-linear space
spanned by all skew primitives $y\in H$ with $\mu(y)=\mu$.
Let $\gamma$ be a nonzero scalar and let $P_{\mu,\gamma,n}$
be the $k$-linear space spanned by all skew primitives $y\in P_{\mu}$
with commutator $\gamma$ of level no more than $n$.
Let $P_{\mu,\gamma,*}=\sum_{n\geq 0}P_{\mu,\gamma,n}$,
$P_{*,*,n}=\sum_{\mu,\gamma}P_{\mu,\gamma,n}$ and
$P_{*,*,*}=\sum_{\mu,\gamma,n}P_{\mu,\gamma,n}$.

Given any $\mu\in W$ and $\lambda\in G_0^*$, let $P_{\mu,\lambda,n}$
be the $k$-linear space spanned by all skew primitives $y\in H$
with generalized commutator $\lambda$ of level no more than $n$
and $\mu(y)=\mu$.
Let $P_{\mu,\lambda,*}=\sum_{n\geq 0}P_{\mu,\lambda,n}$,
$P_{*,G*,n}=\sum_{\mu,\lambda}P_{\mu,\lambda,n}$ and
$P_{*,G*,*}=\sum_{\mu,\lambda,n}P_{\mu,\lambda,n}$.

Lemma \ref{xxlem3.7}(a) says that $P_{*,*,*}$ contains
all skew primitive elements and Lemma \ref{xxlem3.7}(c)
says that $P_{\mu,\gamma,1}$ is a subspace of $P_{*,G*,*}$
when $\gamma\in k^\times$ and $\gamma$ is 1 or not a root of 
unity. 

\begin{lemma}
\label{xxlem3.8}
Retain the above notation. Suppose that $H$ does not contain 
a free subalgebra of rank 2. 
\begin{enumerate}
\item
If $P_{\mu,\gamma,1}\subset k(\mu-1)$, then $P_{\mu,\gamma, *}
\subset k(\mu-1)$.
\item
If $P_{\mu, \gamma,1}\not\subset k(\mu-1)$ and $\gamma$ is not
a root of unity, then $P_{\mu,\gamma_0,*}\subset k(\mu-1)$ for every
root of unity $\gamma_0$.
\item
If $\gamma$ is not a root of unity, then
$P_{\mu,\gamma,1}/k(\mu-1)$ has dimension at most 1.
\item
Suppose $\gamma_1$ and $\gamma_2$ are two distinct scalars
neither of which is a root of unity.
If $P_{\mu, \gamma_i,1}\not \subset k(\mu-1)$ for $i=1,2$,
then $\gamma_1^{N}\gamma_2^{M}=1$
for some positive integers $N,M$. Further there is no
$\gamma_3\in k^\times \setminus \{\gamma_1,
\gamma_2\}$ such that $P_{\mu,\gamma_3,1}
\not \subset k(\mu-1)$.
\item
Suppose that $\GKdim H<\infty$.
If $\gamma$ is not a root of unity, then $P_{\mu,\gamma,*}=
P_{\mu,\gamma,1}\subset P_{*,G*,*}$.
\end{enumerate}
\end{lemma}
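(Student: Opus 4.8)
The five parts run on a single engine, patterned on the proof of Proposition~\ref{xxprop3.3}. Given finitely many skew primitives $y_1,\dots,y_s$ of weight $g=\mu$ whose conjugation relations can be diagonalized as $g^{-1}y_ig=\lambda_i y_i$ (possible after adjusting each $y_i$ by an element of $k(g-1)$ whenever $\lambda_i\neq1$, by \eqref{I0.3.3}), I form the model Hopf algebra $K=K(\Lambda,\mu)$ of Lemma~\ref{xxlem3.1} with $\mu_i=g$ and $\lambda_i(g)=\lambda_i$, together with the Hopf surjection $\phi\colon K\to H_0$ onto the subalgebra of $H$ generated by $g^{\pm1}$ and the $y_i$, sending $g\mapsto g$, $y_i\mapsto y_i$. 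Whenever some $\lambda_i$ is not a root of unity, $g$ has infinite order by Lemma~\ref{xxlem2.2}(d), so $\phi$ is injective on $kG_0$, and it is injective on the skew primitives of $y$-degree $\le1$ exactly when the $y_i$ are independent modulo $C_0$. Since $K$ contains a free subalgebra of rank $2$ (Lemma~\ref{xxlem3.1}(a)) while $H$ does not, $\phi$ cannot be injective; by \cite[Theorem 5.3.1]{Mo} there is then a nonzero skew primitive $F\in\ker\phi$ of $y$-degree $z\ge2$, whose top-degree terms obey the constraint of Lemma~\ref{xxlem3.1}(c), which here (as $\lambda_{ij}=\lambda_i(\mu_j)=\lambda_i$) reads $(\prod_i\lambda_i^{N_i})^{z-1}=1$. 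Everything is read off this relation.

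Part (a) needs no model: if $P_{\mu,\gamma,*}\not\subset k(\mu-1)$, pick $y\notin C_0$ of minimal commutator level $n\ge1$; then $(T_{g^{-1}}-\gamma\,\mathrm{Id})^{n-1}(y)$ is a skew primitive of weight $g$, not in $C_0$, with commutator $\gamma$ of level $1$, so $P_{\mu,\gamma,1}\not\subset k(\mu-1)$, which is (a) contrapositively. For (c), two elements of $P_{\mu,\gamma,1}$ independent modulo $k(\mu-1)$ diagonalize (as $\gamma\neq1$) to skew primitives of the common eigenvalue $\gamma$, independent modulo $C_0$; since $\gamma$ is not a root of unity, Proposition~\ref{xxprop3.3}(b1) (equivalently Corollary~\ref{xxcor3.4}) produces a free subalgebra, a contradiction, so the dimension is at most $1$.

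For (b), reduce via (a) to level $1$ and suppose a genuine level-$1$ commutator $y_{\gamma_0}$ with root-of-unity eigenvalue $\gamma_0$ coexists with the given non-root $y_\gamma$. When $\gamma_0\neq1$, or $\gamma_0=1$ with trivial $\tau$, both are diagonalizable and Proposition~\ref{xxprop3.3}(b3),(b2) yield a free subalgebra; in the sole remaining case $\gamma_0=1$ with $\tau\neq0$ the constraint forces every top term to be a power of the eigenvalue-$1$ element $y_{\gamma_0}$, which is never skew primitive for $z\ge2$, so $K$ has no skew primitive of degree $\ge2$, $\phi$ is injective, and $H$ again contains a free subalgebra. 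For (d) the constraint for $\{y_{\gamma_1},y_{\gamma_2}\}$ is $(\gamma_1^{N_1}\gamma_2^{N_2})^{z-1}=1$; as neither $\gamma_i$ is a root of unity, both $N_1,N_2>0$, giving $\gamma_1^{N}\gamma_2^{M}=1$ with $N,M>0$. If a third non-root $\gamma_3$ occurred, the three pairwise relations, all with strictly positive exponents, read in $w_i:=\gamma_i\otimes1\in k^{\times}\otimes\QQ$ force $w_2=-aw_1$ and $w_3=-bw_1$ with $a,b>0$, whence the third relation gives a strictly positive multiple of $w_1$ equal to $0$, so $\gamma_1$ is a root of unity — impossible.

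Part (e) is the crux and the main obstacle. The inclusion $P_{\mu,\gamma,1}\subset P_{*,G*,*}$ is Lemma~\ref{xxlem3.7}(c), so by (a) it remains only to exclude commutators of level $\ge2$ when $\gamma$ is not a root of unity. Applying $(T_{g^{-1}}-\gamma\,\mathrm{Id})^{n-2}$ and adjusting by $k(g-1)$ produces skew primitives $u,v\notin C_0$, independent modulo $C_0$, with $g^{-1}ug=\gamma u$ and the Jordan relation $g^{-1}vg=\gamma v+u$. This relation is not of the diagonal form admitted by Lemmas~\ref{xxlem3.1}--\ref{xxlem3.2}, and that is exactly the difficulty: the commutator machinery is built for diagonal conjugation, so the Jordan phenomenon must first be linearized. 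The plan is to introduce the modified Hopf algebra $\tilde K$ generated by $g^{\pm1},u,v$ carrying precisely this relation, to show by the Diamond Lemma (\cite[Theorem 1.2]{Be}) that it has normal form $g^n\cdot(\text{word in }u,v)$, hence coradical $kG_0$ and a free subalgebra $k\langle u,v\rangle$, and to map $\phi\colon\tilde K\to H_0$. Filtering $\tilde K$ by the number of $v$'s kills the correction $u$ (conjugation sends $v\mapsto\gamma v+u$), so $\gr\tilde K$ is the diagonal model of Lemma~\ref{xxlem3.1} with $u,v$ of weight $g$ and common eigenvalue $\gamma$. Since $H$ has no free subalgebra, $\phi$ is not injective, giving a skew primitive $F\in\ker\phi$ of $y$-degree $\ge2$; its leading $v$-degree part $\bar F$ is again skew primitive in $\gr\tilde K$. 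When $\bar F$ has $y$-degree $z'\ge2$, Lemma~\ref{xxlem3.1}(c) gives $\gamma^{z'(z'-1)}=1$, forcing $\gamma$ to be a root of unity; the remaining low-$v$-degree cases reduce to a skew primitive of $y$-degree $\ge2$ inside $k\langle g^{\pm1},u\rangle$, which cannot exist as $\gamma$ is not a root of unity. Either way we reach the contradiction that establishes $P_{\mu,\gamma,*}=P_{\mu,\gamma,1}$.
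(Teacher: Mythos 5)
Your proof is correct, and on parts (a), (c) and the first claim of (d) it coincides with the paper's: (a) is the same minimal-level reduction (the paper's ``clear by induction''), (c) is Proposition \ref{xxprop3.3}(b1), and (d) is the constraint \eqref{I3.3.1} with $d_1=d_2=1$ --- your identity $N_1(N_1-1)+N_1N_2=N_1(z-1)$ reproduces exactly the paper's exponents $N,M$, and your $k^{\times}\otimes_{\ZZ}\QQ$ argument excluding $\gamma_3$ is a repackaging of the paper's computation $1=(\gamma_1^N\gamma_2^M)^{AC}=\gamma_3^{-BCN-DAM}$. You genuinely diverge in two places. In the $\gamma_0=1$ subcase of (b), the paper does not feed $\tau\neq 0$ into the model: it reduces to the pointed case and passes to the coradically graded Hopf algebra, where the correction term $\tau(g-1)$ dies, and then quotes Proposition \ref{xxprop3.3}(b2); you instead use the fact that the construction $K(\Lambda,\mu)$ already allows a nontrivial additive character $\tau_1$ (legitimate since $g$ has infinite order by Lemma \ref{xxlem2.2}(d)) and rerun the kernel analysis, with the top term $cy_1^z$ killed because ordinary binomial coefficients are nonzero in characteristic zero --- both work, yours trading the graded detour for a rerun of the Proposition \ref{xxprop3.3} computation with $\tau\neq0$. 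The larger divergence is (e): the paper stays inside $H$, takes the Hopf subalgebra generated by $\mu^{\pm1},y_1,y_2$, equips it with the ad hoc Hopf filtration $F_0=k[\mu^{\pm1}]$, $F_1=F_0+F_0y_1$, $F_2=F_0+F_0y_1+F_0y_1^2+F_0y_2$, $F_n=\sum F_iF_{n-i}$ (which likewise kills the Jordan term in the associated graded), and then simply applies part (c) to $\gr_F K$ together with the standard lifting of a free subalgebra along a filtration; you instead build the external Jordan model $\widetilde{K}$ --- which, as you could note, is just the skew Laurent extension $k\langle u,v\rangle[g^{\pm1};\sigma]$ with $\sigma(u)=\gamma u$, $\sigma(v)=\gamma v+u$, so the Diamond Lemma step and the freeness of $k\langle u,v\rangle$ are automatic --- filter it by $v$-degree, and extract a contradiction from a skew primitive in $\ker\phi$ via Lemma \ref{xxlem3.1}(c). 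Your route gives a uniform ``model plus \cite[Theorem 5.3.1]{Mo}'' treatment of all five parts; the paper's (e) is shorter because it recycles (c) and never verifies a new Hopf structure. Two spots you compress do check out but should be said: the $v$-filtration on $\widetilde{K}$ is a coalgebra filtration (the relation $vg=\gamma gv+gu$ only lowers $v$-count), so the leading $v$-part of a skew primitive is skew primitive in $\gr\widetilde{K}$; and in your residual case the leading part must be $cv$ (all skew primitives of the diagonal model have $y$-degree $\le 1$ when $\gamma$ is not a root of unity), whence comparing $v$-counts in $\Delta(F)=F\otimes 1+h\otimes F$ forces $h=g$ and exhibits $F-cv$ as a $(1,g)$-primitive of $y$-degree $\ge 2$ in $k\langle g^{\pm1},u\rangle$, which is impossible.
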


\begin{proof} (a) This is clear by induction.

(b) First assume that $\gamma_0$ is not 1.
Pick $y_2\in P_{\mu, \gamma,1}\setminus C_0$. If
$P_{\mu,\gamma_0,*}\not\subset k(\mu-1)$ for a root of unity 
$\gamma_0$, by part (a), there is a $y_1\in P_{\mu,\gamma_0,1}
\setminus C_0$. Since $\gamma_0$ and $\gamma$ are not 1, 
after adding a suitable term $\alpha(\mu(y_i)-1)$ to $y_i$,
the hypothesis in Proposition \ref{xxprop3.3}(ii) holds.
Then we are in the situation of Proposition \ref{xxprop3.3}(b2,b3)
(where $d_1=d_2=1$). By Proposition \ref{xxprop3.3}, $H$ contains 
a free subalgebra of rank 2, a contradiction.

Second assume that $\gamma_0=1$. For the statement we only need to
consider the Hopf subalgebra generated by 
group-like and skew primitive elements. So we may assume
that $H$ is pointed. Passing to the associated graded Hopf
algebra of $H$ with respect to its coradical filtration,
one can assume the hypothesis in Proposition \ref{xxprop3.3}(ii) holds.
So the above proof works. 

(c) This follows from Proposition \ref{xxprop3.3}(b1).

(d) By the proof of Proposition \ref{xxprop3.3}, see \eqref{I3.3.1},
$$q_1^{d_1(M_1(M_1-1))+d_2(M_1M_2)}
q_2^{d_2(M_2(M_2-1))+d_1(M_1M_2)}=1$$
where $M_1$ and $M_2$ are nonnegative integers with $M_1+M_2\geq 2$.
Note that $\gamma_i=q_i$ and $d_1=d_2=1$ in this case.
Let $N=M_1(M_1-1)+M_1M_2$ and 
$M=M_2(M_2-1)+M_1M_2$.
Then $N$ and $M$ are non-negative and $N+M\geq 2$. Since $\gamma_1$
and $\gamma_2$ are not roots of unity, both $N$ and $M$ must be positive.

If such a $\gamma_3$ exists, by part (b) it is not a root of unity.
By the above assertion we have positive integers
$A,B,C,D$ such that $\gamma_1^A \gamma_3^B=1$ and
$\gamma_2^C \gamma_3^D=1$. Then
$$1=(\gamma_1^{N}\gamma_2^M)^{AC}=
\gamma_1^{ACN}\gamma_2^{CAM}=\gamma_3^{-BCN-DAM}$$
which contradicts the fact $\gamma_3$ is not a root of unity.

(e) If $P_{\mu,\gamma,*}\neq P_{\mu,\gamma,1}$, pick
a skew primitive $y_2\in P_{\mu,\gamma,2}$. This means that
$\mu^{-1}y_2 \mu-\gamma y_2=y_1\in P_{\mu,\gamma,1}
\setminus k(\mu-1)$. Consider the Hopf subalgebra $K$ generated 
as an algebra by $y_2, y_1, \mu, \mu^{-1}$. By possibly adding a 
term $\beta(\mu-1)$ to $y_1$ for some $\beta\in k$ and adding a 
term $\alpha(\mu-1)$ to $y_2$ for some $\alpha\in k$, we can assume 
that $y_1\mu =\gamma \mu y_1$ and $y_2\mu = \gamma \mu y_2+\mu y_1$.

It remains to show that $K$ contains a free subalgebra of rank 2.
Define a filtration $F_i$ of $K$ inductively as follows:

$F_0=C_0(K)=k[\mu^{\pm 1}]$,

$F_1= F_0+F_0 y_1=F_0 y_1 +F_0$,

$F_2= F_0+F_0 y_1+F_0 y_1^2+ F_0 y_2=
F_0+y_1 F_0 +y_1^2 F_0 + y_2 F_0$.

$F_n=\sum_{i=1}^{n-1} F_{i} F_{n-i}$ for all $n\geq 3$.

It is easy to check that $F$ is an Hopf algebra filtration and $\gr_F K$
is a Hopf algebra. In $\gr_F K$, $y_1,y_2$ are linearly
independent in $\gr_F K/k(\mu-1)$. Since $y_1,y_2\in P_{\mu, \gamma,1}(K)$,
by part (c), $\gr_F K$ contains a free subalgebra of rank 2. Therefore $K$
contains a free subalgebra of rank 2. This yields a contradiction. Therefore
the first assertion (namely, the first equation) follows. Finally, by 
Lemma \ref{xxlem3.7}(c), $P_{\mu,\gamma,1}\subset P_{*,G*,*}$.
\end{proof}

\begin{remark}
\label{xxrem3.9} Suppose $\GKdim H<\infty$. By Lemmas \ref{xxlem2.1}(b)
and \ref{xxlem3.7}(a),
$W\setminus W_{\sqrt{\;}}\subseteq W_{\times}$. In practice it often 
happens that $W\setminus W_{\sqrt{\;}}= W_{\times}$.
\end{remark}

Now we are ready to prove the Third Lower Bound Theorem.
Let $Y_*$ be the $k$-linear vector space spanned by all skew
primitive elements $y$ with commutator of finite level such that
$\gamma(y)\in \Gamma\setminus \Gamma_{\sqrt{\;}}$.

\begin{theorem}
\label{xxthm3.10}
Suppose $G_0$ is abelian. Then
$$\GKdim H\geq \GKdim C_0+\dim Y_*/(Y_*\cap C_0).$$
\end{theorem}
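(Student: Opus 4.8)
The plan is to reduce everything to the First Lower Bound Theorem [Theorem \ref{xxthm1.5}(b)] applied to a suitable associated graded Hopf algebra. We may assume $\GKdim H<\infty$, since otherwise the inequality is vacuous; in particular $H$ contains no free subalgebra of rank $2$, so Lemmas \ref{xxlem3.7} and \ref{xxlem3.8} are available. It suffices to prove, for every finite-dimensional subspace $U\subseteq Y_*$, that $\dim U/(U\cap C_0)\le \GKdim H-\GKdim C_0$, and then let $U$ exhaust $Y_*$.

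First I would fix skew primitive generators $y_1,\dots,y_N$ of such a $U$, each with commutator of finite level and $\gamma(y_i)\in\Gamma\setminus\Gamma_{\sqrt{\;}}$, so that every weight $\mu(y_i)$ lies in $W_{\times}$. Let $G_0'=\langle\mu(y_1),\dots,\mu(y_N)\rangle\subseteq G_0$, a finitely generated abelian group. Using local finiteness of the conjugation action of the group generated by weights (the result of \cite{Zhu} recorded in Lemma \ref{xxlem2.5}, valid because $\GKdim H<\infty$), the space $V:=\sum_i\sum_{g\in G_0'}k\,g^{-1}y_ig+\sum_i k(\mu(y_i)-1)$ is finite-dimensional, is stable under conjugation by $G_0'$, and is spanned by skew primitives whose weights lie in $G_0'$. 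Decomposing $V$ first by weight and then, as in the proof of Lemma \ref{xxlem3.7}(b), as a $kG_0'$-module into generalized eigenspaces $V^{(g)}_\lambda$, I would let $V'$ be the sum of those $V^{(g)}_\lambda$ whose single-weight commutator $\lambda(g)$ is $1$ or not a root of unity. A short eigenvalue computation shows that each $y_i$ lies in $V'+C_0$: the only characters $\lambda$ with nonzero $C_0$-class in $y_i$ satisfy $\lambda(\mu(y_i))=\gamma(y_i)\in\Gamma\setminus\Gamma_{\sqrt{\;}}$. Hence $\dim U/(U\cap C_0)\le \dim V'/(V'\cap C_0)=:d$.

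Next I would choose skew primitive representatives $z_1,\dots,z_d$ of a basis of $V'/(V'\cap C_0)$, adapted to the weight decomposition and, within each weight, to a flag of the commuting $G_0'$-action, ordered by strictly increasing degrees $n_1<\dots<n_d$. For each $g\in G_0'$ this yields a triangular relation
\begin{equation*}
g^{-1}z_kg=\lambda_k(g)z_k+\sum_{l<k}c_{kl}(g)z_l+\tau_k(g)(\mu(z_k)-1),
\end{equation*}
in which only $z_l$ of the same weight as $z_k$ occur (by Lemma \ref{xxlem1.6} and linear independence of skew primitives of distinct weights). I would then filter $B':=\langle C_0,z_1,\dots,z_d\rangle$ with $C_0$ in degree $0$ and $\deg z_k=n_k$. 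Since $\Delta(z_k)=z_k\otimes1+\mu(z_k)\otimes z_k$ and the antipode keep $z_k$ in its filtration degree, $\gr B'$ is a Hopf algebra with $(\gr B')_0=C_0$, and the symbols $\bar z_k$ are skew primitive of weight $\mu(z_k)\in C_0$. The purpose of the increasing degrees is that the off-diagonal terms $\sum_{l<k}c_{kl}(g)z_l$ and the degree-$0$ term have strictly smaller degree than $z_k$, so in $\gr B'$ the relation collapses to $\bar g^{-1}\bar z_k\bar g=\lambda_k(g)\bar z_k$. Thus (I1.2.1)--(I1.2.3) hold in $\gr B'$ with $D=C_0$, $A=k$ and all $b_{ij}=0$, while $\lambda_{kk}=\lambda_k(\mu(z_k))=\gamma(z_k)$ is $1$ or not a root of unity; Theorem \ref{xxthm1.5}(b) then gives that $\{\bar z_1^{a_1}\cdots\bar z_d^{a_d}\}$ is linearly independent over $C_0$ in $\gr B'$.

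Finally I would lift this to $H$: a $C_0$-linear relation among the monomials $z_1^{a_1}\cdots z_d^{a_d}$ in $B'\subseteq H$ would, upon passing to top-degree symbols, force a $C_0$-linear relation among the $\bar z_1^{a_1}\cdots\bar z_d^{a_d}$ in $\gr B'$, which is impossible; hence these monomials are linearly independent over $C_0$ in $H$. The growth estimate at the end of the proof of Theorem \ref{xxthm1.5}(b) then yields $\GKdim H\ge\GKdim C_0+d\ge\GKdim C_0+\dim U/(U\cap C_0)$, and letting $U$ exhaust $Y_*$ completes the argument. The main obstacle is precisely that the $G_0'$-action on skew primitives is only locally finite and triangular, not diagonalizable, so higher-level commutators genuinely occur and Theorem \ref{xxthm1.5}(b) cannot be applied directly; the associated graded construction is what converts the Jordan/triangular structure into the clean level-one relations required, while lifting the linear independence back to $H$ ensures that no growth is lost in the passage to $\gr B'$.
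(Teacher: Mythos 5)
There are two genuine gaps, both at places where the paper's own proof is structured specifically to avoid them.

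First, the finite-dimensionality of your space $V:=\sum_i\sum_{g\in G_0'}k\,g^{-1}y_ig+\sum_i k(\mu(y_i)-1)$ is not justified. Lemma \ref{xxlem2.5} only concerns conjugation of $y$ by its \emph{own} weight $\mu(y)$, and even there it merely produces one well-behaved element in the cyclic span; it does not give local finiteness of conjugation by the other generators of $G_0'$. For $h\in G_0'$ with $h\neq\mu(y_i)$, the operator $T_{h^{-1}}$ preserves $P_{\mu(y_i),\gamma(y_i),n}$ (since $G_0$ is abelian), so your claim amounts to finite-dimensionality of the spaces $P_{\mu,\gamma,n}$. When $\gamma$ is not a root of unity this follows from Lemma \ref{xxlem3.8}(c,e), but your $y_i$ are allowed to have $\gamma(y_i)=1$ (indeed $1\in\Gamma\setminus\Gamma_{\sqrt{\;}}$, and your construction explicitly permits $\lambda_k(\mu(z_k))=1$), and finite-dimensionality of $P_{\mu,1,*}$ is exactly Corollary \ref{xxcor3.11}(a), which the paper \emph{deduces from} Theorem \ref{xxthm3.10}. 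So this step is circular: no a priori bound on $P_{\mu,1,*}$ is available, and examples such as \ref{xxex3.13} show that weight spaces of skew primitives can be infinite-dimensional even when $\GKdim H=0$. This is why Lemma \ref{xxlem3.7}(b) takes the finite-dimensional $G_0$-stable $V$ as a \emph{hypothesis}, and why the paper's proof never forms such an orbit span for higher-level elements: its base case (Claim A, $n=1$) uses only level-one generalized commutators, for which $g^{-1}yg-\lambda(g)y\in C_0$ holds by definition and Theorem \ref{xxthm1.5}(b) applies directly with $A=kG_0$.

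Second, your one-shot associated-graded argument silently assumes that each $z_k$ has exact filtration degree $n_k$ in $B'$, i.e. $z_k\notin F_{n_k-1}(B')$. This is what makes $\bar z_k\neq 0$, hence gives (I1.2.2) in $\gr B'$, and what collapses the triangular relation to $\bar g^{-1}\bar z_k\bar g=\lambda_k(g)\bar z_k$. But $F_{n_k-1}(B')$ is the span of \emph{all words} in $C_0$ and $z_1,\dots,z_{k-1}$ of weighted degree less than $n_k$, and nothing in your argument excludes $z_k$ from lying in it: the natural tool, Theorem \ref{xxthm1.5}(a), cannot be invoked in $H$ because (I1.2.3) fails for a triangular family (the corrections $\sum_{l<k}c_{kl}(g)z_l$ are not in $D=C_0$) --- which is precisely why you passed to $\gr B'$ in the first place, so the argument is circular here as well. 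If some $z_k$ drops degree, its honest symbol lands in a lower graded component where the degree-$n_k$ relation says nothing, and both the application of Theorem \ref{xxthm1.5}(b) in $\gr B'$ and your lifting of monomial independence back to $H$ break down. The paper's proof avoids this by using the canonical level-based Hopf filtration of all of $H$ ($F_0=C_0$, $F_1=F_0+F_0Y_{G1}$, $F_2=F_1^2+F_0Z$, etc.), reducing the level by one per pass to $\gr_F H$, and measuring progress by the dimension of the symbol space $\gr Y_{G(n+1)}(H)$, which equals $\dim Y_{G(n+1)}(H)/(Y_{G(n+1)}(H)\cap C_0)$ regardless of which degrees individual elements happen to land in --- note the paper's explicit parenthetical treatment of the degenerate case ``when trivially $y\in F_1$''. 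Your single filtration with strictly increasing basis-dependent degrees has no such robustness, so both points would need separate arguments before your proof could stand.
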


\begin{proof} Nothing needs to be proved if $\GKdim H=\infty$,
so we assume $\GKdim H<\infty$.

Let
$$Y_n=\sum_{\mu\in W_{\times},\gamma\in \Gamma\setminus
\Gamma_{\sqrt{\;}}}
P_{\mu,\gamma,n}\quad {\text{and}}\quad
Y_{Gn}= \sum_{\mu\in W_{\times}, \lambda\in G_0^*,
\gamma:=\lambda(\mu)\in \Gamma\setminus \Gamma_{\sqrt{\;}}}
P_{\mu,\lambda,n}$$
for all $n\geq 1$. 
Then $Y_*=\sum_n Y_n$. Let $Y_{G*}= \sum_n Y_{Gn}$. We prove the 
following claim by induction:

Claim A:
$$\GKdim H\geq \GKdim C_0+\dim Y_{Gn}/(Y_{Gn}\cap C_0)$$
for all $n\geq 1$. 
When $n=1$, let $\{y_i\}$ be a basis of $Y_{G1}/(Y_{G1}\cap C_0)$
such that each $y_i$ is in $P_{\mu_i,\lambda_i,1}$ for some
$\mu_i,\lambda_i$. Then we have
$$\mu_j^{-1}y_i\mu_j=\lambda_i(\mu_j) y_i+\tau_{ij}(\mu_i-1)$$
where $\lambda_i(\mu_i)= \gamma(y_i)$ is either 1 or not a root of 
unity and where $\tau_{ij}\in k$. By Theorem
\ref{xxthm1.5} (with $D=C_0$ and $A=kG_0$), 
$$\GKdim H\geq 
\GKdim C_0+\#\{y_i\}=\GKdim C_0+\dim Y_{G1}/(Y_{G1}\cap C_0),$$
which proves Claim A for $n=1$.

Now assume that Claim A holds for $n$. Without loss of 
generality we may assume that $H$ is generated as an algebra
by $C_0$ and all skew primitive elements of $H$. 
Define a filtration $F_i$ of $H$ as follows:

$F_0=C_0$,

$F_1= F_0+F_0 Y_{G1}=F_0+Y_{G1}F_0$,

$F_2= F_1^2+ F_0 Z$ where $Z$ is the $k$-linear
span of all skew primitive elements of $H$, and

$F_m=\sum_{i=1}^{m-1} F_{i} F_{m-i}$ for all $m\geq 3$.

\noindent
Then $\{F_m\}$ is a Hopf algebra filtration of $H$. Let $K$ be the 
associated graded Hopf algebra $\gr_F H$. Note that
if $y\in P_{\mu,\lambda,2}\subset Y_{G2}(H)$, then $y\in F_2$
and the associated element in $K$ is $\gr y\in F_2/F_1$ and
$g^{-1}(\gr y) g=\lambda(g) (\gr y)$ for all $g\in G_0$
(or, when trivially $y\in F_1$, we have $\gr y\in F_1/F_0$ or 
$\gr y\in F_0$). Thus $\gr y\in Y_{G1}(K)$ (which is easy to see
when $y\in F_1$). By induction one sees that
$$Y_{Gn}(K)\supseteq \{\gr y \; \mid \; y\in Y_{G(n+1)}(H)\}
=:\gr Y_{G(n+1)}(H)$$ 
for all $n$. Applying the induction hypothesis to $K$,
$$\GKdim K\geq \GKdim C_0+\dim Y_{Gn}(K)/(Y_{Gn}(K)\cap C_0).$$
By \cite[Lemma 6.5]{KL}, $\GKdim H\geq \GKdim K$.
It is clear that
$$\dim Y_{G(n+1)}(H)/(Y_{G(n+1)}(H)\cap
C_0)=\dim \gr Y_{G(n+1)}(H)/(\gr Y_{G(n+1)}(H)\cap C_0).$$
Therefore
$$
\begin{aligned}
\GKdim H &\geq \GKdim K\geq
\GKdim C_0+\dim Y_{Gn}(K)/(Y_{Gn}(K)\cap C_0)\\
&\geq \GKdim C_0+\dim \gr Y_{G(n+1)}(H)/(\gr Y_{G(n+1)}(H)\cap C_0)\\
&=\GKdim C_0+\dim Y_{G(n+1)}(H)/(Y_{G(n+1)}(H)\cap C_0)
\end{aligned}
$$
which finishes the induction step. Therefore we proved
Claim A.

When $n$ goes to infinity, we have
\begin{equation}
\label{I3.10.1}\tag{I3.10.1}
\GKdim H \geq
\GKdim C_0+\dim Y_{G*}(H)/(Y_{G*}(H)\cap C_0).
\end{equation}

Next we prove the following claim by induction:

Claim B:
$$\GKdim H\geq \GKdim C_0+\dim Y_{n}/(Y_{n}\cap C_0).$$
When $n=1$, this follows from \eqref{I3.10.1} since
$Y_1(H)\subset Y_{G*}(H)$ by Lemma \ref{xxlem3.7}(c).
Note that $Y_1$ is $G_0$-stable.
Using an argument similar to the proof of Claim A by passing to
the associated graded Hopf algebra $\gr_F H$ (and
replacing $Y_{Gn}$ by $Y_n$), one sees that Claim B holds.
When $n$ goes to infinity, we have
$$
\GKdim H \geq
\GKdim C_0+\dim Y_{*}(H)/(Y_{*}(H)\cap C_0).$$
\end{proof}

The proof of Theorem \ref{xxthm3.10} also shows the following.

\begin{corollary}
\label{xxcor3.11}
Suppose $G_0$ is abelian and $\GKdim H<\infty$. Let
$\mu\in W$ and suppose that $\gamma\in k^{\times}$ is not a root of unity.
Then
\begin{enumerate}
\item
$P_{\mu,1,*}$ is finite dimensional.
\item
$P_{\mu,1,*}$ and $P_{\mu,\gamma,*}$ are subspaces
of $P_{*,G*,*}$.
\item
$Y_{*}=Y_{G*}$.
\end{enumerate}
\end{corollary}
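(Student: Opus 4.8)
The plan is to read all three parts off the proof of Theorem \ref{xxthm3.10} (its Claims A and B, hence the bound $\GKdim H\ge\GKdim C_0+\dim Y_{*}/(Y_{*}\cap C_0)$) together with Lemmas \ref{xxlem3.7} and \ref{xxlem3.8}. First note that $\GKdim H<\infty$ forces $H$ to contain no free subalgebra of rank $2$, so the standing hypothesis of Lemma \ref{xxlem3.8} holds and every part of that lemma is available. Throughout I may assume $\mu\in W_{\times}$: otherwise no skew primitive of weight $\mu$ has a commutator equal to $1$ or to a non-root-of-unity scalar of finite level, so $P_{\mu,1,*}$ and $P_{\mu,\gamma,*}$ are contained in $C_0\cap P_{\mu}=k(\mu-1)$ by Lemma \ref{xxlem1.6} and all three assertions are trivial. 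In the remaining case $\mu\in\langle W_{\times}\rangle=G_0$, so $\mu$ is central in the abelian group $G_0$.

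For part (a) I would show $P_{\mu,1,*}\subseteq Y_{*}$. Indeed, unless $P_{\mu,1,*}\subseteq C_0$ (where it equals $k(\mu-1)$ and is trivially finite dimensional), there is a skew primitive of weight $\mu$ and commutator $1$ outside $C_0$; applying $T_{\mu^{-1}}-Id$ repeatedly drops its level to $1$, so $1\in\Gamma\setminus\Gamma_{\sqrt{\;}}$ and $P_{\mu,1,n}\subseteq Y_{n}$ for all $n$. Theorem \ref{xxthm3.10} then gives $\dim Y_{*}/(Y_{*}\cap C_0)\le\GKdim H-\GKdim C_0<\infty$, and since $P_{\mu,1,*}\cap C_0=k(\mu-1)$ is one-dimensional, $P_{\mu,1,*}$ is finite dimensional.

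For part (b), the case of $\gamma$ not a root of unity is immediate from Lemma \ref{xxlem3.8}(e), which gives $P_{\mu,\gamma,*}=P_{\mu,\gamma,1}\subseteq P_{*,G*,*}$. For $P_{\mu,1,*}$ I would first check conjugation $G_0$-stability: if $(T_{\mu^{-1}}-Id)^{n}(y)\in C_0$ and $g\in G_0$, then $\mu$ and $g$ commute, so $T_{\mu^{-1}}$ and $T_{g^{-1}}$ commute and $(T_{\mu^{-1}}-Id)^{n}(g^{-1}yg)=g^{-1}\big((T_{\mu^{-1}}-Id)^{n}(y)\big)g\in C_0$, using that $C_0=kG(H)$ is stable under conjugation. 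Thus $P_{\mu,1,*}$ is a finite-dimensional (part (a)), conjugation-$G_0$-stable space spanned by skew primitives of weight $\mu\in G_0$, and Lemma \ref{xxlem3.7}(b) writes each of its elements as a linear combination of skew primitives with generalized commutator of finite level; this is exactly $P_{\mu,1,*}\subseteq P_{*,G*,*}$.

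For part (c), the inclusion $Y_{G*}\subseteq Y_{*}$ is the easy half: any $z\in P_{\mu,\lambda,*}$ with $\lambda(\mu)=\gamma\in\Gamma\setminus\Gamma_{\sqrt{\;}}$ satisfies $(T_{\mu^{-1}}-\gamma\,Id)^{n}(z)\in C_0$ by evaluating the generalized-commutator condition at $g=\mu$, so $z\in P_{\mu,\gamma,n}\subseteq Y_{*}$. For the reverse inclusion I would decompose each generating piece $P_{\mu,\gamma,*}$ of $Y_{*}$ as in part (b). The one point needing care, and the main obstacle, is to confirm that each resulting generalized-commutator summand $V_i$ lands in $Y_{G*}$ rather than merely in $P_{*,G*,*}$, which requires $\lambda_i(\mu)=\gamma$. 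When $\gamma$ is not a root of unity this is precisely the clause of Lemma \ref{xxlem3.7}(c) asserting that every summand preserves the weight commutator $\omega(y)=(\mu,\gamma)$ (applicable since $P_{\mu,\gamma,*}=P_{\mu,\gamma,1}$). When $\gamma=1$ it follows from the unipotence of $T_{\mu^{-1}}$ on $P_{\mu,1,*}$: the operator fixes $k(\mu-1)$, and $(T_{\mu^{-1}}-Id)^{N}$ maps $P_{\mu,1,*}$ into $C_0\cap P_{\mu,1,*}=k(\mu-1)$ for large $N$, so $1$ is its only eigenvalue and each $\lambda_i(\mu)=1$. With $\lambda_i(\mu)=\gamma\in\Gamma\setminus\Gamma_{\sqrt{\;}}$ in hand, each $V_i\subseteq P_{\mu,\lambda_i,*}\subseteq Y_{G*}$, giving $Y_{*}\subseteq Y_{G*}$ and hence the desired equality.
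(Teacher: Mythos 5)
Your proposal is correct and follows essentially the same route as the paper: part (a) from Theorem \ref{xxthm3.10} via $P_{\mu,1,*}\subseteq Y_*$ and $P_{\mu,1,*}\cap C_0=k(\mu-1)$, part (b) from Lemma \ref{xxlem3.8}(e) together with $G_0$-stability and Lemma \ref{xxlem3.7}(b,c), and part (c) by identifying $P_{\mu,\lambda,*}$ with $P_{\mu,\lambda(\mu),*}$. You merely supply details the paper's terse proof leaves implicit --- the level-reduction showing $1\in\Gamma\setminus\Gamma_{\sqrt{\;}}$, the commuting-conjugations check of $G_0$-stability, and the unipotence argument forcing $\lambda_i(\mu)=1$ in the $\gamma=1$ case of (c) --- all of which are sound.
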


By Lemma \ref{xxlem3.8}(c,e), $P_{\mu,\gamma,*}$ is finite dimensional.

\begin{proof}[Proof of Corollary \ref{xxcor3.11}] 
(a) By Theorem \ref{xxthm3.10}, $Y_*/(Y_*\cap C_0)$
is finite dimensional. Since $P_{\mu,1,*}\subset Y_*$ and
$P_{\mu,1,*}\cap C_0= k(\mu-1)$, we have
$$\dim P_{\mu,1,*}/k(\mu-1)\leq \dim Y_*/(Y_*\cap C_0)<\infty$$
which implies that $P_{\mu,1,*}$ is finite dimensional.

(b) By Lemma \ref{xxlem3.8}(e) $P_{\mu,\gamma,*}$ is a subspace
of $P_{*,G*,*}$.

By part (a) $P_{\mu,1,*}$ is finite dimensional. It is clear
that $P_{\mu,1,*}$ is $G_0$-stable. By Lemma \ref{xxlem3.7}(b,c)
$P_{\mu,1,*}$ is a subspace of $P_{*,G*,*}$.

(c) As a consequence of part (b), $P_{\mu, \lambda, *}=
P_{\mu,\lambda(\mu),*}$ when $\lambda(\mu)$ is either 1 or not a
root of unity. The assertion follows.
\end{proof}

Theorem \ref{xxthm0.4} is an immediate consequence of Theorem
\ref{xxthm3.10}.

\begin{proof}[Proof of Theorem \ref{xxthm0.4}]
Without loss of generality we assume that $\GKdim H<\infty$.
First we claim that $Y_*\cap (Y_{\sqrt{\;}}+C_0)\subset C_0$.
Suppose $y=z+c$ is in $Y_*\cap (Y_{\sqrt{\;}}+C_0)$ where 
$y\in Y_*\setminus C_0$ and $z\in Y_{\sqrt{\;}}$ and $c\in C_0$. 
It is easily reduced to the case when $\mu(y)=\mu(z)=g$ and 
$c=\alpha(g-1)$ for some $\alpha\in k$. Then $y\in Y_{\sqrt{\;}}$,
a contradiction. Therefore the claim holds. 

By Lemma \ref{xxlem3.7}(a), every skew primitive
element is a linear combination of skew primitives with commutator
of finite level. This says that $Z=Y_{*}+Y_{\sqrt{\;}}+C_0$. Since
$Y_*\cap (Y_{\sqrt{\;}}+C_0)\subset C_0$, we have
$$Z/(C_0+Y_{\sqrt{\;}})\cong Y_*/Y_*\cap (C_0+Y_{\sqrt{\;}})=
Y_*/(Y_*\cap C_0).$$
The assertion follows by Theorem \ref{xxthm3.10}.
\end{proof}

Another way of proving Theorem \ref{xxthm0.3} (with a slightly 
stronger hypothesis that $\langle W_{\times}\rangle$ is abelian) 
is using Theorem \ref{xxthm0.4} and the following lemma, which 
is due to an anonymous referee.

\begin{lemma}
\label{xxlem3.12} Suppose $\GKdim H<\infty$. Then
$$\dim Z/(C_0+Y_{\sqrt{\;}})\geq \# (W\setminus W_{\sqrt{\;}}).$$
\end{lemma}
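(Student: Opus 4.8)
The plan is to produce, for each weight in $W\setminus W_{\sqrt{\;}}$, a single skew primitive element that lands in the space $Y_*$ appearing in Theorem \ref{xxthm3.10}, and then to show that these elements are linearly independent modulo $C_0+Y_{\sqrt{\;}}$. Since $\GKdim H<\infty$ is assumed, I may freely invoke Lemmas \ref{xxlem2.1} and \ref{xxlem2.5}. I would fix a finite subset $\{\mu_1,\dots,\mu_w\}\subseteq W\setminus W_{\sqrt{\;}}$ and exhibit $w$ elements of $Z$ that are independent modulo $C_0+Y_{\sqrt{\;}}$, so that $\dim Z/(C_0+Y_{\sqrt{\;}})\geq w$; letting $w\to\infty$ then covers the case where $W\setminus W_{\sqrt{\;}}$ is infinite.

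First I would construct the elements. For each $i$ pick a skew primitive $y_i\notin C_0$ with $\mu(y_i)=\mu_i$. By Lemma \ref{xxlem2.5} there is a skew primitive $z_i\in H\setminus C_0$ of the same weight $\mu_i$ whose commutator $\gamma(z_i)$ of level $1$ is defined. Because $\mu_i\notin W_{\sqrt{\;}}$, the contrapositive of Lemma \ref{xxlem2.1}(b) forces $\gamma(z_i)\notin\Gamma_{\sqrt{\;}}$; as $\gamma(z_i)\in\Gamma$, this gives $\gamma(z_i)\in\Gamma\setminus\Gamma_{\sqrt{\;}}$, i.e. $\gamma(z_i)$ is either $1$ or not a root of unity. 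Thus each $z_i$ is a skew primitive with commutator of finite level and $\gamma(z_i)\in\Gamma\setminus\Gamma_{\sqrt{\;}}$, so $z_i\in Y_*$ by the definition of $Y_*$.

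The core computation is linear independence modulo $C_0$. Suppose $\sum_i a_i z_i=c\in C_0$ for scalars $a_i\in k$. Applying $\Delta$ and using $\Delta(z_i)=z_i\otimes 1+\mu_i\otimes z_i$ yields $\sum_i a_i\,\mu_i\otimes z_i=\Delta(c)-c\otimes 1\in C_0\otimes C_0$, since $C_0$ is a subcoalgebra. The weights $\mu_1,\dots,\mu_w$ are distinct group-like elements, hence linearly independent in $C_0$; extending them to a $k$-basis of $C_0$ and comparing components in the left tensor factor shows $a_i z_i\in C_0$ for every $i$. As $z_i\notin C_0$, every $a_i=0$.

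Finally I would upgrade this to independence modulo $C_0+Y_{\sqrt{\;}}$. If $\sum_i a_i z_i\in C_0+Y_{\sqrt{\;}}$, then, because each $z_i\in Y_*$, the sum lies in $Y_*\cap(C_0+Y_{\sqrt{\;}})$, which is contained in $C_0$ by the argument in the proof of Theorem \ref{xxthm0.4}. The previous paragraph then forces all $a_i=0$, so the images of $z_1,\dots,z_w$ in $Z/(C_0+Y_{\sqrt{\;}})$ are linearly independent and $\dim Z/(C_0+Y_{\sqrt{\;}})\geq w$. The main obstacle is precisely this last reduction: the easy coproduct argument yields independence only over $C_0$, and it is the containment $Y_*\cap(C_0+Y_{\sqrt{\;}})\subseteq C_0$ (established while proving Theorem \ref{xxthm0.4}) that absorbs the $Y_{\sqrt{\;}}$ part, allowing the scalar attached to each distinct weight to be isolated exactly as in the group-like computation above.
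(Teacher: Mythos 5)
Your proof is correct, and it follows the paper's construction exactly up to the crux: like the paper, you pick distinct weights in $W\setminus W_{\sqrt{\;}}$, use Lemma \ref{xxlem2.5} to replace each representative by a skew primitive $z_i$ whose level-one commutator $\gamma(z_i)$ is defined, and use (the contrapositive of) Lemma \ref{xxlem2.1}(b) to see that $\gamma(z_i)$ is $1$ or not a root of unity, hence $z_i\in Y_*$. Where you genuinely diverge is the independence step modulo $C_0+Y_{\sqrt{\;}}$. The paper proves that $z_1,\dots,z_w,y$ are linearly independent in $Z/C_0$ for every $y\in Y_{\sqrt{\;}}\setminus C_0$: it decomposes $y$ into skew primitives whose finite-level commutators are nontrivial roots of unity, notes that the weight--commutator pairs of the combined family $z_1,\dots,z_w,y_1,\dots,y_v$ are pairwise distinct, and then appeals to an ``improved version of Lemma \ref{xxlem2.3}'' for commutators of finite level, which it does not actually prove. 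You instead absorb the $Y_{\sqrt{\;}}$-part wholesale via the containment $Y_*\cap(C_0+Y_{\sqrt{\;}})\subseteq C_0$, which is the opening claim in the proof of Theorem \ref{xxthm0.4}; this reduces the problem to independence of $z_1,\dots,z_w$ modulo $C_0$ alone, and since your weights are distinct group-like elements, the direct coproduct computation (essentially the first half of the proof of Lemma \ref{xxlem2.3}, simplified by distinctness of the weights) settles it without any minimality argument. What your route buys: it bypasses the unproved improved Lemma \ref{xxlem2.3}, resting instead on steps the paper does justify. What it costs: you import a claim embedded in the proof of Theorem \ref{xxthm0.4}, which in Section 3 sits under the standing assumption that $\langle W_{\times}\rangle$ is abelian, whereas Lemma \ref{xxlem3.12} assumes only $\GKdim H<\infty$; fortunately the claim's (terse) proof --- weight decomposition plus the generalized-eigenspace behaviour of $T_{g^{-1}}$ on a single weight space, where eigenvalues in $\{1\}\cup\{\text{non-roots of unity}\}$ and nontrivial roots of unity can only meet in $k(g-1)$ --- uses neither abelianness nor anything beyond finiteness of $\GKdim H$, so the citation is sound for the lemma exactly as stated, though a careful write-up should say so explicitly.
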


\begin{proof} For any $w\leq \# (W\setminus W_{\sqrt{\;}})$,  
take distinct elements $x_1,\cdots,x_w \in W\setminus W_{\sqrt{\;}}$, 
we need to prove that $\dim Z/(C_0+ Y_{\sqrt{\;}})\geq w$.

For each $i$, pick a skew primitive element $z_i\in H\setminus C_0$ 
such that $x_i=\mu(z_i)$. Lemma \ref{xxlem2.5} shows how to get a 
suitable $z_i$ such that $\gamma(z_i)$ is defined. Since $\mu(z_i)=
x_i\not\in W_{\sqrt{\;}}$, Lemma \ref{xxlem2.1}(b) says that 
$\gamma(z_i)$ is either 1 or not a root of unity. 

It suffices to show that $z_1,\cdots,z_w$ are linearly independent 
in $Z/(C_0+Y_{\sqrt{\;}})$, so it is enough to show that 
$z_1,\cdots,z_w,y$ are linearly independent in $Z/C_0$ for any 
$y\in Y_{\sqrt{\;}}\setminus C_0$. We can arrange $y = y_1+\cdots+
y_v$ for some skew primitive elements $y_j\not\in C_0$, 
where $y_j$ has a commutator $\gamma(y_j)$ of finite level which is 
a nontrivial root of unity, and the pairs $(\mu(y_j),\gamma(y_j))$ 
are distinct. The pairs $(\mu(z_i),\gamma(z_i))$ are already distinct, 
and $(\mu(z_i),\gamma(z_i))\neq (\mu(y_j),\gamma(y_j))$ for all 
$i,j$ because $\gamma(z_i)$ is either 1 or not a root of unity.

An improved version of Lemma \ref{xxlem2.3} for skew primitive elements 
with commutators of finite level says that $z_1,\cdots, z_w,y_1, 
\cdots,y_v$ are linearly independent in $Z/C_0$. Consequently,
$z_1,\cdots,z_w, y$ are linearly independent in $Z/C_0$ as desired.
\end{proof}

Finally we end with a simple example in which $Z=Y_{\sqrt{\;}}+C_0$.

\begin{example}
\label{xxex3.13}
Let $H$ be generated as an algebra by $x$ and $\{y_i\}_{i=1}^{\infty}$ 
subject to the relations
$$\begin{aligned}
x^2&=1,\\
y_i^2&=0,\\
y_iy_j+y_jy_i&=0,\\
xy_i+y_ix&=0
\end{aligned}
$$
for all $i,j\in {\mathbb N}$. The coalgebra structure and the antipode
of $H$ are determined by
$$\begin{aligned}
\Delta(x)=x\otimes x, \qquad & \Delta(y_i)=y_i\otimes 1+x\otimes y_i,\\
\epsilon(x)=1, \qquad & \epsilon(y_i)=0,\\
S(x)=x,  \qquad & S(y_i)=-x y_i=y_i x
\end{aligned}
$$
for all $i$. It is easy to check the following
\begin{enumerate}
\item
$\GKdim H=0$,
\item
$\Omega=\{(x,-1)\}=\Omega_{\sqrt{\;}}$,
\item
$Z= Y_{\sqrt{\;}}+C_0$ and $\dim Y_{\sqrt{\;}}=\infty$,
\item
$P_{x,-1,*}=P_{*,-1,1}=Y_{\sqrt{\;}}$.
\end{enumerate}
\end{example}


\subsection*{Acknowledgments}
The authors thank Ken Goodearl and Ken Brown for their valuable 
comments and thank the referee for his/her careful reading, many 
comments, corrections, suggestions and Lemma \ref{xxlem3.12}.
A part of this research was done when J.J. Zhang was visiting Fudan
University in the Fall of 2009 and the Spring of 2010.
D.-G. Wang was supported by the National Natural Science Foundation 
of China (No. 10671016  and  11171183) and the Shandong Provincial 
Natural Science Foundation of China (No. ZR2011AM013).
J.J. Zhang and G. Zhuang were supported by the US National
Science Foundation (NSF grant No. DMS 0855743). 

\bigskip

\providecommand{\bysame}{\leavevmode\hbox to3em{\hrulefill}\thinspace}
\providecommand{\MR}{\relax\ifhmode\unskip\space\fi MR }
\providecommand{\MRhref}[2]{%

\href{http://www.ams.org/mathscinet-getitem?mr=#1}{#2} }
\providecommand{\href}[2]{#2}


\begin{thebibliography}{10}




\bibitem[AA]{AA}
N. Andruskiewitsch and I.E. Angiono,
{\em On Nichols algebras with generic braiding},
pp. 47--64 in ``Modules and comodules'' (Porto, Portugal, 2006),
edited by T. Brzezi{\' n}ski et al., Birkh{\" a}user, Basel, 2008.

\bibitem[AS1]{AS1}
N. Andruskiewitsch and H.-J. Schneider,
{\em Pointed Hopf Algebras}, in: Recent developments in
Hopf algebra Theory, MSRI Publications {\bf 43} (2002), 168,
Cambridge Univ. Press.


\bibitem[AS2]{AS2} \bysame,
{\em A characterization of quantum groups},
J. reine angew. Math.
{\bf 577} (2004), 81--104.










\bibitem[Be]{Be}
G.M. Bergman,
\emph{The diamond lemma for ring theory},
Adv. in Math. \textbf{29} (2) (1978) 178--218.

\bibitem[Br1]{Br1}
K. A. Brown,
Noetherian Hopf algebras. Turkish J. Math. {\bf 31} (2007),
suppl., 7--23.

\bibitem[Br2]{Br2}
\bysame,
\emph{Representation theory of Noetherian Hopf algebras satisfying a
polynomial identity}, in Trends in the Representation Theory of
Finite Dimensional Algebras (Seattle 1997), (E.L. Green and B.
Huisgen-Zimmermann, eds.), Contemp.  Math. \textbf{229}
(1998), 49--79.

\bibitem[BG1]{BG1}
K. A. Brown and K. R. Goodearl,
Homological aspects of Noetherian PI Hopf algebras and
irreducible modules of maximal dimension,
J. Algebra {\bf 198} (1997), 240--265.

\bibitem[BG2]{BG2}
\bysame,
Lectures on Algebraic Quantum Groups, Birkh{\"a}user, 2002.

\bibitem[BZ]{BZ}
K.A. Brown and J.J. Zhang,
Prime regular Hopf algebras of GK-dimension one,
Proc. London Math. Soc. (3) {\bf 101} (2010) 260--302.







\bibitem[GZ1]{GZ1}
K.R. Goodearl and J.J. Zhang,
Homological properties of quantized coordinate rings of semisimple
groups, Proc. Lond. Math. Soc. (3) {\bf 94} (2007), no. 3, 647--671.

\bibitem[GZ2]{GZ2}
\bysame,
Noetherian Hopf algebra domains of Gelfand-Kirillov
dimension two, J. of Algebra, {\bf 324} (2010), Special Issue in
Honor of Susan Montgomery, 3131-3168.

\bibitem[Gr]{Gr}
M. Gromov,
Groups of polynomial growth and expanding maps.
Inst. Hautes {\'E}tudes Sci. Publ. Math. No. {\bf 53} (1981), 53--73.




\bibitem[Kh]{Kh}
V.K. Kharchenko,
A quantum analogue of the Poincar{\'e}-Birkhoff-Witt theorem.
(Russian. Russian summary) Algebra Log. {\bf 38} (1999), no. 4,
476--507, 509; translation in Algebra and Logic {\bf 38} (1999),
no. 4, 259--276.


\bibitem[KL]{KL}
G.R. Krause and T.H. Lenagan,
Growth of algebras and Gelfand-Kirillov dimension,
Revised edition. Graduate Studies in Mathematics, {\bf 22}.
AMS, Providence, RI, 2000.



\bibitem[LWZ]{LWZ}
D.-M. Lu, Q.-S. Wu and J.J. Zhang,
Homological integral of Hopf algebras,
Trans. Amer. Math. Soc. {\bf 359} (2007), no. 10, 4945--4975.


\bibitem[MR]{MR}
J. C. McConnell and J. C . Robson, Noncommutative Noetherian Rings,
Wiley, Chichester, 1987.




\bibitem[Mo]{Mo}
S. Montgomery, Hopf Algebras and their Actions on Rings, CBMS
Regional Conference Series in Mathematics, {\bf 82}, Providence, RI,
1993.




\bibitem[SSW]{SSW}
L. W. Small, J.T. Stafford and R. B. Warfield Jr.,
Affine algebras of Gelfand-Kirillov dimension one are PI,
Math. Proc. Cambridge Philos. Soc.
{\bf 97} (1985), no. 3, 407--414.















\bibitem[WuZ1]{WuZ1}
Q.-S. Wu and J.J. Zhang,
Noetherian PI Hopf algebras are Gorenstein,
Trans. Amer. Math. Soc.
{\bf 355} (2003), no. 3, 1043--1066.

\bibitem[WuZ2]{WuZ2}
\bysame,
Regularity of involutary PI Hopf algebras,
J. Algebra {\bf 256} (2002),no.2,599-610.



\bibitem[Zhu]{Zhu}
G. Zhuang,
Existence of Hopf subalgebras of GK-dimension two,
J. Pure Appl. Algebra {\bf 215} (2011), 2912-2922.

\end{thebibliography}
\end{document}